\renewcommand{\Re}{\operatorname{Re}}
\renewcommand{\Im}{\operatorname{Im}}
\def\R{\ensuremath\mathbb{R}}
\def\C{\ensuremath\mathbb{C}}
\def\AA{\ensuremath\mathcal{A}}
\def\Z{\ensuremath\mathbb{Z}}
\def\Q{\ensuremath\mathbb{Q}}
\def\N{\ensuremath\mathbb{N}}
\def\Hb{\ensuremath\mathbb{H}}
\def\F{\ensuremath\mathbb{F}}
\def\tr{\ensuremath\mathrm{tr}}
\def\min{\ensuremath\mathrm{min}}
\def\Conj{\ensuremath\mathrm{Conj}}
\newtheorem{thm}{Theorem}[section]
\newtheorem{defi}[thm]{Definition}
\newtheorem{cor}[thm]{Corollary}
\newtheorem{lemma}[thm]{Lemma}
\newtheorem{conj}[thm]{Conjecture}
\newtheorem{prop}[thm]{Proposition}
\theoremstyle{remark}
\newtheorem{remark}{Remark}[section]
\def\eps{\ensuremath\varepsilon}
\def\0{\emptyset}
\def\Cl{\text{\rm Cl}}
\def\sgn{\mathrm{sgn}}
\def\sym{\mathrm{sym}}
\def\SO{\hbox{\rm SO}}
\def\SL{\hbox{\rm SL}}
\def\PSL{\mathrm{PSL}}
\def\CC{\mathcal{C}}
\def\BB{\mathcal{B}}
\def\LL{\mathcal{L}}
\def\cc{\mathbf{c}}
\def\modulo{\text{ \rm mod }}
\def\pmod{\text{ \rm mod }}
\numberwithin{equation}{section}
\numberwithin{equation}{section}
\begin{document}
\title{Equidistribution of $q$-orbits of closed geodesics}
\author{Asbj\o rn Christian Nordentoft}

\address{University of Copenhagen, Department of Mathematical Sciences, Universitetsparken 5, 2100 Copenhagen \O, Denmark}

\email{\href{mailto:nordentoft@math.ku.dk}{nordentoft@math.ku.dk}}

\date{\today}
\subjclass[2010]{11F67(primary)}
\maketitle
\begin{abstract}
Given an element of $(\Z/q\Z)^\times$ we associate a closed geodesic on the modular surface. We prove that the closed geodesics associated to cosets of sufficiently large subgroups equidistribute in the unit tangent bundle as $q$ tends to infinity. This is a $q$-orbit analogue of Duke's theorem for real quadratic fields as extended to subgroups by Popa. Finally, we show that the homology classes of the $q$-orbits of oriented closed geodesics concentrate around the Eisenstein line and present group theoretic applications.% thereof. % concentration result for the homology classes of the $q$-orbits of oriented closed geodesics. The proofs proceed by Weyl's Criterion; a delicate geometric estimate together with input about the fine scale distribution of multiplicative inverses modulo $q$ reduce the result via the Birch--Stevens formula to a subconvexity bound for twisted $L$-functions due to Blomer--Harcos.          
\end{abstract}
\section{Introduction}
%{\color{red}-Equidistribution of geodesics on $X_0(N)$+level aspect, -equidistribution in homology, -equidistribution for individual geodesics using Mazur--Rubin.  and simultaneous equidistribution, and mixing conjecture (maybe in separate paper, or at least wait until discussed with Blomer--Michel).}
%Let $\Gamma_0(N)$ be the Hecke congruence group of level $N\geq 1$ considered as a subgroup of $\PSL_2(\Z)$ and consider the modular surface of level $N$  
%$$Y_0(N):=\Gamma_0(N)\backslash \Hb,$$ 
 %which is naturally a non-compact Riemannian orbifold with line and volume elements given by $ds(z)=\tfrac{|dz|}{\Im z}$ and $d\mu(z)=\tfrac{-dzd\overline{z}}{2i(\Im z)^2}$ (in terms of the standard coordinate $z\in \Hb$). The unit tangent bundle admits a natural description as a homogeneous space;
The modular surface $Y_0(1):=\PSL_2(\Z)\backslash \Hb$ is naturally endowed with the structure of a non-compact Riemannian orbifold with line and volume elements given by $ds=\tfrac{|dz|}{\Im z}$ and $d\mu=\tfrac{-dzd\overline{z}}{2i(\Im z)^2}$ in terms of the standard coordinate $z\in \Hb$. The unit tangent bundle admits a natural description as a homogeneous space
$$\mathbf{T}^1(Y_0(1))=\PSL_2(\Z)\backslash \PSL_2(\R),$$ 
which we equip with the Haar probability measure $dg$. The distribution of closed geodesics on the modular surface is a classical and rich topic starting with work of Linnik \cite{Li68} and Skubenko \cite{Skubenko62} who proved equidistribution in the unit tangent bundle for individual length packets of closed geodesics under certain congruence conditions. These congruence conditions were later removed in the celebrated work of Duke \cite{Du88}. More precisely, let $D>0$ be a positive discriminant with narrow class group $\Cl_D^+$. Given a narrow ideal class $B\in \Cl_D^+$ there is an associated oriented closed geodesic $\CC_B\subset \mathbf{T}^1(Y_0(1))$ in the unit tangent bundle of the modular surface with geodesic length $|\CC_B|=2\log \epsilon_D$ where $\epsilon_D$ denotes the fundamental positive unit of discriminant $D$, see \cite{Sarnak82}. The closed geodesic $\CC_B$ is a closed $A$-orbit where $A$ denotes the diagonal subgroup of $\PSL_2(\R)$. We equip $\CC_B$ with the unique $A$-invariant measure $\mu_B$ which descends to $ds$ on $Y_0(1)$. Duke's theorem states that for any smooth and compactly supported $\phi: \mathbf{T}^1(Y_0(1))\rightarrow \C$,
$$ \frac{\sum_{B\in \Cl_D^+}\int_{\CC_B} \phi (t) d\mu_B(t)  }{ 2|\Cl_D^+|\log \epsilon_D}\rightarrow \int_{\mathbf{T}^1(Y_0(1))} \phi (g) dg,\quad \text{as }D\rightarrow \infty.$$
%We remark that Einsiedler--Lindenstrauss--Michel--Venkatesh \cite{EinLindMichVenk12} have given an alternative purely ergodic proof. Note that for the $q$-orbits of closed geodesics in (\ref{eq:qorbit}), each geodesic length is attained at most twice (as follows by looking at the traces). Our results seem to be the first equidistribution result for packets of closed geodesics of different lengths.
It is natural to ask to what extent other collections of closed geodesics equidistribute. This is a special case of the \emph{Basic Question} posed in \cite[Sec. 1.1]{ELMV09}. In this context, Michel and Venkatesh in their ICM address \cite{MichelVenk06} put forth a number of ``sparse'' equidistribution conjectures refining Duke's theorem. Progress towards these conjectures include the works of Harcos--Michel \cite{HarcosMichel06} and Popa \cite[Theorem 6.5.1]{Popa06} obtaining equidistribution for closed geodesics associated to cosets of sufficiently large subgroups of class groups $\Cl_D^+$ (of index $\leq D^\delta$ for some $\delta>0$) and the work of Einsiedler--Lindenstrauss--Michel--Venkatesh \cite{EinLindMichVenk12} obtaining equidistribution of sufficiently large subcollections (of index $\leq D^{o(1)}$) using ergodic techniques, see also \cite{AkaEinsiedler16}. The Mixing Conjecture \cite[Conjecture 2]{MichelVenk06} has also seen recent progress, see \cite{Khayutin19}, \cite{BlomerBrumleyKhayutin22}, \cite{BlomerMichelUnipotentMixing}. In a different direction, Erlandsson--Souto \cite{ErlandssonSouto22} proved equidistribution for the special set of reciprocal geodesics answering a question of Sarnak \cite{Sarnak07}. In the other extreme, Bourgain--Kontorovich \cite{BourgainKontorovich17} showed the existence of many ($\gg_\eps D^{1/2-\eps}$ for any $\eps>0$) low-lying closed fundamental geodesics which in particular do not equidistribute. 

In this paper, we introduce {\lq\lq}$q$-orbits{\rq\rq} of closed geodesics in analogy with the imaginary quadratic case of Heegner points (see Remark \ref{rem:qorbit} below). We obtain an analogue of Duke's theorem (and it's extension to subgroups due to Harcos--Michel \cite{HarcosMichel06} and Popa \cite{Popa06}), as well as concentration results for the homology classes of $q$-orbits of closed geodesics as studied in the setting of Duke's theorem by the author \cite{Nor22}. In view of the \emph{Basic Question} \cite[Sec.\ 1.1]{ELMV09}, our results can be interpreted as saying that the $q$-orbit structure on the closed geodesics does not conspire with the low-lying geodesics. The key technical result proved in this paper, Proposition \ref{prop:main}, is a quantitative relation between geodesic periods and vertical periods of automorphic forms, which might be of independent interest. %In particular, our results seem to be the first instances of equidistribution results for packets of geodesics all of different lengths.%belonging to different packets of the length spectrum. %(and its extensions by e.g. Harcos--Michel \cite[Theorem 2]{HarcosMichel06} and Popa \cite[Theorem 6.5.1]{Popa06})A modern interpretation of Duke's original proof is to use Waldspurger's formula to reduce the equidistribution to a subconvexity bound for certain Rankin--Selberg $L$-functions. In this paper we show that 
%\section{Introduction v2}
%The oriented closed geodesics on the modular surface $X=\PSL_2(\Z)\backslash \Hb$ are parametrized by indefinite, integral, primitive, binary quadratic forms of non-square discriminants. The equidistribution of the closed geodesics as the discriminant tends to infinity is a classical topic starting with work of Linnik \cite{Li68} as extended by Skubenko \cite{Skubenko62} culminating with the celebrated result of Duke \cite{Du88} (see \cite{EinLindMichVenk12} for a complete history and a purely ergodic proof). The definite analogue of Duke's Theorem corresponds to the equidistribution of Heegner points on $X$. In the special case where the discriminant is of the form $-4p^2$ with $p$ prime, the Heegner points are (up to $O(1)$ points) given by equidistanced points on the horocycle at height $p$;
%$$\{\tfrac{a+i}{p}: 0<a<p\},$$
%which are known to equidistribute when projected to $X$. In other words we have a map $(\Z/p\Z)^\times \rightarrow X$ and we call the image a $p$-orbit of points. In this paper we give an indefinite analogue of this setup by defining a $q$-orbit of closed geodesics and showing equidistribution in the unit tangent bundle.       

\subsection{Equidistribution of $q$-orbits of closed geodesics}
To state our results, denote by $\Gamma_0(N)\subset \PSL_2(\Z)$ the Hecke congruence subgroup of level $N\geq 1$ and by $Y_0(N):=\Gamma_0(N)\backslash \Hb$ the modular surface of level $N$ equipped with the line element $ds$ and volume element $d\mu$ as above. Let $q\geq 2$ be an integer and consider an embedding (of sets) 
$$\psi: (\Z/q\Z)^\times \hookrightarrow \Gamma_0(q)\subset \PSL_2(\Z),$$
satisfying 
\begin{enumerate}
\item \label{item:1} $\psi(a)\infty\equiv \tfrac{a}{q}\modulo 1$ (i.e. $\psi(a)$ is of the shape $\begin{psmallmatrix} a & \ast \\ q & \ast \end{psmallmatrix}$),
\item \label{item:2}$|\tr(\psi(a))|>2$ (i.e. $\psi(a)$ is a hyperbolic matrix).
\end{enumerate}
Notice that for $N|q$ we have $\psi(a)\in \Gamma_0(N)$ and we denote by 
$$\CC^\psi_a(N)\subset \mathbf{T}^1(Y_0(N))=\Gamma_0(N)\backslash \PSL_2(\R),$$ 
the oriented closed geodesic associated to the conjugacy class of $\psi(a)$ inside $\Gamma_0(N)$ (see Section \ref{sec:closedgeo} for details on this correspondence). This way we obtain the following \emph{$q$-orbit of oriented closed geodesics};
\begin{equation}\label{eq:qorbit}\{\CC^\psi_a(N)\subset  \mathbf{T}^1(Y_0(N)): a\in (\Z/q\Z)^\times\},\end{equation}
by which we mean a {\lq\lq}natural family{\rq\rq} of closed geodesics parametrized by the multiplicative group $(\Z/q\Z)^\times$ (in our case the parametrization stems from the double coset decomposition, see Section \ref{sec:DCE}). We equip $\CC^\psi_a(N)$ with the unique $A$-invariant measure $\mu^\psi_a$ which descends to the line element $ds$ when $\CC^\psi_a(N)$ is projected to $Y_0(N)$, see the introduction of \cite{Zelditch92} for details. Notice that in any $q$-orbit each geodesic length is attained at most twice. 

Our first result is the following equidistribution result in the unit tangent bundle for closed geodesics associated to cosets of sufficiently large subgroups using the parametrization of the $q$-orbits in terms of $(\Z/q\Z)^\times$.
\begin{thm}\label{thm:first} Fix $N\geq 1$, $\delta>7/8$ and $M\geq 1$. For every $q\geq 2$ such that $N|q$ let
\[\psi:(\Z/q\Z)^\times \hookrightarrow \Gamma_0(N),\] 
be an embedding satisfying the conditions {\rm (\ref{item:1}), (\ref{item:2})} as well as $|\tr(\psi(a))|\leq M q$ for all $a\in (\Z/q\Z)^\times$, and let $cH\subset (\Z/q\Z)^\times$ be a coset of a subgroup $H\leq (\Z/q\Z)^\times$ of size $\geq q^{\delta}$. Then for any smooth and compactly supported function $\phi : \mathbf{T}^1(Y_0(N))\rightarrow \C$ and $\eps>0$ we have that 
\begin{equation}\label{eq:first} \frac{\sum_{a\in cH}\int_{\CC^\psi_a(N)} \phi (t) d\mu^\psi_a(t)  }{\sum_{a\in cH}|\CC^\psi_a(N)|}= \int_{\mathbf{T}^1(Y_0(N))} \phi (g) dg+O_{\phi,\eps}((\log q)^{-1+\eps}),\quad \text{as }q\rightarrow \infty.\end{equation}
Here $|\CC|$, for a curve $\CC\subset Y_0(N)$, denotes the arc length with respect to the hyperbolic line element $ds$x and $dg$ denotes the Haar probability measure on $\Gamma_0(N)\backslash \PSL_2(\R)$.
%and $\vol(B)$ denotes the volume with respect to the hyperbolic volume element $\frac{dz\overline{dz}}{(\Im z)^2}$.
%Assuming the Generalized Lindel\"{o}f Hypothesis, the same conclusion holds as long as $|H|\gg q^{1/2+\eps}$.  
\end{thm}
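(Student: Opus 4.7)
The plan is to combine a spectral decomposition on $\Gamma_0(N)\backslash\PSL_2(\R)$ with harmonic analysis on $(\Z/q\Z)^\times$ and subconvexity of automorphic $L$-functions. First, decompose $\phi = \langle \phi, 1\rangle + \phi^\perp$ where $\phi^\perp$ lies in the orthogonal complement of the constants in $L^2$. The constant part contributes exactly the right-hand side of \eqref{eq:first}, because the left-hand side is normalized by the total length $\sum_{a\in cH}|\CC^\psi_a(N)|$. Using the compact support of $\phi$ together with a Sobolev-type majorization to truncate the spectrum, the problem reduces to bounding, for each fixed non-constant Hecke-Maass form or Eisenstein series $u_j$ of level $N$, the sum $\sum_{a\in cH}\int_{\CC^\psi_a(N)}u_j\,d\mu^\psi_a$ by $O_{u_j}(|H|(\log q)^\eps)$. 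The denominator is of order $|H|\log q$, using the upper bound $|\tr(\psi(a))|\leq Mq$ and (after discarding a negligible subset) a lower bound on a positive proportion of geodesic lengths.

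Next, apply Fourier analysis on the quotient $(\Z/q\Z)^\times/H$:
$$\mathbf{1}_{cH}(a)=\frac{|H|}{\varphi(q)}\sum_{\chi\in H^\perp}\overline{\chi(c)}\,\chi(a),$$
where $H^\perp$ denotes the characters modulo $q$ trivial on $H$, of cardinality $\varphi(q)/|H|\leq q^{1/8-\eps}$. The coset sum thereby becomes a sum of the twisted $q$-orbit sums
$$S(\chi,u_j):=\sum_{a\in(\Z/q\Z)^\times}\chi(a)\int_{\CC^\psi_a(N)}u_j\,d\mu^\psi_a,\qquad \chi\in H^\perp.$$
The contribution of the trivial character $\chi=1$ is the full $q$-orbit equidistribution, treated as a base case by an analogue of Duke's theorem for the full $q$-orbit. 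For each non-trivial $\chi\in H^\perp$, I would establish a period formula of Waldspurger--Popa type, exploiting the explicit double-coset description $\psi(a)\equiv\begin{psmallmatrix}a&\ast\\q&\ast\end{psmallmatrix}$, to identify $S(\chi,u_j)$ (or, more realistically, $|S(\chi,u_j)|^2$ after summing against a spectral identity) with a twisted central $L$-value such as $L(1/2,u_j\otimes\chi)$ up to bounded archimedean and local factors. A Burgess-strength subconvexity bound $L(1/2,u_j\otimes\chi)\ll_{u_j,\eps}q^{1/2-\delta}$ in the $q$-aspect, combined with the count $|H^\perp|\leq q^{1/8-\eps}$, delivers the required saving.

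The main obstacle is the period formula itself. In the classical Duke/Popa setting, closed geodesics of a fixed discriminant form a class-group orbit, naturally tied to a single central $L$-value via Waldspurger-type identities. Here, by contrast, the $q$-orbit contains geodesics of many different discriminants, parametrized by $(\Z/q\Z)^\times$ through a double-coset decomposition rather than a class-group action, so writing $S(\chi,u_j)$ as a genuine automorphic $L$-value demands a new unfolding or orbital-integral computation that respects the multiplicative structure of $(\Z/q\Z)^\times$ and absorbs the $a$-dependent archimedean data (where the condition $|\tr(\psi(a))|\leq Mq$ is essential, since it keeps all geodesic lengths of size $O(\log q)$ and thus the archimedean transforms of bounded complexity). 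Once such a formula is in place, the exponent $7/8+\eps$ is calibrated exactly to what Burgess-type subconvexity can deliver, and the error $(\log q)^{-1+\eps}$ then emerges from dividing the $O(|H|(\log q)^\eps)$ non-constant contribution by the normalizing denominator $\asymp|H|\log q$.
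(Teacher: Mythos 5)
Your high-level roadmap (spectral decomposition, Weyl's criterion, Fourier analysis over $(\Z/q\Z)^\times/H$, subconvexity calibrated to $q^{7/8}$) matches the paper's, and you correctly isolate the crux: how to turn the twisted $q$-orbit period sum $S(\chi,u_j)$ into an $L$-value. But this is exactly where your proposal has a genuine gap, which you yourself flag as ``the main obstacle'' without closing it. You propose to look for a Waldspurger--Popa type exact period identity, possibly for $|S(\chi,u_j)|^2$, but such an identity has no obvious analogue here: the geodesics $\CC^\psi_a(N)$ in a $q$-orbit have pairwise different discriminants and are parametrized by a double coset space, not a class-group torsor, so the representation-theoretic mechanism behind Waldspurger (an adelic toral period for a fixed torus) simply does not apply. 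Saying ``I would establish a period formula of Waldspurger--Popa type'' is thus a statement of what one wishes were true, not a proof sketch.

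The paper resolves this differently, and the difference is essential. Instead of an exact period formula it proves an \emph{approximate} one: Proposition \ref{prop:main} shows, via the Fourier expansion of $f$ and careful comparison of the geodesic arc with the two vertical rays $x^\pm_a+i\R_{>0}$ emanating from the endpoints $\psi(a)^{\pm1}\infty$, that
$$\int_{\CC^\psi_a(N)}F\,d\mu^\psi_a = c_F^+L(\tfrac12,F,\tfrac aq)+c_F^-L(\tfrac12,F,-\tfrac aq)+O\Bigl(|\mathbf t_F|^3\bigl(1+(q/|\tr\psi(a)|)^{1/2+\eps}\bigr)\Bigr),$$
where $L(s,F,x)$ is the \emph{additive twist} $L$-series. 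This step has no analogue in the class-group setting and is the source of the $(\log q)^{-1+\eps}$ savings (rather than a power saving). The passage from additive twists to Dirichlet twists is then elementary: the Birch--Stevens formula (Proposition \ref{BirchStevens}), which only uses Gauss sums and Hecke multiplicativity, gives $\sum_a\overline{\chi(a)}L(\tfrac12,f,\tfrac aq)=\nu(\cdots)\tau(\overline{\chi^\ast})L(\tfrac12,f\otimes\chi^\ast)$. Only at this point does subconvexity enter, and with the Gauss sum $\cc(\chi)^{1/2}$ times the Blomer--Harcos exponent $\cc(\chi)^{3/8}$ you indeed recover the calibration $|H|\gg q^{7/8+\eps}$ you anticipated.

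Two further points you should have addressed. First, your denominator bound $\sum_{a\in cH}|\CC^\psi_a(N)|\asymp|H|\log q$ needs a real input: the lower bound rests on showing that a positive proportion of $a\in cH$ have $|\tr\psi(a)|\gg q$, which in the paper comes from fine-scale equidistribution of $(\tfrac aq,\tfrac{\bar a}q)$ on the torus over the coset $cH$, proved via Kloosterman/Sali\'e sum estimates (Corollary \ref{cor:Klooster}, Proposition \ref{prop:smalltrace}). Second, the error term in Proposition \ref{prop:main} blows up when $|\tr\psi(a)|$ is small; the paper handles this by first reducing to $\eps$-regular embeddings (Lemma \ref{lem:regular}), replacing $\psi(a)$ by $\psi(a)T$ on the small-trace set. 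Your sketch gestures at ``discarding a negligible subset'' but does not supply the mechanism.
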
 
\begin{remark}\label{rem:qorbit}The study of $q$-orbits (or $p$-orbits if $q=p$ is prime) is a very natural one with the most elementary example being the equidistribution of residues and their inverses on the torus;
\begin{equation}\{(\tfrac{a}{q},\tfrac{\overline{a}}{q}):a\in (\Z/q\Z)^\times\}\subset (\R/\Z)^2.\end{equation}
We refer to \cite{Humphries22} for sparse equidistribution results in this setting. Another classical example is the equidistribution of Hecke orbits (see e.g. \cite[Section 2.3]{FouvryKowMich15}). To explain this we observe that for a prime $p$ we have a natural parametrization of the Hecke operator $T_p$ in terms of $ \mathbf{P}^1(\F_p)$ given by
$$(T_pf)(g)= \frac{1}{p^{1/2}} \sum_{t\in  \mathbf{P}^1(\F_p)} f(\gamma_t g), \quad f:\PSL_2(\Z)\backslash \PSL_2(\R)\rightarrow \C,$$
where 
$$ \gamma_\infty=\begin{psmallmatrix}p & 0\\ 0 & 1 \end{psmallmatrix},\quad \gamma_t= \begin{psmallmatrix}1 &t\\ 0 & p \end{psmallmatrix},\,\, t\in \F_p. $$
The collection of points 
\begin{equation}\label{eq:Heckeorbit} \{\gamma_t i: t\in\mathbf{P}^1(\F_p)\}\subset \PSL_2(\Z)\backslash \Hb, \end{equation}
we define as the \emph{$p$-orbit} of the point $i\in \Hb$. Note that all but one of the points (\ref{eq:Heckeorbit}) lie on the horocycle $\{x+\tfrac{i}{p}: 0\leq x\leq 1\}$. Any non-trivial bound for the Hecke eigenvalues implies the well-known equidistribution of $p$-orbits; the probability measures with point masses at the points (\ref{eq:Heckeorbit}) converge weakly to the normalized hyperbolic measure $\tfrac{3}{\pi}\tfrac{dxdy}{y^2}$ as $p\rightarrow \infty$. This is in fact closely related to the situation studied in the present paper; Theorem \ref{thm:first} in the case where $H=(\Z/q\Z)^\times$ is the full multiplicative group reduces (morally at least) to a non-trivial bound of Hecke eigevalues. More precisely, the Weyl sums in our setting are (essentially) obtained by integrating the Maa{\ss} forms along  $\{\gamma_t i\R: t\in\mathbf{P}^1(\F_p)\}$ (see Remark \ref{rem:reduce}).
   
Recall \cite[Section 4.2]{Sh94} that the modular surface $\PSL_2(\Z)\backslash \Hb$ can be viewed as the moduli space for elliptic curves over $\C$ and that elliptic curves with complex multiplication (CM) correspond exactly to the \emph{Heegner points} as defined in e.g.\ \cite[Section 2.2]{Darmon06}. From this perspective, we note as in \cite[Remark 1.5]{BlomerMichelUnipotentMixing} that the collection of points (\ref{eq:Heckeorbit}) coincides, after removing $\gamma_t i$ for $t^2\equiv -1\modulo p$, with the set of Heegner points associated to the non-fundamental discriminant $-4p^2$. One can now study all sorts of sparse equidistribution questions using the group structure on $\F_p^\times \subset \mathbf{P}^1(\F_p)$ (i.e.\ using the $p$-orbit structure). Notice however that this group structure does not seem to admit a natural description in terms of moduli of CM elliptic curves since $\gamma_t i$ and $\gamma_{-t^{-1}} i$ parametrize the same elliptic curve but in general $\gamma_{at} i$ and $\gamma_{-at^{-1}} i$ with $a\in \F_p^\times$ do not.  Fouvry--Kowalski--Michel \cite[Section 2.3]{FouvryKowMich15} studied the equidistribution problem of $p$-orbits with certain algebraic weights (trace functions) associated to each of the Hecke points using the parametrization in terms of $\mathbf{P}^1(\F_p)$. An example being the indicator function of some coset of a subgroup of $\F_p^\times$ of sufficiently small index which is a direct analogue of Theorem \ref{thm:first}. Recently Blomer and Michel \cite{BlomerMichelUnipotentMixing} resolved the analogue of the Mixing Conjecture of Michel--Venkatesh \cite{MichelVenk06} for $p$-orbits (unipotent mixing). It would be interesting to see if their methods extends to the setting of this paper. \end{remark} %We note that we are able to work with completely general $N,q$ (with $N|q$) which is not the case in the two mentioned works.
%\begin{remark}
%A natural question is whether one can obtain a level version or shrinking version as in Young's work. This can in fact be obtained but only for logarithmic rates due to a subtle error-term that does not arise in the setting of Duke's Theorem. 
%\end{remark}
\subsection{The homology classes associated to closed geodesics}    
As investigated in \cite{Nor22} there is another natural distribution problem that one can study associated to oriented closed geodesics, namely the distribution of their homology classes. To set this up, let 
$$V_N:=H_1(Y_0(N),\R),$$ 
denote the singular homology group with $\R$-coefficients of the (open) modular surface of level $N$. Given an oriented closed curve $\CC\subset Y_0(N)$ we denote by 
$$[\CC]\in H_1(Y_0(N),\Z)\subset V_N,$$ 
the associated homology class. Given $q\geq 2$ such that $N|q$ and an embedding 
$$\psi:(\Z/q\Z)^\times \hookrightarrow \Gamma_0(q),$$ 
satisfying the conditions (\ref{item:1}), (\ref{item:2}), we get a map
$$(\Z/q\Z)^\times\rightarrow V_N,\quad  a\mapsto [\CC_a^\psi(N)].$$
This map will depend very subtly on the sign of the traces of the image of $\psi$ and thus we will furthermore assume that the absolute trace is \emph{minimal} (see equation (\ref{eq:ta}) and the discussion after Corollary \ref{cor:mainhomo} for further details);
\begin{enumerate}
 \item[(3)]\label{item:3}  the absolute trace $|\tr\, \psi(a)|$ is minimal\footnote{For $a\in (\Z/q\Z)^\times$ such that $a+a^{-1}\not\in \{-2,-1,0,1,2\}\modulo q$ this means exactly that $|\tr\, \psi(a)|\leq \frac{q}{2}$.} subject to the conditions (\ref{item:1}) and (\ref{item:2}).%satisfy . 
 \end{enumerate} 
%which determines the conjugacy classes of the image of $\psi$ uniquely. 
%We notice that this choice furthermore satisfies the following nice properties (this fails if )
%$$\left\{\psi(-a)\right\}_{\Gamma_0(N)}=\left\{\begin{psmallmatrix} -1& 0\\ 0 & 1\end{psmallmatrix} \psi(a)\begin{psmallmatrix} -1& 0\\ 0 & 1\end{psmallmatrix}\right\}_{\Gamma_0(N)},\quad \left\{\psi(-a^{-1})\right\}_{\Gamma_0(N)}=\left\{\psi(a)^{-1}\right\}_{\Gamma_0(N)}.$$        
%In this case we simply write $\CC_a(N)$ for the associated geodesics. 
In order to understand the distribution inside the vector space $V_N$ we will have to normalize our classes. A convenient way to do this is to use the natural projection map  
$$V_N-\{0\}\twoheadrightarrow  \mathbf{S}(V_N):=(V_N-\{0\})/\R_{>0},\quad v\mapsto \overline{v}.$$ 
We equip $\mathbf{S}(V_N)$ with the quotient topology which makes it a topological $2g$-dimensional sphere. Finally, denote by 
$$v_{E}(N):=[(x+i: 0\leq x \leq 1 )]\in H_1(Y_0(N),\Z),$$ 
the class of a simple loop going around the cusp at $\infty$. It is a general fact that we have an isomorphism 
\begin{equation}\label{eq:homabelian}H_1(Y_0(N),\Z)\xrightarrow{\sim} \Gamma_0(N)^\mathrm{ab}/(\Gamma_0(N)^\mathrm{ab})_\mathrm{tor},\end{equation} 
which identitfies $v_{E}(N)$ with the image of the matrix $\begin{psmallmatrix} 1& 1\\ 0 & 1\end{psmallmatrix}$ in the abelinization. 

Our second result is the following concentration result for homology classes of closed geodesics associated to subgroups of $(\Z/q\Z)^\times$.
\begin{thm}\label{thm:second} Fix $N\geq 3$ prime. For each $q\geq 2$ such that $N|q$, choose an embedding
$$\psi:(\Z/q\Z)^\times \hookrightarrow \Gamma_0(q),$$ 
satisfying {\rm (\ref{item:1}), (\ref{item:2}), (\hyperref[item:3]{3})} and a
 subgroup $H\leq (\Z/q\Z)^\times$ such that $-1\notin H$ and 
 $$\{a\in (\Z/q\Z)^\times: a\equiv 1\modulo q/N\}\not\subset H.$$ 
 Then we have
$$\overline{\sum_{a\in H} [\mathcal{C}^\psi_{a}(N)]}\rightarrow \overline{v_E(N)},\quad \text{as }q\rightarrow \infty, $$
in the quotient topology of $\mathbf{S}(V_N)$.%If $H=((\Z/q\Z)^\times)^2$ is the square subgroup then we can take $c=1$.
\end{thm}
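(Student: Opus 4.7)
Following the strategy of \cite{Nor22}, I would decompose $V_N = V_N^{\mathrm{cusp}} \oplus V_N^{\mathrm{Eis}}$ and show that the Eisenstein component of $S_H := \sum_{a\in H}[\mathcal{C}^\psi_a(N)]$ dominates the cuspidal one as $q \to \infty$. Since $N\geq 3$ is prime, $X_0(N)$ has exactly two cusps, so $V_N^{\mathrm{Eis}} = \R\,v_E(N)$ is one-dimensional, and by Eichler--Shimura $V_N^{\mathrm{cusp}}$ is dual to weight-$2$ cusp forms for $\Gamma_0(N)$, while $v_E(N)$ is dual to the (essentially unique) weight-$2$ Eisenstein series $E_2^{(N)}$ for $\Gamma_0(N)$.

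\textbf{Cuspidal bound.} Pairing $[\mathcal{C}^\psi_a(N)]$ against $f \in S_2(\Gamma_0(N))$ yields $\int_{\mathcal{C}^\psi_a(N)}f(z)\,dz$, whose modulus is bounded by $\|y f\|_\infty \cdot |\mathcal{C}^\psi_a(N)| \ll_f \log q$, since $|\mathcal{C}^\psi_a(N)| = 2\log|\tr\psi(a)| + O(1) \ll \log q$ by the minimality condition (\hyperref[item:3]{3}). Summing,
$$\|S_H^{\mathrm{cusp}}\| \ll_N |H| \log q$$
in any fixed norm on $V_N^{\mathrm{cusp}}$. This can be sharpened via Theorem \ref{thm:first} applied to the bounded, rapidly decaying, mean-zero function $\tilde f(g) := y f(z) e^{i\theta}$ associated to $f\,dz$ (giving $\ll_{N,\eps}|H|(\log q)^\eps$ when $|H| \geq q^{7/8+\eps}$), but this refinement is not needed below.

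\textbf{Eisenstein lower bound.} The Eisenstein projection is controlled by a Dedekind--Rademacher-type quasimorphism $\Phi_N\colon \Gamma_0(N) \to \R$ (namely, the pairing against $E_2^{(N)}dz$). For $\psi(a) = \begin{psmallmatrix} a & \ast \\ q & d \end{psmallmatrix}$ the classical Rademacher formula reads
$$\Phi_N(\psi(a)) = \frac{a+d}{q} - 12\,s_N(d,q) - 3\,\sgn(a+d) + O_N(1),$$
where $s_N$ is the appropriate level-$N$ Dedekind sum. Since $d \equiv \bar a \pmod q$ (with the representative pinned down by (\hyperref[item:3]{3})) and $s_N(\cdot,q)$ depends on $d$ only modulo $q$, the $v_E(N)$-coefficient of $S_H$ satisfies
$$c_H = -12 \sum_{a\in H} s_N(\bar a, q) + O_N(|H|).$$
The basic identity $s_N(-b,q) = -s_N(b,q)$ would force $c_H = 0$ if $-1 \in H$, so the hypothesis $-1\notin H$ is exactly what permits $c_H \neq 0$; the non-containment hypothesis on $\{a \equiv 1 \pmod{q/N}\}$ rules out further degenerate symmetries specific to level $N$ (which would make $\Phi_N\circ\psi|_H$ factor through a quotient on which the Dedekind-sum projection vanishes identically). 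Adapting the coset-sum analysis of Dedekind sums from \cite{Nor22} to the present $q$-orbit parametrization then gives a lower bound of the shape
$$|c_H| \gg_N |H| \cdot q^{1-o(1)},$$
which dominates the cuspidal bound $|H|\log q$ by a power of $q$, so $\overline{S_H} \to \overline{v_E(N)}$ in $\mathbf{S}(V_N)$.

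\textbf{Main obstacle.} The hard step is the Eisenstein lower bound: the level-$N$ Dedekind sums $s_N(\bar a, q)$ can reach size up to $q$ and fluctuate erratically as $a$ ranges over $H$, so the sum could a priori exhibit massive cancellation. The two combinatorial hypotheses on $H$ must be extracted precisely to rule this out; the natural routes are direct expansion using Dedekind-sum reciprocity together with the distribution of inverses in $H$, or Fourier analysis on $(\Z/q\Z)^\times$ picking out a non-vanishing character component on which $s_N(\cdot, q)$ has non-trivial projection.
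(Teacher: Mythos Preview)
Your overall architecture --- split into cuspidal and Eisenstein components, bound the former above and the latter below --- matches the paper. The gap is quantitative, and it bites on both sides.

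\textbf{Cuspidal side.} The trivial bound $\|S_H^{\mathrm{cusp}}\| \ll_N |H|\log q$ exploits no cancellation. The paper instead uses the exact identity $\int_{\CC}F\,d\mu_\CC = -(2\pi)^{-1}L(1/2,f,a/q)$ for weight-$2$ holomorphic $f$ (Proposition~\ref{prop:holoaddtwist}), then character orthogonality plus the Birch--Stevens formula (Proposition~\ref{BirchStevens}) to write $\sum_{a\in H}L(1/2,f,a/q)$ as an average over the $\varphi(q)/|H|$ characters $\chi$ with $\chi|_H=1$ of $\tau(\overline{\chi^*})L(1/2,f\otimes\chi^*)\nu(\cdots)$. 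Blomer--Harcos subconvexity then gives $O_{N,\eps}(q^{7/8+\eps})$ \emph{independently of $|H|$}: the factor $|H|/\varphi(q)$ from orthogonality cancels against the number of characters. Your suggested sharpening via Theorem~\ref{thm:first} still carries a factor $|H|$ and only applies when $|H|\geq q^{7/8+\eps}$.

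\textbf{Eisenstein side.} The claimed lower bound $|c_H|\gg_N |H|\cdot q^{1-o(1)}$ is not merely unjustified but false: the same Birch--Stevens expansion shows $|c_H|\ll_\eps q^{1+\eps}$ regardless of $|H|$, so your bound cannot hold once $|H|\gg q^{o(1)}$. What the paper actually proves (Corollary~\ref{cor:Eisensteincontr}) is $\langle S_H,\omega_E(N)\rangle\gg_\eps q^{1-\eps}/N$, again independent of $|H|$. The mechanism is not direct Dedekind-sum manipulation but a Birch--Stevens formula for the Eisenstein class (Proposition~\ref{cor:Eiscontr}): after Fourier expansion over $\chi$ with $\chi|_H=1$, each non-principal term equals a nonnegative multiple of $(1-\chi(-1))\,|\chi^*(N)-1|^2\,|L(1,\chi^*)|^2\,\cc(\chi)\,\nu(\cdots)$. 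The two hypotheses on $H$ are used to count at least $\varphi(q)/(4|H|)$ characters giving a strictly positive contribution; Siegel's bound $L(1,\chi)\gg\cc(\chi)^{-\eps}$ together with $\nu(\chi,n;1)\geq\varphi(n)$ (Lemma~\ref{lem:E2weightlowerbnd}) then gives the lower bound. Your ``Fourier analysis on $(\Z/q\Z)^\times$'' instinct was the right one; the point is that it yields $q^{1-\eps}$, not $|H|\cdot q^{1-o(1)}$.

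\textbf{Why this is a genuine gap.} With the correct Eisenstein bound $q^{1-\eps}$ in hand, your trivial cuspidal bound $|H|\log q$ is not dominated whenever $|H|\gg q^{1-\eps}$ --- in particular for the most natural case where $H$ has index $2$. The subconvexity input on the cuspidal side is not optional; the argument is tuned so that both sides produce $|H|$-independent bounds, $q^{7/8+\eps}$ versus $q^{1-\eps}$, which then compare favourably.
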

Furthermore, we obtain a version of Theorem \ref{thm:second} which is uniform in the level $N$ (see Theorem \ref{thm:mainhomo}),  which is a $q$-orbit analogue of the result of Liu--Masri--Young \cite[Theorem 1.1]{LMY15} in the context of supersingular reduction of CM elliptic curves. 

Finally we will present the following group theoretic interpretation and application of Theorem \ref{thm:second}. We assume for simplicity that $N$ is prime and $N\equiv -1\modulo 12$ so that $\Gamma_0(N)$ is a free group on $2g+1$ generators with $g=\tfrac{N}{12}+O(1)$ (this follows from the discussion in \cite[Section 2.3]{Iw}). %which follows by combining Proposition 2.6 and equations (2.12)-(2.14) in \cite{Iw}. 
Then it has recently been shown by Doan--Kim--Lang--Tan \cite{DKLPT22} that one can find a set of free generators contained in 
\begin{equation}\label{eq:S2}\left\{\begin{psmallmatrix} 1 & 1 \\0 & 1 \end{psmallmatrix}\right\}\cup \left\{\begin{psmallmatrix} a & -(aa^{\ast} +1)/N \\N & -a^\ast \end{psmallmatrix}: 0< a<N \right\},\end{equation}
where $0< a^\ast <N$ is such that $aa^{\ast} \equiv -1\modulo N$ (here we consider the matrices (\ref{eq:S2}) as elements of $\PSL_2(\R)$ in the obvious way). It is natural to try to understand the representation of matrices in terms of such generating sets. If we look at a random word in the generators then we expect that there is about the same appearance of each generator. Theorem \ref{thm:second} implies that the $q$-orbits of closed geodesics are \emph{not} random in this sense since one coordinate dominates (see also Theorem \ref{thm:mainhomo} below). Geometrically this means that the $q$-orbits of closed geodesics ``wind around the cusp a lot''. In particular, we obtain the following elementary group theoretic non-containment result, which seems hard to prove by elementary means (see \cite{GolubevKamber23} on Sarnak's \emph{Optimal Lifting Property} for another application of automorphic forms to group theory).
\begin{cor}\label{cor:groupintro}
Let $N\equiv -1\modulo 12$ be prime and let 
$$\left\{\begin{psmallmatrix} 1& 1\\0&1 \end{psmallmatrix}, \sigma_1,\ldots, \sigma_{2g}\right\},$$ 
be a subset of (\ref{eq:S2}) which define a set of free generators of $\Gamma_0(N)$. Then for each $\delta>18$ there exists a constant $c(\delta)>0$ such that the following holds: Let $q\geq c(\delta) N^{\delta}$ be an integer such that $N|q$ and let  $0<a_1,\ldots, a_m<q-1$ be pairwise distinct integers coprime to $q$ such that 
$$ \{a_i \modulo q: 1\leq i\leq m\}\subset (\Z/q\Z)^\times,$$
is a subgroup not containing $\{a\in(\Z/q\Z)^\times: a\equiv 1\modulo q/N\}$ (nor $-1$). For $1\leq i\leq m$, let $0<a_i^\ast<q$ be defined by $q|a_ia_i^\ast+1$. 
%Let $q\geq 2$ be an integer such that $N|q$ and let  $0<a_1,\ldots, a_m<q-1$ with $m\leq c_\eps N^{9/4} q^{7/8+\eps}$ be pairwise distinct integers coprime to $q$ such that 
%$$ \{a_i \modulo q: 1\leq i\leq m\}\subset (\Z/q\Z)^\times,$$
%is a subgroup not containing $\{a\in(\Z/q\Z)^\times: a\equiv 1\modulo q/N\}$. For $1\leq i\leq m$, let $0<a_i^\ast<q$ be defined by $q|a_ia_i^\ast+1$. 
Then \underline{no} conjugate of  
\begin{equation}\label{eq:specificmatrices2}
%\prod_{1\leq i\leq m}^{\rightarrow}\begin{psmallmatrix} a_i & -\frac{a_ia_i^\ast+1}{q}\\q&-a_i^\ast \end{psmallmatrix}=
\begin{psmallmatrix} a_1 & -\frac{a_1a_1^\ast+1}{q}\\q&-a_1^\ast \end{psmallmatrix}\begin{psmallmatrix} a_2 & -\frac{a_2a_2^\ast+1}{q}\\q&-a_2^\ast \end{psmallmatrix}\cdots \begin{psmallmatrix} a_{m} & -\frac{a_{m}a_{m}^\ast+1}{q}\\q&-a_{m}^\ast \end{psmallmatrix}\in \Gamma_0(N),\end{equation}
is contained in the subgroup of $\Gamma_0(N)$ generated by the matrices $\sigma_1,\ldots, \sigma_{2g}$.

\end{cor}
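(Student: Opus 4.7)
The plan is to deduce Corollary \ref{cor:groupintro} from the concentration result for homology classes (Theorem \ref{thm:second}, or more precisely its effective uniform-in-$N$ refinement Theorem \ref{thm:mainhomo}) by passing to the abelianization of $\Gamma_0(N)$.

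Since $N \equiv -1 \pmod{12}$ is prime, $\Gamma_0(N)$ is free of rank $2g+1$ and by hypothesis $\{T, \sigma_1,\ldots,\sigma_{2g}\}$ is a free basis. Hence $\Gamma_0(N)^{\mathrm{ab}}$ is free abelian with $\Z$-basis $[T],[\sigma_1],\ldots,[\sigma_{2g}]$. Via the isomorphism (\ref{eq:homabelian}) I identify $\Gamma_0(N)^{\mathrm{ab}}$ with $H_1(Y_0(N),\Z)$; under this identification $[T]$ corresponds to $v_E(N)$ and, because the abelianization is constant on conjugacy classes, the image of $\psi(a)$ equals the class $[\mathcal{C}^\psi_a(N)]$ of the oriented closed geodesic attached to the conjugacy class of $\psi(a)$ in $\Gamma_0(N)$.

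Now suppose, for contradiction, that some conjugate of the matrix (\ref{eq:specificmatrices2}) lies inside $\langle \sigma_1,\ldots,\sigma_{2g}\rangle$. Passing to the abelianization kills the conjugation and sends the product to
\[
\sum_{i=1}^{m}[\mathcal{C}^\psi_{a_i}(N)] \;=\; \sum_{a\in H}[\mathcal{C}^\psi_a(N)] \;\in\; H_1(Y_0(N),\Z),
\]
and by assumption this vector must lie in the $\Z$-span of $[\sigma_1],\ldots,[\sigma_{2g}]$, a direct $\Z$-summand complementary to $\Z\cdot v_E(N)$. In particular its $v_E(N)$-coefficient vanishes, so (provided the class is nonzero) its image in $\mathbf{S}(V_N)$ lies in the closed subsphere $\mathbf{S}\bigl(\mathrm{span}_{\R}([\sigma_1],\ldots,[\sigma_{2g}])\bigr)$, which is disjoint from the point $\overline{v_E(N)}$.

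The hypotheses on $H$ (nontrivial index in the kernel of reduction modulo $q/N$ and $-1\notin H$) are exactly those required by Theorem \ref{thm:second}, and the quantitative statement Theorem \ref{thm:mainhomo} provides, with explicit dependence on $N$, a quantitative neighborhood of $\overline{v_E(N)}$ in $\mathbf{S}(V_N)$ into which $\overline{\sum_{a\in H}[\mathcal{C}^\psi_a(N)]}$ falls once $q \geq c_\eps N^{18+\eps}$. Since $\overline{v_E(N)}$ has positive distance (in the quotient topology of $\mathbf{S}(V_N)$) from the closed subsphere identified above, the two conclusions are incompatible for $c_\eps$ large enough, and this contradicts the assumption.

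The conceptual part of the argument is straightforward: it is a purely algebraic reduction through abelianization. The genuine content lies in the rate of concentration in Theorem \ref{thm:mainhomo}, and the exponent $18+\eps$ in $N$ is forced by matching the rate of convergence to $\overline{v_E(N)}$ against the (explicit) minimal angular gap between $\overline{v_E(N)}$ and the subsphere $\mathbf{S}(\mathrm{span}([\sigma_i]))$. This matching, together with verifying that the implied constants in Theorem \ref{thm:mainhomo} produce exactly this polynomial dependence on $N$, is the main technical obstacle.
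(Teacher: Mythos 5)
Your overall strategy is the paper's: abelianize, identify the product's image with $\sum_{a\in H}[\mathcal{C}^\psi_a(N)]$, note that membership in $\langle\sigma_1,\dots,\sigma_{2g}\rangle$ forces the $v_E(N)$-coefficient to vanish, and contradict the concentration around $\overline{v_E(N)}$. Two remarks, however, one of which is a genuine gap.

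The gap: Theorem \ref{thm:mainhomo} is not applicable under only the two hypotheses you cite (the $-1\notin H$ and the $q/N$-reduction condition). It has a third hypothesis, $\left|\sum_{a\in H}n_\psi(a)\right| \leq q^{1-\eps}/N$, and this is not automatic -- it is a quantitative statement about the particular double coset embedding $\psi(a)=\begin{psmallmatrix}a & -(aa^\ast+1)/q\\ q & -a^\ast\end{psmallmatrix}$ implicit in the statement. Your plan never constructs this $\psi$ explicitly and never addresses the bound on $\sum n_\psi(a)$. The paper verifies it by computing $\sum_{a\in H}\tr(\psi(a))/q$ in two ways: on one hand via Lemma \ref{lem:lowerbndtrace} it equals $\sum_{a\in H}n_\psi(a) + \sum_{a\in H}t_a/q$, and by (\ref{eq:cor2}) of Corollary \ref{cor:ta} the second sum is $O_\eps(|H|^{5/8}q^{1/4+\eps})$; on the other hand, since $\tr(\psi(a))=a-a^\ast$ with both $a,a^\ast$ in $(0,q)$, it equals $\sum_{0<a<q,\,a\in H}a/q - \sum_{0<a<q,\,a\in -H}a/q$, which by (\ref{eq:cor3}) is $O_\eps(|H|^{2/3}q^{1/6+\eps})$. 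Comparing gives $\sum_{a\in H}n_\psi(a)\ll_\eps q^{7/8+\eps}$, which is $\leq q^{1-\eps}/N$ precisely because $q\gg_\eps N^{18+\eps}$ (indeed $N\ll_\eps q^{1/18-\eps}$). Without this step the invocation of Theorem \ref{thm:mainhomo} is unjustified.

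A secondary, cosmetic point: you describe having to compute a ``minimal angular gap'' between $\overline{v_E(N)}$ and the subsphere. With the norm $|\!|\cdot|\!|_{B^\ast}$ built from the dual of the chosen free basis, this gap is exactly $1$ by construction: $|\!|v_E(N)|\!|_{B^\ast}=1$ and the $\omega_0$-coordinate of any vector in $\mathrm{span}([\sigma_i])$ is zero, so the left side of (\ref{eq:thmlevelaspect}) is $\geq 1$ once the $v_E$-coefficient of $\sum_{a\in H}[\mathcal{C}^\psi_a(N)]$ vanishes. The exponent $18$ therefore comes entirely from forcing $N^{9/4}q^{-1/8}<1$ in Theorem \ref{thm:mainhomo}, not from any additional geometric estimate.
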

From the analogy with CM elliptic curves this should be compared with \cite[Corollary 1.3]{LMY15}, which as explained in the introduction of \emph{loc.\ cit.}\ one should think of as an analogue of Linnik's Theorem on the least prime in an arithmetic progression (see \cite[Section 2]{Nor22} for an in-depth discussion). We also obtain results in the case where $-1\in H$ if we force the traces of the image of $\psi$ to be unbalanced (see Corollaries \ref{cor:-1} and \ref{cor:group-1}).
%Let $q\geq 2$ and define for $a\in G_q:=(\Z/q\Z)^\times$ the hyperbolic  matrix
%$$\gamma_a:=\begin{psmallmatrix} a&  b\\ q & \overline{a}\end{psmallmatrix},\quad 0<a,\overline{a}<q,\quad a\overline{a}\equiv 1 \!\mod q,\quad b=(a\overline{a}-1)/q,$$
%when $a\neq 1$, and 
%$$\gamma_{ 1}:=\begin{psmallmatrix} 1&  1\\ q & q+1\end{psmallmatrix}.$$

%$$0<a,\overline{a}<q,\quad a\overline{a}\equiv 1 \mod q,\quad b=(a\overline{a}-1)/p.$$ 

\subsection{The idea of the proof}
Our proof strategy for Theorem \ref{thm:first} follows Duke's original spectral approach as in \cite{Du88}. The starting point is Weyl's Criterion (Lemma \ref{lem:WC}), which reduces the problem to showing that the \emph{Weyl sums}; 
$$ \sum_{A\in \Cl_K^+}\int_{\CC_A} f(t) d\mu_{A}(t)$$
associated to an element $f$ of an orthonormal Hecke basis of the orthogonal complement of the constant function are negligible compared to the contribution from the constant;
\begin{equation}\label{eq:Siegel}\sum_{A\in \Cl_K^+}|\CC_A|=2|\Cl_K^+|\log \epsilon_K\gg_\eps D_K^{1/2-\eps},\end{equation}
where $\epsilon_K$ is the fundamental unit of $K$, and the ineffective lower bound is due to Siegel (via the class number formula). A slight reinterpretation of Duke's original treatment of the Weyl sums is then to relate their square 
to the product of $L$-functions 
$$L(1/2,f)L(1/2,f , \chi_K),$$ 
where $\chi_K$ denotes the quadratic character associated to $K$ via class field theory. This is obtained using an explicit version of Waldspurger's formula \cite{Waldspurger85} (actually Duke related the Weyl sums to Fourier coeffecients of half integral weight Maa{\ss} forms using an explicit correspondence of Maa{\ss}). The result now follows from a subconvexity bound for $L(1/2,f , \chi_K)$ in terms of the discriminant of $K$. This was obtained by Duke building on a breakthrough of Iwaniec \cite{Iw87}. 

Our approach follows the same route; on the one hand we need \emph{lower} bounds for the contribution from the constant function, and on the other hand we need \emph{upper} bounds for the non-constant Weyl sums. The lower bound for the constant function is achieved via a study of the small scale distribution of $(\tfrac{a}{q},\tfrac{\overline{a}}{q})$ on the torus $(\R/\Z)^2$ where $a\overline{a}\equiv 1\modulo q$, see Section \ref{sec:distri}. To upper bound the Weyl sums we will use as a substitute for Waldspurger's formula the \emph{Birch--Stevens formula} \cite[Proposition 6.1]{No19} which in its original form states;
\begin{equation}\label{eq:BirchStevens} \tau(\overline{\chi})L(E, \chi ,1)= \sum_{a\in (\Z/q\Z)^\times} \overline{\chi(a)}2\pi i \int_{a/q}^{i\infty} f_E(z)dz,   \end{equation}
where $\chi$ is a primitive Dirichlet character modulo $q$, $\tau(\overline{\chi})$ is a Gauss sum and $f_E$ is a weight 2 holomorphic form associated to an elliptic curve $E/\Q$. The period integrals above are known as \emph{modular symbols}. This formula generalizes to non-primitive characters and to general Hecke--Maa{\ss} forms  $f$ by taking in place of the modular symbol, the central value of the \emph{additive twist  $L$-series} of $f$ defined by analytic continuation of
$$ L(s,f,x)=\sum_{n\geq 1} \frac{\lambda_f(n)e(nx)}{n^s},\quad x\in \Q,$$
see Proposition \ref{BirchStevens}. The key insight in the present work (Proposition \ref{prop:main}) is a relation between the geodesic integrals
$$\int_{\CC^\psi_a(N)} f(t) d\mu_a^\psi(t)$$
and the additive twists $L(1/2,f,x)$ where $x=\psi(a)\infty$. Proving this requires a delicate analytic estimate relying on the Fourier expansion of automorphic forms and is responsible for the fact that we only get $\log$-savings in (\ref{eq:first}). %closely related to the fact that additive twists of Maa{\ss} forms define \emph{quantum modular forms} in the sense of Zagier as proved by Drappeau and the author \cite{DrNo22}. 
The result now follows by the subconvexity bound $L(1/2,f,\chi)\ll_{f,\eps} q^{3/8+\eps}$ for $\chi \modulo q$ due to Blomer and Harcos \cite{BlomerHarcos08} as well as another application of the fine scale distribution of $(\tfrac{a}{q},\tfrac{\overline{a}}{q})$ on the torus $(\R/\Z)^2$ to deal with the singular case where $(a+\overline{a}\modulo q)$ is {\lq\lq}small{\rq\rq}. The condition that $|H|>q^\delta$ with $\delta>7/8$ in Theorem \ref{thm:first} comes exactly from the subconvexity exponent available, see the proof of Theorem \ref{thm:weylsums} for more details. If one assumes the best-possible bound, the Generalized Lindel\"{o}f Hypothesis, then equidistribution holds as long as $\delta>1/2$. We however believe that any $\delta>0$ suffices, see Conjecture \ref{conj:sparse} below.    

In the proof of Theorem \ref{thm:second} we apply a vector space version of Weyl's Criterion (Lemma \ref{lem:WCvector}) which reduces the problem to checking a certain convergence after pairing with a basis for the dual of $H_1(Y_0(N),\R)$. This dual is given by weight $2$ holomorphic forms and the main term comes from the Eisenstein element. The contributions from holomorphic cuspidal Hecke eigenforms of weight $2$ are bounded exactly as in the context of Theorem \ref{thm:first} (in fact the Weyl sums are literally the same). The lower bound for the Eisenstein contribution is achieved using an explicit version of the Birch--Stevens formula for the Eisenstein class as well as a positivity argument and lower bounds for Dirichlet $L$-functions at $s=1$, see Section \ref{sec:Eiscontri}.
\subsection{Final remarks}\label{sec:conjectures}
We will end the introduction by raising a number of natural questions in view of the real quadratic analogue (i.e. Duke's Theorem for closed geodesics). 

\begin{conj}\label{conj:sparse}Fix $\delta>0$. The conclusion of Theorem \ref{thm:first} holds for any subgroup $H\leq (\Z/q\Z)^\times$ such that $|H|\geq q^{\delta}$.\end{conj}
This is a $q$-orbit analogue of the sparse equidistribution conjecture of Michel--Venkatesh \cite[Conjecture 1]{MichelVenk06} stated in the context of Heegner points. Secondly it seems natural to try to obtain a power saving as was obtained by Duke in his original paper \cite{Du88}.
\begin{conj}The conclusion of Theorem \ref{thm:first} holds with a power saving error-term (in terms of $q$).\end{conj}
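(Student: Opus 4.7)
\medskip

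The bottleneck preventing power savings in Theorem~\ref{thm:first} is isolated in Proposition~\ref{prop:main}: the error in comparing the geodesic integral $\int_{\CC_a^\psi(N)} F\,d\mu_a^\psi$ to $c_F^+ L(1/2, F, a/q) + c_F^- L(1/2, F, -a/q)$ is $O(|\mathbf{t}_F|^{3})$ in the relevant regime $|\tr(\psi(a))| \asymp q$, i.e.\ $O(1)$ per geodesic. Summed over $a \in cH$, this contributes $O(|H|)$ to the Weyl sums, which upon division by the total geodesic length $\asymp |H|\log q$ yields exactly the observed $(\log q)^{-1+\eps}$ savings. A power saving in $q$ therefore requires improving either the geometry--to--$L$-value conversion on average in $a$, or the lower bound on $\sum_{a\in cH} |\CC_a^\psi(N)|$; the latter is blocked by the assumption $|\tr(\psi(a))| \leq Mq$, so the former must be attacked.

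The plan is to replace Proposition~\ref{prop:main} with a refined identity that, rather than bounding the three sources of error in the proof of Proposition~\ref{prop:main}, namely (i) the curvature correction comparing $g_\gamma a_t i$ to the vertical ray $\gamma\infty + iy_{\gamma,t}$, (ii) the deviation of the automorphy factor $j_{g_\gamma a_t}(i)^{-k}$ from its limit $(-\epsilon_t)^{k/2}$, and (iii) the tail/boundary integrals over $[q^{-1}, h_\gamma]$ and $[r_\gamma, \infty)$, keeps each as an explicit oscillatory integral. Substituting the Fourier expansion of $F$ at~$\infty$ introduces for each Fourier mode $n$ a phase factor $e(\pm n\gamma\infty) = e(\pm na/q)$, so summing over $a \in cH$ converts the remainder into a linear combination of coset Kloosterman sums $S_{cH}(m,n;q)$ weighted by archimedean transforms of the Whittaker functions $\tilde W_{\pm k/2, it_F}$. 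The archimedean integrals are controllable by standard stationary-phase analysis with polynomial loss in $|\mathbf{t}_F|$, while the Kloosterman sums are bounded by Corollary~\ref{cor:Klooster} (or, in the prime-$q$ case, by the Bourgain--Konyagin and Ostafe--Shparlinski--Voloch estimates alluded to in Remark~\ref{rem:Igor}). The net effect should be to replace the $O(|H|)$ contribution by $O(|H|^{\alpha} q^{\beta})$ with $\alpha+\beta < 1$, producing a power saving when combined with the existing subconvex bound $q^{7/8+\eps}$.

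The hardest step will be extracting genuine oscillation from the errors in (i) and (ii): these are polynomial in $y_{\gamma,t}/r_\gamma$ rather than purely oscillatory, so a naive Fourier expansion produces additional smooth weights whose stationary-phase analysis couples intricately with the $a$-dependent parametrization $g_\gamma$ of the geodesic. One would likely need to introduce a dyadic partition of unity in the $t$-variable matched to the Fourier frequency $n$, apply repeated integration by parts to obtain rapid decay outside a critical window around a stationary point, and only then bound the resulting Kloosterman-like sum at the critical frequency. A more conceptual route, which I suspect is the right long-term approach, would be to express the geodesic orbit integral as a relative period using Whittaker expansions on \emph{both} the unipotent and diagonal sides, producing an exact identity (no error term at all) of the geodesic integral with periods to which Blomer--Harcos--style subconvexity applies directly; this would bypass the approximate nature of Proposition~\ref{prop:main} entirely, at the cost of significant additional representation-theoretic setup.
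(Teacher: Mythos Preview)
The statement you are addressing is a \emph{conjecture}, not a theorem: it appears in the paper's final section under the heading ``Final remarks'' as one of several ``natural questions'' the author raises, and the paper provides no proof whatsoever. There is therefore no proof in the paper to compare your proposal against.

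What you have written is not a proof either; it is a research programme. That said, your diagnosis of the obstruction is accurate. The $(\log q)^{-1+\eps}$ saving in Theorem~\ref{thm:first} is indeed dictated by the $O_{N,\eps}(|\mathbf t_F|^3)$ error in Proposition~\ref{prop:main} for the regime $|\tr(\psi(a))|\asymp q$: summed over $cH$ this gives $O(|H|)$ against a main term of size $|H|\log q$, and the subconvex contribution $q^{7/8+\eps}$ is already power-saving. So you have correctly located where the work must go.

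Your proposed attack---retaining the three error terms in the proof of Proposition~\ref{prop:main} as explicit Fourier sums and then exploiting cancellation over $a\in cH$ via coset Kloosterman bounds---is a reasonable first idea, but it is far from a proof and the obstacles you flag are real. The error terms (i) and (ii) are \emph{not} oscillatory in $a$ in any obvious way: the quantities $y_{\gamma,t}/r_\gamma$ depend on $a$ only through $\tr(\psi(a))$, and the Fourier phase $e(na/q)$ is multiplied by smooth weights in $y$ that also depend on $a$ through $r_\gamma\asymp |\tr(\psi(a))|/q$. Extracting a clean Kloosterman sum after stationary phase would require decoupling the $a$-dependence of the archimedean weight from that of the additive character, and it is not clear this can be done without losing the savings. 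Your alternative suggestion of an exact period identity is more promising in principle, but at present it is a wish rather than a method. In short: the proposal is a sensible sketch of where to look, but it does not resolve the conjecture, and the paper does not claim to either.
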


Finally, it would be interesting to obtain a statement in the level aspect, meaning a version of Theorem \ref{thm:first} where the level $N$ is allowed to vary. This was studied by Liu--Masri--Young \cite{LiuMasriYoung13} in the context of Heegner points and by Humphries and the author in \cite{HumphriesNordentoft22} in the context of hyperbolic orbifolds associated to ideal classes of real quadratic fields.%Following Duke's approach to Linnik's problem, the key property is that the Weyl sums associated to the distribution of Heegner points say, are related via Waldspurger's formula to central values of twisted $L$-functions. 
\section*{Acknowledgements}
The author would like to thank Sary Drappeau, Daniel Kriz and Farrell Brumley for useful discussions, as well as Igor Shparlinski for suggesting the simplified proof of Corollary \ref{cor:Klooster}. The author would also like to give a special thanks to the referees for exceptionally thorough readings, insightful remarks and for pointing out many imprecisions. The comments helped to raise the quality of the paper significantly. The author's research was supported by the Independent Research Fund Denmark DFF-1025-00020B.%A natural question is whether other period formulas for $L$-functions \emph{corresponds} to some equidistribution problem. By this we mean that one gets a correspondance between different type of subconvexity bounds and different types of equidistribution problems as studied by Young and collaborators: twist--basic equidistribution, shrinking--archimedean, level aspect--level aspect. The connection between period formulas and equidistribution has been considered in the context of the Ichino--Ikeda formula.  

%Another type of period formula is the Birch--Stevens formula which reads:

%In this paper we show that this formula \emph{corresponds} to the equidistribution 

\section{Background}
Let 
$$\Gamma_0(N):=\left\{ \gamma\in \PSL_2(\Z): \gamma\equiv \begin{psmallmatrix}\ast&\ast\\ 0&\ast\end{psmallmatrix}\modulo N\right\}\leq \PSL_2(\R),$$ be the Hecke congruence group of level $N$ considered here as a subgroup of $\PSL_2(\R)$. The group $\Gamma_0(N)$ acts on $\Hb\cup \mathbf{P}^1(\R)$ by linear fractional transformations which preserves the cusps $\mathbf{P}^1(\Q)$. Given a matrix in $\SL_2(\R)$ we will without further mentioning consider it as an element of $\PSL_2(\R)$ by taking the image under the projection map.  

\subsection{Closed geodesics on modular surfaces}\label{sec:closedgeo} It follows from a general fact for Fuchsian groups that there is a one-to-one correspondence;
\begin{equation}\label{eq:geoconj} \{\text{oriented closed geodesics on $Y_0(N)$}\}\leftrightarrow \mathrm{hConj}(\Gamma_0(N)),\end{equation}
where $\mathrm{hConj}(\Gamma_0(N))$ denotes the set of hyperbolic conjugacy classes of $\Gamma_0(N)$. Explicitly, given a hyperbolic matrix
$$\gamma=\begin{psmallmatrix}a& b\\c&d\end{psmallmatrix}\in \Gamma_0(N)$$ denote by $S_\gamma\subset \Hb$ the axis of $\gamma$, i.e. the fixed semi-circle of $\gamma$ which has endpoints on $\R$ given by
\begin{equation}\tfrac{a-d\pm \sqrt{(a+d)^2-4}}{2c}.\end{equation} 
The bijection (\ref{eq:geoconj}) is now given by mapping the conjugacy class of $\gamma$ to the projection to $Y_0(N)$ of the oriented geodesic segment connecting $w$ and $\gamma w$ for some $w\in S_\gamma$. Note that this projection is indeed independent of the choice of $w\in S_\gamma$ and depends only on the conjugacy class of $\gamma$. Given an oriented closed geodesic $\CC\subset \Gamma\backslash \Hb$, we can consider the obvious lift to the unit tangent bundle 
$$\mathbf{T}^1(Y_0(N))=\Gamma_0(N)\backslash \PSL_2(\R),$$
recalling that $\Hb\cong \PSL_2(\R)/\mathrm{PSO}_2$. By slight abuse of notation we will also denote the lift by the same symbol (e.g. $\CC$) and will refer to these as \emph{oriented closed geodesics} on $\mathbf{T}^1(Y_0(N))$. Let 
$$a_t:=\begin{psmallmatrix}e^{t/2}& 0 \\ 0 & e^{-t/2} \end{psmallmatrix},\quad t\in \R,$$
and denote by 
$$A:=\left\{a_t: t\in \R \right\}\leq \PSL_2(\R),$$ 
the diagonal subgroup equipped with the Haar measure $dt$. An oriented closed geodesics $\CC\subset \mathbf{T}^1(Y_0(N))$ is a closed and compact $A$-orbit and as such the space of $A$-invariant measures on $\CC$ is one-dimensional. We will denote by $\mu_\CC$ the unique $A$-invariant measure on $\CC$ which descends to the hyperbolic line element $ds$ when $\CC$ is projected to $Y_0(N)$. %This is exactly the same as the measure on $\CC$ induced by the measure $dt$ on the diagonal subgroup $T$.

Given an oriented closed geodesic $\CC\subset Y_0(N)$ corresponding to the hyperbolic conjugacy class of $\gamma=\begin{psmallmatrix} a&b\\ c & d\end{psmallmatrix}\in \Gamma_0(N)$, we define a number of key quantities (see Figure \ref{fig:S} for a useful picture to have in mind). Let $\lambda_\gamma> 1$ be defined by $|\tr (\gamma)|=\lambda_\gamma+(\lambda_\gamma)^{-1}$ and put $\ell_\gamma:=\log \lambda_\gamma$. The geodesic length of $\CC$ then satisfies 
\begin{equation}\label{eq:geodesiclength}|\CC|:= \int_\CC ds(z)=2\ell_\gamma\asymp \log |\tr (\gamma) | .\end{equation}      
Furthermore, we define%, and put
%\begin{equation}\label{eq:ellgamma}\ell_\gamma=\ell_\CC:=|\CC|/2\asymp \log |\tr (\gamma) |.\end{equation} 
%Note that we have 
%\begin{equation}|\CC| \asymp \log |\tr (\gamma) |,\end{equation}
%which we will use throughout without mentioning.
%Given a hyperbolic matrix , we define the following key invariants . 
\begin{equation}\label{eq:epsgamma}\epsilon_\gamma:=\frac{c(a+d)}{|c(a+d)|}\in \{\pm 1\},\qquad r_\gamma:=\frac{\sqrt{(a+d)^2-4}}{2|c|},\end{equation}
(note that $r_\gamma$ is the radius of $S_\gamma$) and 
%\begin{equation}\label{eq:ggamma}g_\gamma:=\tfrac{1}{\left(2c \sqrt{(\tau_\gamma)^2-4}\right)^{1/2}}\begin{pmatrix} a-d+\epsilon_\gamma\sqrt{(\tau_\gamma)  ^2-4} &\epsilon_\gamma(a-d)-\sqrt{(\tau_\gamma)^2-4}\\ 2c & \epsilon_\gamma2c\end{pmatrix}\in \PSL_2(\R).\end{equation}
\begin{equation}\label{eq:ggamma}g_\gamma:=(2r_\gamma)^{-1/2}
\begin{pmatrix} \frac{a-d}{2c}+\epsilon_\gamma r_\gamma &\epsilon_\gamma\frac{a-d}{2c}-r_\gamma\\ 1 & \epsilon_\gamma\end{pmatrix}\in \PSL_2(\R).
\end{equation}
(Note that $c\neq 0$ since $\gamma$ is hyperbolic and that the above quantities do not depend on the matrix in $\SL_2(\Z)$ representing $\gamma\in \PSL_2(\Z)$). A direct calculation with matrices shows that
$$ \gamma g_\gamma a_t =g_\gamma a_{t+|\CC|} ,\quad t\in \R,  $$
which implies that we have the following standard parametrization of $\CC$:
\begin{equation}\label{eq:param}
\int_\CC \phi(t) d\mu_\CC(t)=\int_{-|\CC|/2}^{|\CC|/2} \phi(g_\gamma a_t) dt=\int_{-\ell_\gamma}^{\ell_\gamma} \phi(g_\gamma a_t) dt,
\end{equation}
for any $\phi:\mathbf{T}^1(Y_0(N))\rightarrow \C$ which is integrable with respect to $\mu_\CC$. Here the left-hand side of (\ref{eq:param}) should be thought of as ``integrating $\phi$ along $\CC$ with unit speed''. Finally we define 
\begin{align}
%&\tau_\gamma :=|\tr (\gamma)|=|a+d|\geq 3,\\
&y_{\gamma,t}:= \Im (g_\gamma a_t i)=  \tfrac{2r_\gamma}{e^t+e^{-t}},\quad t\in \R,\\
\label{eq:ygammat}&h_\gamma:=y_{\gamma,\pm |\CC|/2}= \tfrac{2r_\gamma}{|a+d|}.\end{align} 
%&r_\gamma:=y_{\gamma,0}=\frac{\sqrt{(\tau_\gamma)^2-4}}{2c}.\end{align}
\begin{figure}
  \begin{tikzpicture}
    \begin{scope}
      \clip (4,0) rectangle (-4,4);
      \draw (0,0) circle(3);
      \draw[dotted] (-3.5,3) --(3.2,3);
       \draw[dotted] (-3.5,0.6) --(3.2,0.6);
       \draw[dotted] (-3.2,0.74) --(3.5,0.74);

      %\draw (-0.2,0) circle(1.3);
      \draw (4,0) -- (-4,0);
      \draw (-3.2,0) --(-3.2,8);
      \draw (3.2,0) --(3.2,8);

     % \draw (1.1,0) --(1.1,4);
      %\fill[black, opacity=0.2] (1.5,0) arc[start angle = 0, end angle = (42.8), radius = 1.5] --(1.1,4)--(1.5,4)--cycle;
     % \fill[black, opacity=0.2] (1.1,0) arc[start angle = 0, end angle = 180, radius = 1.3] arc[start angle = 180, end angle = (42.8), radius = 1.5] --cycle; 
    \end{scope}
    \draw[dotted] (0,0) --(0,-0.2);
    \draw[dotted] (-3.2,0) --(-3.3,-0.3);
    \draw[dotted] (-3,0) --(-2.8,-0.3);
        \draw[dotted] (3.2,0) --(3.3,-0.3);
    \draw[dotted] (3,0) --(2.8,-0.3);
           \node at (-3.6,-0.24) [below] {$\tfrac{-d}{c}$};
           \node at (0,-0.24) [below] {$\tfrac{a-d}{2c}$};
           \node at (-3.4,3) [left] {$r_\gamma$};
           \node at (-3.4,0.6) [left] {$h_\gamma$};
           \node at (3.4,0.8) [right] {$c^{-1}$};
        \node at (3.6,-0.24) [below] {$\tfrac{a}{c}$};
         \node at (2.0,-0.24) [below] {$\tfrac{a-d}{2c}+\epsilon_\gamma r_\gamma$};
         \node at (0,3) [below] {$S_\gamma$};

         \node at (-2.0,-0.24) [below] {$\tfrac{a-d}{2c}-\epsilon_\gamma r_\gamma$};
    %\node at (1.5,0) [below] {$\gamma \infty$};
    %\node at (1.1,-0.121) [below] {$a/q$};
   % \node at (-1.5,-0.121) [below] {$\gamma \infty$};
   % \node at (0,1.5) [above] {$A_2$};
   % \node at (1.1,3) [left] {$A_1$};
  \end{tikzpicture}
  \caption{Key quantities associated to $\gamma=\begin{psmallmatrix} a&b\\c&d\end{psmallmatrix}$ (with the orientation corresponding to the case where $\epsilon_\gamma>0$).}\label{fig:S}
\end{figure}  
%$$ \{ g_\gamma a_t: -\ell_\CC\leq t\leq \ell_\CC\}\subset \Gamma_0(N)\backslash \PSL_2(\R),$$
%oriented from $g_\gamma a_{-\ell_\CC}$ to $g_\gamma a_{\ell_\CC}$.   
\subsubsection{Double coset embeddings}\label{sec:DCE}%%%%%%%%%%%%%%%%%%%%%%%% 
The packets (or $q$-orbits) of oriented closed geodesics (\ref{eq:qorbit}) that we are considering are determined by maps $\psi:(\Z/q\Z)^\times \rightarrow \Gamma_0(N)$ satisfying the conditions (\ref{item:1}), (\ref{item:2}) above. Clearly, these conditions do not determine the packets uniquely. In this section we will characterize all possible packets of geodesics that can be obtained and prove some basic properties.
 
More precisely, given two positive integers $N,q$ such that $N|q$, we will now give a complete description of the possible compositions 
\begin{equation}\label{eq:doublecosetchar}(\Z/q\Z)^\times\xrightarrow{\psi} \Gamma_0(N)\xrightarrow{\pi_N}\mathrm{Conj}(\Gamma_0(N)),\end{equation} 
where $\psi(a)\infty\equiv \tfrac{a}{q}\modulo 1$ for all $a\in (\Z/q\Z)^\times $
%\item \label{item:2.1}$\psi(a)$ is a hyperbolic matrix.
%\end{enumerate}
and $ \pi_N:\Gamma_0(N)\rightarrow \Conj(\Gamma_0(N))$ denotes the map 
$$\gamma\mapsto \{\gamma\}_{\Gamma_0(N)},$$ 
sending a matrix to its conjugacy class.
% Notice that for $a\neq \pm 1$ the second condition follows from the first since the trace satisfies $\tr\, \psi(a)\equiv a+\overline{a}\modulo q$ where $a\overline{a}\equiv 1\modulo q$.
The classical double coset decomposition (see e.g. \cite[Sec. 2.4]{Iw}) gives a bijection of sets:
\begin{equation}\label{eq:docode}\Gamma_\infty\backslash\Gamma_0(N)/\Gamma_\infty\leftrightarrow 
\{\Gamma_\infty\begin{psmallmatrix} 1&  1 \\ 0 & 1\end{psmallmatrix} \Gamma_\infty\}\cup \bigcup_{q\geq 1: N|q}\,\, \bigcup_{a\in (\Z/q\Z)^\times }  \{\Gamma_\infty\begin{psmallmatrix} a&  \ast \\ q & \ast\end{psmallmatrix} \Gamma_\infty \}.
 \end{equation}
From this we see that the matrix $\psi(a)$ is determined by the above conditions up to left and right multiplication by $\Gamma_\infty$ and we arrive at the following characterization.
\begin{lemma}\label{lem:DCE}
Any composition $\pi_N\circ \psi: (\Z/q\Z)^\times \rightarrow \Conj(\Gamma_0(N))$ as in (\ref{eq:doublecosetchar}) is a specialization of the map 
$$ \Psi_N:\Z\oplus\Gamma_\infty\backslash\Gamma_0(N)/\Gamma_\infty\twoheadrightarrow \mathrm{Conj}(\Gamma_0(N)),$$
given by 
$$ \left(n,\Gamma_\infty\begin{psmallmatrix} a&  \ast \\ q & \ast\end{psmallmatrix} \Gamma_\infty \right)\mapsto  \left\{\begin{psmallmatrix} a&  b\\ q & d\end{psmallmatrix}\right\}_{\Gamma_0(N)}, $$
where 
\begin{equation}\label{eq:Psi}ad\equiv 1\, \modulo q,\quad (n-\tfrac{1}{2})q\leq a+d<(n+\tfrac{1}{2})q,\quad b=\tfrac{ad-1}{q}.\end{equation} 
\end{lemma}
\begin{proof} This follows directly from the double coset decomposition (\ref{eq:docode}) since multiplication of $\psi(a)$ by $\Gamma_\infty$ on the left or right shifts the trace by multiples of $q$.\end{proof}
 In other words; the double coset together with the choice of trace define a unique conjugacy class. Notice that all the restrictions of $\Psi_N$ to the first component $\Z$ are injective (since all of the images have different traces). 
 %Here by {\lq\lq}restriction of $\Psi_N${\rq\rq} we mean the maps $\Z\rightarrow \mathrm{Conj}(\Gamma_0(N))$ given by
%$$ n\mapsto \Psi_N(n,\Gamma_\infty\begin{psmallmatrix} a&  \ast \\ q & \ast\end{psmallmatrix} \Gamma_\infty ).$$ 
In our setting we will also require that the image is contained in the set of hyperbolic conjugacy classes so that they give rise to closed geodesics. Motivated by the above discussion we arrive at the following definition.
\begin{defi}\label{def:psi}
A \emph{double coset embedding of level $q$} is a map $\psi:(\Z/q\Z)^\times \rightarrow \Gamma_0(q)$ satisfying for all $a\in (\Z/q\Z)^\times$ that
\begin{enumerate}
\item \label{item:1.1} $\psi(a)\infty\equiv \tfrac{a}{q}\modulo 1$,
\item \label{item:2.1} $|\tr(\psi(a))|>2 $.
\end{enumerate} 
\end{defi}
Given a double coset embedding $\psi$ of level $q$ and an integer $N|q$, we denote by 
\begin{equation} \label{eq:CCapsiN}\CC_a^\psi(N)\subset \mathbf{T}^1(Y_0(N)), \end{equation}
the oriented closed geodesic of level $N$ associated to the $\Gamma_0(N)$-conjugacy class of $\psi(a)\in \Gamma_0(q)\subset \Gamma_0(N)$ under the identification (\ref{eq:geoconj}). For notational simplicity we will denote the canonical measure on $\CC_a^\psi(N)$ as defined in the previous section by 
\begin{equation}\label{eq:muapsiN} \mu_a^\psi:= \mu_{\CC_a^\psi(N)}, \end{equation}
suppressing the level $N$ in the notation. For $a\in (\Z/q\Z)^\times$ denote by  
\begin{equation}\label{eq:ta} t_a:=q\left(\{\tfrac{a+\overline{a}}{q}+\tfrac{1}{2}\}-\tfrac{1}{2}\right)\in \Z,\end{equation}
the \emph{minimal trace of $a$}, where $\{x\}=x-\lfloor x\rfloor$ denotes the fractional part and $\overline{a}a\equiv 1\modulo q$. Given a double coset embedding $\psi$ of level $q$ and $a\in (\Z/q\Z)^\times $   we define $n_\psi(a)\in \Z$ by the property:  
\begin{equation} \{\psi(a)\}_{\Gamma_0(N)}= \Psi_N\left(n_\psi(a),\Gamma_\infty\begin{psmallmatrix} a&  \ast \\ q & \ast\end{psmallmatrix} \Gamma_\infty \right),\end{equation} 
which makes sense by Lemma \ref{lem:DCE}. The following lemma justifies the fact that we call $t_a$ the minimal trace. 
\begin{lemma}\label{lem:lowerbndtrace}
Let $\psi$ be a double coset embedding of level $q$. Then for all $a\in (\Z/q\Z)^\times $ it holds that 
\begin{align}\label{eq:lowerbndtrace}\tr(\psi(a))=t_a+qn_\psi(a),\quad\text{and}\quad  |\tr(\psi(a))|\geq |t_a|.\end{align}
\end{lemma}
\begin{proof}
We observe that $-q/2\leq t_a<q/2 $ and $t_a\equiv a+\overline{a}\modulo q$. Thus the first equality in (\ref{eq:lowerbndtrace}) follows by the definition (\ref{eq:Psi}) of $\Psi_N$ combined with that of $n_\psi(a)$. Furthermore we see that indeed $|t_a|$ is the minimal possible absolute trace of $\psi(a)$ for any $\psi$ satisfying (\ref{item:1.1}) which implies the lower bound in (\ref{eq:lowerbndtrace}).
\end{proof}
 %and thus in particular we have the lower bound $|\tr(\psi(a))|\geq |t_a|$.

%%that in fact $t_a$ is the integer determined by the conditions;
%\begin{equation}\label{eq:ta}-q/2\leq t_a< q/2,\quad\text{and}\quad  a+\overline{a}\equiv t_a\modulo q .\end{equation}
%Note that for any $\psi$ satisfying (\ref{item:1.1}) we have

%where $\{\gamma\}_{\Gamma_0(N)}\in \Conj(\Gamma_0(N))$ with $\gamma\in\Gamma_0(N)$ denotes the conjugacy class of $\gamma$
\subsection{Spectral theory of automorphic forms}
For a detailed account of the following material we refer to~\cite[Chapter~2]{Iw}, \cite[Section 4]{DuFrIw02}. For a cusp $\mathfrak{a}\in \mathbb{P}(\Q)$ of $\Gamma_0(N)$, let $\Gamma_\mathfrak{a}\subset \Gamma_0(N) $ be the stabilizer of $\mathfrak{a}$ and fix a scaling matrix $\sigma_\mathfrak{a}$, i.e. any matrix satisfying 
$$\sigma_\mathfrak{a}^{-1} \Gamma_\mathfrak{a} \sigma_\mathfrak{a} = \{\begin{psmallmatrix} 1 & \Z \\ & 1\end{psmallmatrix} \}.$$

For $k\in 2\Z$, we denote by $\AA(\Gamma_0(N), k)$ the vector space of all \emph{weight $k$ automorphic forms of level $N$ (with trivial nebentypus)}, i.e. smooth maps $f: \Hb \rightarrow \C$ satisfying for all~$\gamma \in \Gamma_0(N)$ and~$z\in\Hb$:
  \begin{equation}f(\gamma z) = j_\gamma(z)^k f(z),\qquad 
    j_\gamma(z) = \frac{j(\gamma, z)}{|j(\gamma,z)|}= \frac{cz+d}{|cz+d|}.\label{eq:def-ugamma}
  \end{equation}
%We borrow the analytic notations from~\cite{DuFrIw02}. 
Let
\begin{equation}\label{eq:levelraising} R_k = \frac k2 + (z-\bar z) \frac{\partial}{\partial z},\quad \Lambda_k = \frac k2 + (z-\bar z) \frac{\partial}{\partial \overline{z}}  \end{equation}
be respectively the weight~$k$ raising and  lowering operator, as defined in~\cite[eqs.~(4.3)-(4.4)]{DuFrIw02}. These define maps 
$$R_k:\AA(\Gamma_0(N) , k)\rightarrow \AA(\Gamma_0(N) , k+2),\quad \Lambda_k:\AA(\Gamma_0(N) , k)\rightarrow \AA(\Gamma_0(N) , k-2).$$ The weight~$k$ Laplacian is defined by
$$ -\Delta_k :=R_{k-2} \Lambda_{k}+\frac{k}{2}\left(1-\frac{k}{2}\right)=\Lambda_{k+2} R_{k}-\frac{k}{2}\left(1+\frac{k}{2}\right)= -y^2\Big(\frac{\partial^2}{\partial x^2} + \frac{\partial^2}{\partial y^2}\Big) + iky \frac{\partial}{\partial x}. $$
For $k\in 2\Z_{\geq 0}$ and $s\in \C$, we define  an operator 
\begin{equation}\label{eq:involution}
  Q_{s,k}:\AA(\Gamma_0(N),  k)\rightarrow \AA(\Gamma_0(N),  k),
\end{equation} 
as in~\cite[eq.~(4.65)]{DuFrIw02} by 
$$ (Q_{s,k}f )(z)=\frac{\Gamma(s-k/2)}{\Gamma(s+k/2)} (\Lambda_{-k+2} \cdots\Lambda_{k-2} \Lambda_{k} f )(-\overline{z}),   $$
where we put $Q_{s,k}=0$ if~$s-k/2\in \Z_{\leq 0}$. Notice that for $k=0$ we have $(Q_{s,0}f )(z)=f (-\overline{z})$ which is the usual reflection operator. The operator $Q_{s,k}$ preserves the eigenspace of $-\Delta_k$ with eigenvalue $s(1-s)$, and is an involution for $s\not\in k/2+\Z_{\leq 0}$.
Similarly, we define $Q_{s,k}$ for negative $k$ using the raising operators.  

We say that $f  \in \AA(\Gamma_0(N), k)$ is a \emph{Maa{\ss} form} if it satisfies;
\begin{itemize}
\item For all~$\mathfrak{a} \in \mathbb{P}(\Q)$, $f (\sigma_\mathfrak{a} z) = o(e^{2\pi y})$ as $y = \Im z\rightarrow \infty$.
\item $f $ is an eigenfunction of $-\Delta_k$ with eigenvalue $\lambda_f  = s_f (1-s_f ) = 1/4+t_f ^2$ with $\Re s_f  \geq 1/2$ and $s_f  = 1/2+it_f $.
\item $f $ is an eigenfunction of the operator~$Q_{s_f ,k}$ with eigenvalue~$\epsilon_f  \in \{\pm 1,0\}$. Note that if $k=0$ then $\epsilon_f $ is the \emph{sign} of the Maa{\ss} form~\cite[p.~106]{Bump97}, and if $f $ comes from a holomorphic form then $\epsilon_f =0$. \end{itemize}

We define the \emph{spectral datum} of a Maa{\ss} form $f\in \AA(\Gamma_0(N), k)$ as
\begin{equation}\label{eq:tf}\mathbf{t}_f:=(it_f,k,\epsilon_f),\end{equation}  
and we define the following convenient quantity measuring the archimedean complexity of $f$;
\begin{equation}
|\mathbf{t}_f|:=|t_f|+|k|+1.
\end{equation}
Recall that the Whittaker function $W_{\alpha,\beta}:\R_{>0}\rightarrow \C$ with parameters $\alpha,\beta\in \C$ is the unique solution $W$ to
$$  \frac{d^2 W}{dy^2}+ \left(-\frac{1}{4}+\frac{\alpha}{y}+\frac{1/4-\beta^2}{y^2}\right)W=0, $$
satisfying $W(y)\sim y^{\alpha}e^{-y/2}$ as $y\rightarrow \infty$ (with $\alpha,\beta$ fixed). In particular, we have 
\begin{equation}\label{eq:specialWhit}W_{0,\beta}(4\pi y)=2 y^{1/2}K_{\beta}(2\pi y),\quad W_{\alpha,\alpha-1/2}(4\pi y)= (4\pi y)^{\alpha}e^{-2\pi y}\end{equation} 
where $K_\beta(y)$ denotes the $K$-Bessel function.  We consider the following convenient normalization of the Whittaker function: 
\begin{equation}\tilde{W}_{\alpha ,\beta}(y):=\frac{W_{\sgn(y)\alpha,\beta }(4\pi |y|)}{\left|\Gamma(\frac{1}{2}+\beta+\sgn(y)\alpha)\Gamma(\frac{1}{2}-\beta+\sgn(y)\alpha)\right|^{1/2}} ,\quad y\in \R^\times,
\end{equation}
where the right-hand side is understood as zero if $\frac{1}{2}\pm \beta+\sgn(y)\alpha \in \Z_{\leq 0}$ (cf.\ \cite[eq.\ (22)]{BlomerHarcos08.2} where we are using a trivial phase function). As explained in \cite[Section 4]{BruggemanMotohashi05} the functions $\tilde{W}_{\alpha ,\beta}(y)$ are $L^2$-normalized with respect to the measure  $\frac{dy}{y}$ on $\R^\times$. %Here we are choosing different normalizations according to whether $\beta$ is imaginary (corresponding to principal series) or real (corresponding to complementary series and discrete series) so that the convenient formulas for the Fourier coefficients in (\ref{eq:fouriercoeff-sign}) and (\ref{eq:EisensteinFourier}) hold.
%\begin{equation}\tilde{W}_{\alpha ,\beta}(y):=\frac{W_{\sgn(y)\alpha,\beta }(4\pi |y|)}{\left|\Gamma(\frac{1}{2}+\beta+\sgn(y)\alpha)\Gamma(\frac{1}{2}-\beta+\sgn(y)\alpha)\right|^{1/2}},\quad y\in \R^\times,
% \end{equation}
%where the righthand side is understood as zero if $\frac{1}{2}\pm \beta+\sgn(y)\alpha\in \Z_{\leq 0}$. 
The above conditions on the Maa{\ss} form $f$ imply by \cite[eq.\ (4.36)]{DuFrIw02} that at any cusp $\mathfrak{a}\in \mathbf{P}^1(\Q)$ we have the Fourier expansion
\begin{equation}\label{fexp}
  f(\sigma_\mathfrak{a} z)= f_\mathfrak{a}(y)+\sum_{n\neq 0} a_{f,\mathfrak{a}}(n)  \tilde{W}_{\frac k2 ,it_f }(ny)e(nx),\qquad z=x+iy, \quad e(x)=e^{2\pi i x},
\end{equation}
with constant term of the shape
\begin{equation}\label{eq:constantterm}
  f_\mathfrak{a}(y)= \begin{cases}A_\mathfrak{a} y^{1/2+it_f }+B_\mathfrak{a} y^{1/2-it_f },& t_f \neq 0,\\ A_\mathfrak{a} y^{1/2}+B_\mathfrak{a} y^{1/2}\log y ,& t_f = 0, \end{cases}
\end{equation}
for some constants $A_\mathfrak{a},B_\mathfrak{a}\in \C$. For 
$$\alpha\in \Z, \quad\text{and}\quad \beta\in i\R \cup (0,1/2)\cup \{\ell-\tfrac{1}{2}:\ell\in \Z_{>0} \},$$ 
we have the following uniform bounds due to Blomer--Harcos \cite[eq.\ (24)-(26)]{BlomerHarcos08.2}:
\begin{align}\label{eq:whitbound1} \tilde{W}_{\alpha,\beta}(y)\ll |y|^{1/2}\left(\frac{|y|}{|\alpha|+|\beta|+1}\right)^{-1-\Re \beta}\exp\left(-\frac{|y|}{|\alpha|+|\beta|+1}\right),  \end{align}
and for $0<\eps<1/4$
\begin{align}\label{eq:whitbound2} \tilde{W}_{\alpha,\beta}(y)\ll_\eps |y|^{1/2-\mathfrak{d}_\beta-\eps}\left(|\alpha|+|\beta|+1\right)^{1+\mathfrak{d}_{\beta} },  \end{align}
where $\mathfrak{d}_\beta=\beta$ if $0<\beta<1/2$ (corresponding to the case of complementary series) and $\mathfrak{d}_\beta=0$ otherwise. For a Maa{\ss} form $f$ we define
\begin{equation}\label{eq:kappa} \mathfrak{d}_f:=\mathfrak{d}_{it_f}.\end{equation}  

We will be interested in the spectral theory of the $L^2$-space
$$ \LL^2(\Gamma_0(N),k):=\overline{\{f\in\AA(\Gamma_0(N),k): \langle f,f\rangle <\infty \}},$$
where 
\begin{equation}  \langle f,g\rangle =\int_{\Gamma_0(N)\backslash \Hb} f(z)\overline{g(z)}\frac{dxdy}{y^2}, \end{equation}
is the Petersson inner-product (suppressing $N$ in the notation). We will say that $f$ is \emph{cuspidal at the cusp $\mathfrak{a}$} if $f _\mathfrak{a}(y)=0$. Finally we say that $f $ is a \emph{Maa{\ss} cusp form} if it is cuspidal at all cusps $\mathfrak{a}\in \mathbb{P}(\Q)$. From~\cite[Corollary~4.4]{DuFrIw02} we know that Maa\ss{} cusp forms arise in two ways:
\begin{itemize}
\item $\Re(s_f )<1$ and $f $ is obtained from repeated applications of raising or lowering operators from a weight~$0$ Maa{\ss} cusp form. 
\item $s_f  = \ell/2$ with~$\ell\equiv k\pmod{2}$, and~$f $ is associated, through raising or lowering operators, to a form~$g$ of weight~$\ell$ for which~$z\mapsto y^{-\ell/2} g(z)$ is holomorphic. 
\end{itemize}

%The orthogonal complement $\EE(\Gamma,k)$ of $\CC(\Gamma,k)$ inside $\LL^2(\Gamma,k)$ is spanned (in the sense of continuous spectrum) by the  Eisenstein series $E_{\chi,k}(z,s)$ with 
%$$\{E^\ast_k(\ell z,s): \ell|N, \Re s=1/2 \},$$ 
%where $\Gamma=\Gamma_0(N)$ (see \cite{DFI} and \cite{Young}). 

We will abbreviate throughout $ a_f(n) = a_{f,\infty}(n)$. Moreover, we have by~\cite[eq.~(4.70)]{DuFrIw02} (taking into account our normalization) \begin{equation}\label{eq:fouriercoeff-sign}
  a_f(-n) = \epsilon_f  \frac{\left|\Gamma(s_f-\frac{k}{2})\Gamma(1-s_f-\frac{k}{2})\right|^{1/2} \Gamma(s_f+\frac{k}{2})}{\left|\Gamma(s_f+\frac{k}{2})\Gamma(1-s_f+\frac{k}{2}) \right|^{1/2} \Gamma(s_f-\frac{k}{2})}a_f(n)=:\kappa_f a_f(n).
\end{equation}
Note that $\kappa_f=0$ for $f$ associated to a holomorphic form, whereas in the remaining cases we have $s_f\notin \Z$ and so by % $|\kappa_f|=1$ when $s_f-1/2=it_f\in i\R$ using that $|\Gamma(s)|=|\Gamma(\overline{s})|$ and finally for $\sigma =s_f\in (1/2,1)$ (corresponding to exceptional Maa{\ss} forms) 
 the reflection formula
\begin{equation}\label{eq:kappaf}|\kappa_f|=\left(\frac{\left|\Gamma(s_f+\frac{k}{2})\Gamma(1-s_f-\frac{k}{2})\right|}{\left|\Gamma(s_f-\frac{k}{2})\Gamma(1-s_f+\frac{k}{2})\right|}\right)^{1/2} =\left(\frac{\left|\pi/\sin \pi(s_f+\frac{k}{2})\right|}{\left|\pi/\sin \pi(s_f-\frac{k}{2})\right|}\right)^{1/2}=1,  \end{equation}
using that $\epsilon_f\in \{\pm 1\}$ and the $2\pi$-periodicity of sine.
 %$|\kappa_f|=1$ for $k=0$ and for $k>0$ we get by bounds towards Selberg's conjecture \cite[Thm.\ 11.6]{Iw} that $\sigma\in (1/2,3/4)$ and so 
%\begin{equation}\label{eq:kappaf}|\kappa_f|=\left(\frac{\left|\Gamma(\sigma+\frac{k}{2})\Gamma(1-\sigma-\frac{k}{2})\right|}{\left|\Gamma(\sigma-\frac{k}{2})\Gamma(1-\sigma+\frac{k}{2})\right|}\right)^{1/2}=\left(\frac{\left|(\sigma+\frac{k}{2}-1)\cdots (\sigma-\frac{k}{2})\right|}{\left|(-\sigma+\frac{k}{2})\cdots (1-\sigma-\frac{k}{2})\right|}\right)^{1/2}\in \left(\frac{1}{\sqrt{3}},\sqrt{3}\right).  \end{equation}
%For~$s_f  = \ell/2$ with~$\ell\equiv k\pmod{2}$, this says that~$a(-n) = 0$ for~$n>0$.
\subsubsection{Eisenstein series}
Examples of non-cuspidal Maa{\ss} forms of even weight $k$ include the Eisenstein series. These are not $L^2$-integrable but will show up in the spectral decomposition of $\LL^2(\Gamma_0(N),k)$. There are two natural bases for the space of Eisenstein series. The first basis is parametrized in terms of the $\Gamma_0(N)$-equivalence classes of cusps $\mathfrak{a}\in \mathbf{P}^1(\Q)$; 
$$E_{\mathfrak{a},k}(z,s):=\sum_{\gamma\in \Gamma_\mathfrak{a}\backslash \Gamma_0(N)} j_{\sigma_\mathfrak{a}^{-1}\gamma}(z)^{-k}\Im (\sigma_\mathfrak{a}^{-1}\gamma z)^s, \quad \Re s>1,$$
which satisfies meromorphic continuation with no poles on $\Re s=1/2$. The other basis is in terms of characters. Let $ m^2|N$ and let $\chi$ be a Dirichlet character modulo $m$. Then the following Eisenstein series  belongs to $\AA(\Gamma_0(N),k)$;
\begin{equation}\label{eq:Eisensteinchar}E_{\chi,k}(z,s):=\frac{1}{2}\sum_{(c,d)=1} \left(\frac{cmz+d}{|cmz+d|}\right)^
{-k}\frac{(my)^s\chi(c)\chi(d)}{|cmz+d|^{2s}},\quad s\in \C, \end{equation}
see \cite[eq.\ (3.3)]{Young19}, with Laplace eigenvalue $s(1-s)$ and $Q_{s,k}E_{\chi,k}=\chi(-1)E_{\chi,k}$. %When $\chi$ is trivial we will simply write $E_{k}=E_{\chi,k}$. 
When $\chi \modulo m$ is primitive and $s=\tfrac{1}{2}+it$ then we have the following Fourier expansion \cite[Proposition 4.1]{Young19} for the Eisenstein series:%E_{\chi, k}^\ast(z,s):=
\begin{align}\nonumber \tfrac{i^k(m/\pi)^s}{\tau(\chi)} L(2s,\chi^2) E_{\chi, k}(z,s)=& i^k\delta_{m=1}\left(\pi^{-s} \zeta(2s)y^s+\pi^{s-1} \tfrac{\Gamma(1-s+\frac{k}{2})}{\Gamma(s+\frac{k}{2})}\zeta(2-2s)y^{1-s}\right)\\
\label{eq:EisensteinFourier}&+\sum_{n\neq 0} \frac{\lambda_{it}(n,\chi)}{|n|^{1/2}} \tfrac{|\Gamma(s+\sgn(n)\frac{k}{2})\Gamma(1-s+\sgn(n)\frac{k}{2})|^{1/2}}{\Gamma(s+\sgn(n)\frac{k}{2})}\tilde{W}_{\frac{k}{2},it}(n y)e(nx),  \end{align}
where $\delta_{m=1}=1$ if $\chi$ is the trivial character and $0$ otherwise, and
\begin{equation}
\lambda_{it}(n,\chi):=\chi(\sgn(n))\sigma_{it,it}(|n|,\chi,\chi),\end{equation} with
\begin{equation}\label{eq:sigmachichi} \sigma_{s_1,s_2}(n,\chi_1,\chi_2):=[(\chi_1\,|\cdot|^{-s_1})\ast (\overline{\chi_2}\,|\cdot|^{s_2})](n)=\sum_{ab=n}\chi_1(a)a^{-s_1}\overline{\chi_2(b)}b^{s_2}, \quad n\geq1. \end{equation}
It will be convenient for us to consider the following renormalization:
\begin{equation}\label{eq:EisensteinFouriertilde}
\tilde{E}_{\chi, k}(z,s):=\tfrac{\Gamma(s+\frac{k}{2})}{|\Gamma(s+\frac{k}{2})\Gamma(1-s+\frac{k}{2})|^{1/2}} \tfrac{(m/\pi)^s}{i^k\tau(\chi)} L(2s,\chi^2) E_{\chi, k}(z,s),
\end{equation}
which by the above satisfies $a_{\tilde{E}_{\chi, k}}(1)=1$, i.e.\ the first Fourier coefficient is one. This will be useful when studying \emph{additive twist $L$-series} of Eisenstein series in Section \ref{sec:addtwists}. 

Consider now the weight $2$ Eisenstein series $E_{\infty, 2}(z,s)$ for $\PSL_2(\Z)$ associated to the cusp at $\infty$. Then with our normalizations we have $R_0 E_{\infty, 0}(z,s)=sE_{\infty, 2}(z,s)$ and $R_0= 2iy\partial_z $, so that Hecke's limit formula \cite[p. 16]{DuImTo18} amounts to
\begin{align}\label{eq:limitformula} \lim_{s\rightarrow 0} \pi^{-s} \Gamma(s+1) \zeta(2s)E_{\infty, 2}(z,s)=\frac{\pi  y}{6} E^\ast_2(z),  \end{align}
where 
\begin{align}\label{eq:E2ast}E^\ast_2(z)=1-\frac{3}{\pi y}-24\sum_{n\geq 1}\sigma_1(n)e^{2\pi i nz},\end{align}
is the \emph{modified Eisenstein series of weight $2$}. Here $\sigma_1(n)=\sum_{d|n}d$ denotes the sum of divisors function. Note that by Hecke's limit formula $E^\ast_2(z)$ is a (non-holomophic) modular form of weight $2$ and level $1$, meaning that $E^\ast_2(\gamma z)=j(\gamma,z)^2E^\ast_2(z)$ for $\gamma\in \PSL_2(\Z)$.

%If $\mathfrak{a}=\tfrac{1}{uv}$ is a cusp of $\Gamma_0(N)$ where $v|N$ and $u\in \Z/(N/v,v)\Z$ then we have by \cite[Theorem 6.1]{Young19} 
%\begin{equation}\label{eq:Eiscc}E_{k,\frac{1}{uv}}(z,\tfrac{1}{2}+it)=\sum_{\substack{\ell m^2|N\\ m|(N/v,v)}}\sum_{\substack{\chi\modulo m\\ \text{primitive}}}c_{t,\ell,\chi}(u,v;N) E_{\chi,k}(\ell z,\tfrac{1}{2}+it), \end{equation}where 
%\begin{equation}\label{eq:bndscoef} |c_{t,\ell,\chi}(u,v;N)|\ll_\eps \frac{t^\eps N^\eps}{(N/v,v)^{1/2}N^{1/2}}.\end{equation}
\subsection{Lift to the group}
Consider the homogeneous space $\Gamma_0(N)\backslash \PSL_2(\R)$, which we identified with the unit tangent bundle $\mathbf{T}^1(Y_0(N))$, equipped with Haar measure $dg$, the Petersson inner product
$$\langle F,G\rangle = \int_{\Gamma_0(N)\backslash \PSL_2(\R)}F(g)\overline{G(g)}dg,$$ 
and the Laplace--Beltrami operator
$$\Delta= -y^2 \left(\frac{\partial^2}{\partial x^2}+\frac{\partial^2}{\partial y^2}\right)+y\frac{\partial^2}{\partial x\partial \theta},$$
in terms of the Iwasawa coordinates
$$g=\begin{psmallmatrix} 1 & x\\ 0 & 1 \end{psmallmatrix}\begin{psmallmatrix} y^{1/2} & 0\\ 0 & y^{-1/2} \end{psmallmatrix}\begin{psmallmatrix} \cos\theta & \sin\theta \\ -\sin\theta & \cos\theta \end{psmallmatrix}.$$
We denote by $\LL^2(\Gamma_0(N)\backslash \PSL_2(\R), dg)$ the associated $L^2$-space. For $k\in 2\Z$ we have a Hilbert space embedding  
\begin{align}\label{eq:weightkembedding}\LL^2(\Gamma_0(N),k) &\hookrightarrow \LL^2(\Gamma_0(N)\backslash \PSL_2(\R), dg),\\ f &\mapsto (g\mapsto j_g(i)^{-k}f(g.i)),\end{align}
with image equal to the weight $k$ isotypic component of $\LL^2(\Gamma_0(N)\backslash \PSL_2(\R), dg)$ under the (right regular) action of the maximal compact subgroup
\begin{equation} K=\mathrm{PSO}_2=\left\{\begin{psmallmatrix} \cos\theta & \sin\theta \\ -\sin\theta & \cos\theta \end{psmallmatrix}: 0\leq \theta<2\pi\right\}/\{\pm I\},\end{equation}  
i.e. for $F\in \LL^2(\Gamma_0(N)\backslash \PSL_2(\R), dg)$ in the image of the weight $k$ embedding  (\ref{eq:weightkembedding}) we have 
$$F\left(g \begin{psmallmatrix} \cos\theta & \sin\theta \\ -\sin\theta & \cos\theta \end{psmallmatrix}\right)=e^{ik\theta}F(g).$$
This embedding intertwines the action of the weight $k$ Laplacian $-\Delta_k$ and the Laplace--Beltrami operator $\Delta$. We say again that the image under this embedding are of \emph{weight $k$} and that the image of a  Maa{\ss} form is again a \emph{Maa{\ss} form}. We similarly extend the definition of \emph{cuspidal}, \emph{spectral datum} in the obvious way, and put 
$\mathbf{t}_F=\mathbf{t}_f$ if $F$ is the lift of $f$ (with notation as in (\ref{eq:tf})).
 
\subsubsection{Hecke operators}
For $n\geq 1$ we define the \emph{$n$th Hecke operator of level $N$} as the map 
$$T_n:\CC^\infty(\Gamma_0(N)\backslash \PSL_2(\R))\rightarrow \CC^\infty(\Gamma_0(N)\backslash \PSL_2(\R)),   $$
defined by (cf.\ \cite[equation (6.1)]{DuFrIw02})
\begin{equation}\label{eq:heckeoperator} (T_n F)(g)= \frac{1}{n^{1/2}}\sum_{\substack{ad=n\\(a,N)=1}}\sum_{0\leq b<d} F\left(\begin{psmallmatrix} a & b\\0&d\end{psmallmatrix}g\right).\end{equation}
%$$ (T_pf)(g)= \frac{1}{p^{1/2}}\sum_{\sigma\in \Gamma\backslash \Delta_p} f(\sigma g),$$
%where 
%$$\Delta_p=\{\gamma\in \GL_2(\Z): \det(\gamma)=p\}/\{\pm I\}.$$
We say that a Maa{\ss} form $F\in \CC^\infty(\Gamma_0(N)\backslash \PSL_2(\R))$ is a \emph{Hecke--Maa{\ss} form} if $F$ is an eigenfunction for all $T_n$ with $(n,N)=1$. For $\ell,N'\geq 1$ such that $\ell N'|N$, we define an operator
\begin{equation}\nu_{\ell,N'}:\CC^\infty(\Gamma_0(N')\backslash \PSL_2(\R))\rightarrow \CC^\infty(\Gamma_0(N)\backslash \PSL_2(\R)), \end{equation}
by
\begin{equation}\label{eq:nulevel} (\nu_{\ell,N'}F)(g):=F(\ell g)=F\left(\begin{psmallmatrix} \ell^{1/2}& 0 \\ 0 & \ell^{-1/2} \end{psmallmatrix} g\right),\quad g\in \PSL_2(\R). \end{equation}
%\begin{equation} \langle \nu_{\ell,N'}F,\nu_{\ell,N'}F \rangle  =\frac{\langle1,1\rangle }{\langle1,1\rangle_{N'}}\langle F,F \rangle_{N'}.\end{equation}
We say that a Hecke--Maa{\ss} form $F$ is a \emph{Hecke--Maa{\ss} newform of level $N$} if it is not contained in the span of the images of the maps $\nu_{\ell,N'}$ with  $N'<N$ and $\ell N'|N$ and normalized so that the first Fourier coefficient (at $\infty$) is $1$. Note that the rescaled Eisenstein series $\tilde{E}_{\chi,k}(z,s)$ as defined in (\ref{eq:EisensteinFouriertilde}) with $\chi\mod m$ a primitive Dirichlet character is a Hecke--Maa{\ss} newform with Hecke eigenvalues $\lambda_{it}(n,\chi)$. For $k\in 2\Z$ we denote the set of cuspidal newforms by 
$$\BB^\ast_k(\Gamma_0(N)):=\{F\in \CC^\infty(\Gamma_0(N)\backslash \PSL_2(\R)): \text{cuspidal Hecke--Maa{\ss} newform of weight $k$}\}.$$ 
Let $F\in \BB^\ast_k(\Gamma_0(N))$ be a Hecke--Maa{\ss} newform and let $L(s,\sym^2 F)$ denote the symmetric square $L$-function satisfying
\begin{equation}\label{eq:HoffsteinLockhart}  (N+|\mathbf{t}_F|)^{-\eps} \ll_\eps L(1,\sym^2 F)\ll_\eps (N+|\mathbf{t}_F|)^\eps,\end{equation}
for any $\eps>0$ by the classical estimates of Hoffstein--Lockhart \cite[Theorem 0.2]{HoffLock94} and \cite[Proposition 19.6]{DuFrIw02}. It follows from the Rankin--Selberg method \cite[Section 2.1]{BlomerHarcos08.2} that there exists a constant $c_N>0$ depending only on the level $N$ such that
\begin{equation}\label{eq:RankinSelberg} \langle F,F \rangle= c_N L(1,\sym^2 F), \end{equation}
using here that $\tilde{W}_{\frac{k}{2} ,it_f}$ is $L^2$-normalized with respect to the standard Haar measure $\tfrac{dy}{y}$ on $\R^\times$ as mentioned above. The upshot is that since the level is fixed the Hecke normalization and the $L^2$-normalization are essentially equivalent (up to a factor of size $|\mathbf{t}_F|^\eps$).      

%\subsubsection{Spectral theorem}
The spectral theorem \cite[Proposition 4.1+4.2]{DuFrIw02} states that for a smooth function \\$\phi:\Gamma_0(N)\backslash \PSL_2(\R)\rightarrow \C$ of compact support we have the pointwise equality:
\begin{align}\label{eq:spectralthm}\phi(g)= \frac{\langle \phi,1\rangle}{\langle 1, 1\rangle}+
\sum_{F\in \BB} \langle \phi,F\rangle F(g)+\frac{1}{4\pi}\sum_{k\in 2\Z}\sum_\mathfrak{a} \int_{\R} \langle \phi, E_{\mathfrak{a},k,it}\rangle E_{\mathfrak{a},k,it}(g) dt,    
\end{align}
where $\BB$ is an orthonormal basis for the cuspidal subspace of $\LL^2(\Gamma_0(N)\backslash \PSL_2(\R),dg)$, $\mathfrak{a}$ runs over $\Gamma_0(N)\backslash\mathbb{P}^1(\Q)$ (i.e. inequivalent cusps of $\Gamma_0(N)$) and 
$$E_{\mathfrak{a},k,it}(g):=j_g(i)^{-k}E_{\mathfrak{a},k}(g.i , \tfrac{1}{2}+it),$$
with $E_{\mathfrak{a},k}$ defined as above. Here we are using that there is no non-constant residual spectrum for $\Gamma_0(N)$. It is a classical fact of Atkin--Lehner theory that the cuspidal part of the $L^2$-space is spanned by cuspidal Hecke--Maa{\ss} forms and by \cite[Lemma 9]{BlomerMilicevic15} we have the following orthonormal basis for the cuspidal subspace:
\begin{equation}\label{eq:BB0}\BB_0(\Gamma_0(N)):=\bigcup_{k\in 2\Z} \bigcup_{\ell N'|N}\{F_\ell: F\in\BB^\ast_k(\Gamma_0(N')\},\end{equation}
where
\begin{align} \label{eq:coefONB}F_\ell(g):= \sum_{d|\ell} c_{F}(d,\ell)F\left(dg\right), \end{align}
with $c_{F}(d,\ell)\in \C$ satisfying $ |c_{F}(d,\ell)|\ll_{N,\eps} |\mathbf{t}_F|^\eps$ (this last bound follows from the explicit formula for the basis given in \cite[Proposition 2.6]{ILS00} and the equality (\ref{eq:RankinSelberg}) in view of the estimate (\ref{eq:HoffsteinLockhart})). Similarly by combining \cite[Theorem 8.1]{Young19} with the last equation in Section 8.2 of \emph{loc.\ cit.}, the Eisenstein part of the spectrum has an orthonormal basis (in the sense of continuous spectrum) given by the packets %Section 8.4 Young
\begin{equation}\label{eq:BBE}\BB_e(\Gamma_0(N)):=\bigcup_{m^2|N}\,\,\bigcup_{\substack{\chi\modulo m\\\text{primitive}}}\, \bigcup_{\ell|Nm^{-2}} \{E_{\chi,k,it,\ell}: t\in \R\} ,\end{equation}
where
\begin{equation}\label{eq:Eisensteinlift}E_{\chi,k,it,\ell}(g):=\sum_{d|\ell}c_{t,\chi}(d,\ell)j_g(i)^{-k}\tilde{E}_{\chi,k}\left(d\,g.i,1/2+it\right),\end{equation}
and the coefficients satisfy $|c_{t,\chi}(d,\ell)|\ll_{N,\eps} (|t|+1)^\eps $, which  follows directly from the last equation in \cite[Section 8.2]{Young19}, the definition (\ref{eq:EisensteinFouriertilde}) of $\tilde{E}_{\chi,k}$ and the standard bound $|L(1+2it,\chi^2)|\gg_\eps (|t|+1)^{-\eps}$ (note also that $\Gamma(s+\frac{k}{2})|\Gamma(s+\frac{k}{2})\Gamma(1-s+\frac{k}{2})|^{-1/2}$ has absolute value $1$ for $\Re s=1/2$). By similar considerations the constant term (\ref{eq:constantterm}) in the Fourier expansion at $\infty$ of a basis elements $E\in \BB_e(\Gamma_0(N))$ satisfies for all $\eps>0$ that 
\begin{equation}\label{eq:boundtilde}
|A_{E}|,|B_{E}|\ll_{N,\eps} |\mathbf{t}_E|^\eps.
\end{equation}
\subsection{(Co)homology of modular surfaces}\label{sec:cohom}
Let $N$ be prime  and consider the level $N$ modular surface $Y_0(N)=\Gamma_0(N)\backslash \Hb$. Then the compactification $X_0(N)=\overline{Y_0(N)}$ is a compact Riemann surface of genus $g=\frac{N}{12}+O(1)$ (see e.g. \cite[Proposition 1.40]{Sh94}). Recall that the integral singular homology group sits as a lattice inside the real homology (see e.g. \cite[Chapter 2]{Hatcher02})  
$$H_1(Y_0(N),\Z)\subset H_1(Y_0(N),\R)\cong \R^{2g+1},$$
which in turn sits inside the complex homology group $H_1(Y_0(N),\C)$, and similarly for cohomology. We have the cap product pairing
\begin{equation}\label{eq:cap} \langle\cdot, \cdot \rangle_\mathrm{cap}: H_1(Y_0(N),\Z)\times H^1(Y_0(N),\Z)\rightarrow \Z,\end{equation}
between integral homology and cohomology which identifies $H^1(Y_0(N),\Z)$ with the linear dual $H_1(Y_0(N),\Z)^\ast$. This extends by linearity to a pairing between real and complex (co)homology.  
%Given a basis $B$ of $H_1(Y_0(p), \R)$ we denote by $ B^{\ast}\subset H^1(Y_0(p), \R)$ the dual basis of $B$ with respect to the cap product pairing as in (\ref{eq:cap}). 

Given a harmonic differential $1$-form on $Y_0(N)$ we get a complex cohomology class defined by integration against cycles. Explicitly, if $f\in \mathcal{M}_2(\Gamma_0(N))$ is a holomorphic form of weight $2$ and level $N$ then 
$$  2\pi i f(z)dz \quad \text{and}\quad \overline{2\pi i f(z)dz} $$
are  harmonic $1$-forms on $Y_0(N)$. It is a consequence of the Hodge theorem that the associated cohomology classes generate $H^1(Y_0(N),\C)$. As the Hecke operators are given in terms of correspondences on $Y_0(N)$, we get an action of the Hecke algebra (of level $N$) on the (co)homology groups (see \cite[Section 3.2.1]{Nor22} for explicit formulas for the actions).  
%the map $\pi_1(Y_0(p))\rightarrow H_1(Y_0(p),\Z)$ amounts to %{\color{red}change to $\{ z, \gamma z\}$?}
A Hecke eigenbasis for $H^1(Y_0(N),\R)$ is given by 
\begin{align}\label{eq:Heckebasis} B_\mathrm{Hecke}(N):=\{\omega_f^{\epsilon}: f\in \mathcal{B}_N,\epsilon\in \{\pm\}\}\cup\{ \omega_{E}(N) \}, \end{align}
where 
\begin{equation}\label{eq:omegapmclass} \omega_f^{\pm}= \left( 2\pi i f(z)dz\pm \overline{2\pi i f(z)dz}\right)/(1+i \pm \overline{1+i})\in H^1(Y_0(N), \R),\end{equation}
with $\mathcal{B}_N:=\{f_1,\ldots, f_g\}\subset \mathcal{S}_2(\Gamma_0(N))$ a basis of holomorphic cuspidal Hecke eigenforms of weight $2$ and level $N$ normalized so that the first Fourier coefficient is $1$, and 
\begin{equation}\label{eq:eisensteinclass} \omega_{E}(N):=E_{2,N}(z) dz\in H^1(Y_0(N), \R),\end{equation}
is the normalized \emph{Eisenstein class} defined from the  Eisenstein series of weight $2$ and level $N$;
\begin{equation}\label{eq:EislevelN} E_{2,N}(z):=\frac{NE^\ast_2(Nz)-E^\ast_2(z)}{N-1}=1+\frac{24}{N-1}e^{2\pi i z}+\ldots,\end{equation} 
where $E^\ast_2$ denotes the modified Eisenstein series as defined in (\ref{eq:E2ast}), which is a non-holomorphic modular form of weight 2. By the Fourier expansion of $E_2^\ast$  we have that $E_{2,N}$ is holomorphic so that indeed $E_{2,N}\in \mathcal{M}_2(\Gamma_0(N))$ and we have for $(n,N)=1$ that 
$$T_n \omega_{E}(N)= \sigma_1(n)\omega_{E}(N)=\left(\Sigma_{d|n}d\right)\omega_{E}(N).$$ 
Since the cap product pairing is Hecke equivariant, one obtains a Hecke eigenbasis for the real homology by taking the dual basis of $B_\mathrm{Hecke}(N)$ with respect to the cap product pairing (\ref{eq:cap}); 
$$\{v_f^\epsilon: f\in\mathcal{B}_N ,\epsilon\in\{\pm\} \}\cup \{v_E(N)\}$$
where
$$\langle v^\pm_f, \omega_f^\pm \rangle_\mathrm{cap}=\langle v_E(N), \omega_E(N) \rangle_\mathrm{cap}=1. $$
One can check by hand that in fact the dual of the Eisenstein element is given by  
\begin{equation}\label{eq:eisensteinclasshom}v_{E}(N)=[\text{class of }\{t+i: 0\leq t<1\}]\in H_1(Y_0(N),\Z), \end{equation}
since the constant term of $E_{2,N}$ is $1$. Notice that geometrically $v_{E}(N)$ is the class of a simple loop around the cusp at $\infty$ and is equal to the image of $\begin{psmallmatrix} 1 & 1 \\ 0 & 1\end{psmallmatrix}$ inside the  abelinization $\Gamma_0(N)^\mathrm{ab}$ modulo torsion under the isomorphism (\ref{eq:homabelian}). 

\subsection{Weyl's Criterion}\label{sec:Weylcriterion}
The abstract setting in Theorem \ref{thm:first} is as follows: we have a sequence of probability measures $\mu_1,\mu_2,\ldots$ on $\mathbf{T}^1(Y_0(N))=\Gamma_0(N)\backslash \PSL_2(\R)$ which we want to prove converge in the weak$^\ast$ topology to the Haar measure $dg$. This means that for a smooth and compactly supported function $\phi:\mathbf{T}^1(Y_0(N))\rightarrow \C$ we want to prove that
\begin{equation} \label{eq:WC}\int_{\mathbf{T}^1(Y_0(N))}\phi(g) d\mu_n(g)\rightarrow \int_{\mathbf{T}^1(Y_0(N))}\phi(g) dg,\quad \text{as }n\rightarrow \infty.\end{equation}
By the spectral theorem (\ref{eq:spectralthm}) and the remarks following it, one obtains Weyl's Criterion.
\begin{lemma}[Weyl Criterion]\label{lem:WC}
Let $N\geq 1$ be a fixed integer and let $\mu_1,\mu_2,\ldots$ be a sequence of probability measures on $\mathbf{T}^1(Y_0(N))$. Let $A\geq 1$ be a constant and $\Phi: \Z_{>0}\rightarrow \R_{\geq 0}$ a function satisfying $\Phi(n)\rightarrow 0$ as $n\rightarrow \infty$. Assume that the following is satisfied 
$$\left|\int_{\mathbf{T}^1(Y_0(N))}F(g) d\mu_n(g)\right|\leq |\mathbf{t}_F|^A \Phi(n)$$
for $F\in \BB_0(\Gamma_0(N))$, and 
$$\left|\int_{\mathbf{T}^1(Y_0(N))}E(g) d\mu_n(g)\right|\leq |\mathbf{t}_E|^A \Phi(n)$$
 for $E\in \BB_e(\Gamma_0(N))$. Here $\mathbf{t}_F$ (resp. $\mathbf{t}_E$) denotes the spectral datum for the Hecke--Maa{\ss} cusp form $F$ (resp. the Eisenstein series $E$) as defined in (\ref{eq:tf}). Then for any smooth and compactly supported function $\phi:\mathbf{T}^1(Y_0(N))\rightarrow \C$ and $\eps>0$, we have
 $$ \int_{\mathbf{T}^1(Y_0(N))}\phi(g) d\mu_n(g)= \int_{\mathbf{T}^1(Y_0(N))}\phi(g) dg+O_{A,\phi,\eps}(\Phi(n)^{1-\eps}),\quad \text{as }n\rightarrow \infty. $$
\end{lemma}
\begin{proof}
By Fourier expansion on the maximal compact we get  
$$\int_{\mathbf{T}^1(Y_0(N))}\phi(g) d\mu_n(g)=\sum_{k\in 2\Z}\int_{\mathbf{T}^1(Y_0(N))}\phi_k(g) d\mu_n(g),$$
in terms of $\SO_2(\R)$ isotypic components; 
$$\phi_k(g):=\frac{1}{2\pi}\int_{0}^{2\pi}\phi\left(g\begin{psmallmatrix}\cos \theta& -\sin \theta\\ \sin \theta&\cos\theta \end{psmallmatrix}\right)e^{-\tfrac{k}{2}\theta i}d\theta.$$
By partial integration we get
$$|\phi_k(g)|\ll_{m,\phi} (|k|+1)^{-m},\quad m\geq 1,$$
which means that we can truncate the sum at $k\ll \Phi(n)^{-\eps}$ at a negligible cost (for $n$ large enough). For the remaining terms we spectrally expand each $\phi_k$ in terms of the weight $k$ subspaces of $\BB_0(Y_0(N))\cup\BB_e(Y_0(N))$ using the spectral theorem (\ref{eq:spectralthm}) and insert the bounds assumed for the Weyl sums. Now for any $m\geq 1$ and $F\in \BB_0(Y_0(N))\cup\BB_e(Y_0(N))$ we have  
$$|\langle F, \phi_k\rangle|= |\lambda_F|^{-m}|\langle F, \Delta^m \phi_k\rangle|=|\lambda_F|^{-m}|\langle F, (\Delta^m \phi)_k\rangle|\ll_{\phi,m} (|t_F|+1)^{-2m}, $$
by self-adjointness and invariance of the Laplace-Beltrami operator. Thus we may truncate the sum at $|t_F|\ll \Phi(n)^{-\eps}$ (for $n$ large enough) at a negligible cost by the Weyl law \cite[Corollary 11.2]{Iw} (actually any polynomial bound on the measure of the spectrum with bounded spectral data suffices). For the remaining terms we get the wanted by the assumptions on the Weyl sums.
\end{proof}
\begin{remark}Note that complex conjugation defines an isomorphism 
$$\AA(\Gamma_0(N),k)\xrightarrow{\sim} \AA(\Gamma_0(N),-k),$$
which implies that we may assume that the weight satisfies $k\geq 0$ when applying Weyl's criterion.\end{remark}

We will now derive a Weyl type criterion in the setting of Theorem \ref{thm:second}. Given a finite dimensional real vector space $V$, define the \emph{(topological) sphere associated to $V$} as
$$\mathbf{S}(V):= (V-\{0\})/\R_{>0},$$
equipped with the quotient topology, and denote by $v\mapsto \overline{v}$ the canonical map $V-\{0\}\rightarrow \mathbf{S}(V)$. Given a sequence of elements $v_0,v_1,v_2,\ldots \in V-\{0\}$ we want to give a criterion which implies that 
\begin{equation}\label{eq:wc}\overline{v_n}\rightarrow \overline{v_0},\quad \text{as }n\rightarrow \infty,\end{equation}
in $\mathbf{S}(V)$. Given a basis $B^\ast\subset V^\ast$ of the dual vector space $V^\ast$ of $V$, we define the following norm on $V$:
$$ |\!|v|\!|_{B^\ast}:=\max_{\omega\in B^\ast}|\langle v, \omega\rangle_V|,$$
where $\langle \cdot,\cdot \rangle_V:V\times V^\ast\rightarrow \R$ denotes the canonical pairing. Then the corresponding Weyl criterion is given by the following.
\begin{lemma}\label{lem:WCvector} Let $v_0,v_1,\ldots$ be a sequence of non-zero elements in a finite dimensional real vector space $V$. Let $B^\ast $ be a basis for the dual $V^\ast$ and $\Phi:\Z_{>0}\rightarrow \R_{\geq 0}$ a function such that
\begin{equation}\label{eq:vnnorm}\left|\langle v_n,\omega\rangle_V- \langle v_0,\omega\rangle_V\right|\leq \Phi(n)|\!|v_0|\!|_{B^\ast} ,\end{equation}
for all $\omega\in B^\ast$. Then we have 
\begin{align}\label{eq:WeylV}\left|\!\left|\frac{v_n}{|\!|v_n|\!|_{B^\ast}}-\frac{v_0}{|\!|v_0|\!|_{B^\ast}}\right|\!\right|_{B^\ast}\ll \Phi(n) .\end{align}
If furthermore $\Phi(n)\rightarrow 0$ as $n\rightarrow \infty$, then 
\begin{equation}\label{eq:Weylfinalconclusion}\overline{v_n}\rightarrow \overline{v_0} \text{ in }\mathbf{S}(V),\text{ as }n\rightarrow \infty.\end{equation}   
%\ast}\ll \Phi(n) .$$
 \end{lemma}
 \begin{proof}%The left-hand side of (\ref{eq:WeylV}) is bounded by $2$ by the triangle inequality. Thus we may restrict to those $n$ such that 
 %$$  |\langle v_n,\omega\rangle_V-\langle v_0,\omega\rangle_V|\leq \tfrac{1}{2}|\!|v_0|\!|_{B^\ast},$$
The condition (\ref{eq:vnnorm}) implies that  
$$|\!|v_n|\!|_{B^\ast}=|\!|v_0|\!|_{B^\ast}(1+O(\Phi(n))).$$%\gg|\!|v_0|\!|_{B^\ast} .$$
Thus by the triangle inequality
$$\left|\!\left|\frac{v_n}{|\!|v_n|\!|_{B^\ast}}-\frac{v_0}{|\!|v_0|\!|_{B^\ast}}\right|\!\right|_{B^\ast} \leq \left|\!\left|\frac{v_n-v_0}{|\!|v_0|\!|_{B^\ast}}\right|\!\right|_{B^\ast}+|\!|v_n|\!|_{B^\ast}\frac{||\!|v_n|\!|_{B^\ast}-|\!|v_0|\!|_{B^\ast}|}{|\!|v_n|\!|_{B^\ast}|\!|v_0|\!|_{B^\ast}}\ll \Phi(n). $$
We have a continuous map
$$V-\{0\}\rightarrow \{v\in V: |\!|v|\!|_{B^\ast}=1\}\subset V,\quad v\mapsto \tfrac{v}{|\!|v|\!|_{B^\ast}},$$
which factors through $\mathbf{S}(V)$ and the resulting map  
$$ \mathbf{S}(V)\xrightarrow{\sim} \{v\in V: |\!|v|\!|_{B^\ast}=1\},   $$
is a homeomorphism. Now the topology on $V$ is induced by $|\!|\cdot |\!|_{B^\ast}$ which implies that indeed the convergence $\overline{v_n}\rightarrow \overline{v_0}$ is equivalent to the left-hand side of (\ref{eq:WeylV}) converging to $0$.  
 \end{proof}
In our case we will have $V=H_1(Y_0(N),\R)$ and identify $V^\ast$ with the real cohomology $H^1(Y_0(N),\R)$ by the cap product pairing and let $B^\ast=B_\mathrm{Hecke}(N)$ be the Hecke basis of cohomology. 
\begin{remark}
Note here that one could apply any norm on $\R^{\dim V}$ (in place of the sup norm) to the vector $(\langle v, \omega\rangle_V)_{\omega\in B^\ast}$ and a similar conclusion would still hold, since all norms on finite dimensional vector spaces are equivalent. 
\end{remark}

\section{Additive twist $L$-series of automorphic forms}\label{sec:addtwists}
Let $f\in \AA(\Gamma_0(N), k)$ be a Maa{\ss} form of even weight $k$ with  Fourier expansion at $\infty$
\begin{equation}\label{eq:f(z)Fourierexp}f(z)= f_\infty(y)+\sum_{n\neq 0} a_f(n) \tilde{W}_{\frac{k}{2},it_f}(ny)e(nx),\end{equation}
and assume that for some $\delta>0$ the Fourier coefficients satisfy the bound 
\begin{equation}
\label{eq:Fourierbound} \sum_{0<|n|<X} |a_f(n)|^2\ll_f X^{2\delta},\quad \text{as }X\rightarrow \infty. 
\end{equation}
In particular, this is satisfied if $f$ is $L^2$-integrable \cite[Thm.\ 3.2]{Iw} (with any $\delta>0$) or if $f$ is a linear combination of Eisenstein series with spectral parameter $s_f\in \C$, $\Re s_f\geq 1/2$ \cite[eq.\ (6.19)]{Iw} (with $\delta=\Re s_f+1/2$). Denote by $F:\mathbf{T}^1(Y_0(N))\rightarrow \C$ the lift of $f$ to the unit tangent bundle. Given $x\in \Q$ we define the \emph{additive twist $L$-series of $f$} (and of $F$)  as  
\begin{equation}L(s,F,x)=L(s,f,x):= \sum_{n\geq 1} \frac{a_f(n)e(nx)}{n^{s-1/2}},\quad  \Re s>1+\delta,   \end{equation}
which admit meromorphic continuation to $\Re s>\Re it_f$ and for $L^2$-integrable Maa{\ss} forms and for Eisenstein series with spectral parameter $s=1/2+it$ is regular at $s=1/2$, see \cite[Lemma 5.4]{DrNo22}. Recall from e.g.\ \cite[eq.\ (12)]{BlomerHarcos08.2} that for $f$ a Hecke--Maa{\ss} newform we have 
\begin{equation}\label{eq:FChecke}a_f(n)=\frac{\lambda_f(n)}{n^{1/2}}\end{equation}
and so
\begin{equation}\label{eq:defadditive}L(s,f,x)= \sum_{n\geq 1} \frac{\lambda_f(n)e(nx)}{n^{s}}, \quad  \Re s>1+\delta,\end{equation}
where $\lambda_f(n)$ denotes the $n$th Hecke eigenvalue of $f$. Notice also that for $\ell N'|N$, $f\in \AA(\Gamma_0(N'), k)$ and $\nu_{\ell,N'}$ defined as in (\ref{eq:nulevel}) we have the following formula for the Fourier coefficients:
\begin{equation}\label{eq:oldformFC}a_{\nu_{\ell,N'}f}(n)=\delta_{\ell|n} a_f(\tfrac{n}{\ell}),\end{equation}
where $\delta_{\ell|n}=1$ if $\ell$ divides $n$ and $\delta_{\ell|n}=0$ otherwise, which implies that 
\begin{equation}\label{eq:oldformAT} L(s,\nu_{\ell,N'}f,x)=\ell^{1/2-s} L(s,f,\ell x). \end{equation}
This will be useful when dealing with the additive twist $L$-series of the bases defined in (\ref{eq:BB0}) and (\ref{eq:BBE}). We denote by
\begin{equation}\label{eq:MellinWhit} \gamma_{\alpha,\beta}(s):= \int_0^\infty \tilde{W}_{\alpha,\beta}(y) y^{s-1/2}\frac{dy}{y}, \end{equation}
the Mellin transform of Whittaker functions which converge absolutely for $\Re s>\mathfrak{d}_\beta$ by (\ref{eq:whitbound1}) and (\ref{eq:whitbound2}). Recall the definition of the constant term $f_\infty\left(y\right)$ at $\infty$ as in (\ref{eq:constantterm}).  

%In the case where $\alpha=\tfrac{k}{2}$ and $\beta=\frac$ 
%One way to show analytic continuation (and functional equations) is via the following relation.
\begin{prop}\label{prop:linetoadd}
Let $\gamma\in \Gamma_0(N)$ be a hyperbolic element represented by a matrix in $\SL_2(\Z)$ with lower left entry equal to $q\geq 2$. Let $f\in \AA(\Gamma_0(N),k)$ be a Maa{\ss} form of even weight $k$, level $N$ and spectral parameter 
\begin{equation}\label{eq:spectralparameterassump}it_f\in i\R\cup(0,1/2)\cup\{\ell-\tfrac{1}{2}:\ell\in \Z_{>0}\},\end{equation}
and put
$$\gamma^\pm_s(\mathbf{t}_f)=(\kappa_f)^{(1\mp 1)/2}q^{s-1/2}\gamma_{\pm\frac{k}{2},it_f }(s),$$
 with $\mathbf{t}_f$ the spectral datum of $f$, $\kappa_f$ as defined in (\ref{eq:fouriercoeff-sign})\footnote{Here it is understood that $(\kappa_f)^{(1-1)/2}=1$ in the case $\kappa_f=0$.} and $\gamma_{\pm\frac{k}{2},it_f }(s)$ as in (\ref{eq:MellinWhit}). Assume that $f$ satisfies (\ref{eq:Fourierbound}). Then  for $\Re s>\Re it_f$ we have the following equality of meromorphic functions:
   \begin{align} 
\nonumber &\int_{1}^\infty \left(f\left(\gamma \infty+i\tfrac{y}{q}\right)-f_\infty\left(\tfrac{y}{q}\right)\right)y^{s-1/2}\frac{dy}{y}+(-1)^{k/2}\int_{1}^\infty \left(f\left(\gamma^{-1} \infty+i\tfrac{y}{q}\right)-f_\infty\left(\tfrac{y}{q}\right)\right)y^{1/2-s}\frac{dy}{y}\\
\nonumber &=\gamma^+_s(\mathbf{t}_f) L(s, f,\gamma \infty)+ \gamma^-_s(\mathbf{t}_f)L(s, f,-\gamma \infty)\\
\label{eq:Eichlerint} &\qquad \qquad +A_f q^{-1/2-it_f}\left(\frac{1}{s+it_f}+\frac{(-1)^{k/2}}{1-s+it_f}\right)+ B_f q^{-1/2+it_f}\left(\frac{1}{s-it_f}+\frac{(-1)^{k/2}}{1-s-it_f}\right) ,
 \end{align} 
where in the case $t_f=0$ the last term is replaced by 
$$B_f q^{-1/2}\left(-\frac{1}{s^{2}}-\frac{\log q}{s}
+(-1)^{k/2}\!\left(-\frac{1}{(s-1)^{2}}+\frac{\log q}{s-1}\right)
\right).$$
Furthermore, the left-hand side of (\ref{eq:Eichlerint}) defines an analytic function in $s\in \C$.%for $it_f\notin \frac{1}{2}+\Z$, where $\epsilon_f$ is defined as in (\ref{}), and holomorphic (what about raised forms??)
 %$$\gamma^+_f(s)=2^{k/2}\Gamma(\tfrac{k}{2}+s-1/2),\quad \gamma^-_f(s)=0. $$ 
\end{prop}

\begin{proof}
Note first of all that by inserting the bound (\ref{eq:whitbound1}) into the Fourier expansion (\ref{fexp}) and using the bound for the Fourier coefficients (\ref{eq:Fourierbound}) the left-hand side of (\ref{eq:Eichlerint}) defines an entire function in $s\in \C$. By weight $k$ automorphy of $f$, we have
\begin{align} 
\nonumber f\left(\gamma^{-1}\infty+i\tfrac{y}{q}\right)= f\left(\gamma^{- 1}\left(\gamma\infty+i\tfrac{y^{-1}}{q}\right)\right)&= j_{\gamma^{-1}}(\gamma\infty+i\tfrac{y^{-1}}{q})^k f\left(\gamma\infty+i\tfrac{y^{-1}}{q}\right)\\
\label{eq:modularity}&= (-1)^{k/2} f\left(\gamma\infty+i\tfrac{y^{-1}}{q}\right).\end{align}
Thus we conclude by the change of variable $y\mapsto 1/y$ that 
\begin{align}
&\int_{1}^\infty \left(f\left(\gamma^{-1} \infty+i\tfrac{y}{q}\right)-f_\infty\left(\tfrac{y}{q}\right)\right)y^{1/2-s}\frac{dy}{y}\\
&=(-1)^{k/2}\int_{1}^\infty \left(f\left(\gamma \infty+i\tfrac{y^{-1}}{q}\right)-(-1)^{k/2}f_\infty\left(\tfrac{y}{q}\right)\right)y^{1/2-s}\frac{dy}{y}\\
&= (-1)^{k/2} \int_{0}^1 \left(f\left(\gamma \infty+i\tfrac{y}{q}\right)-(-1)^{k/2}f_\infty\left(\tfrac{y^{-1}}{q}\right)\right)y^{s-1/2}\frac{dy}{y}.
\end{align}
In view of the bounds (\ref{eq:whitbound1}) and (\ref{eq:whitbound2}), (\ref{eq:Fourierbound}) and the Fourier expansion (\ref{fexp}), we conclude that for $\Re s\gg_f 1$ the  left-hand side of (\ref{eq:Eichlerint}) can be expressed as the sum of the following two absolutely convergent integrals:
\begin{align}
&\int_0^\infty \left(f\left(\gamma \infty+i\tfrac{y}{q}\right)-f_\infty\left(\tfrac{y}{q}\right)\right)y^{s-1/2}\frac{dy}{y}+\int_0^1 \left(f_\infty\left(\tfrac{y}{q}\right)-(-1)^{k/2}f_\infty\left(\tfrac{y^{-1}}{q}\right)\right)y^{s-1/2}\frac{dy}{y}\\
&= q^{s-1/2}\int_0^\infty \left(f\left(\gamma \infty+iy\right)-f_\infty\left(y\right)\right)y^{s-1/2}\frac{dy}{y}\\
&+A_f q^{-1/2-it_f}\left(\frac{1}{s+it_f}+\frac{(-1)^{k/2}}{1-s+it_f}\right)+ B_f q^{-1/2+it_f}\left(\frac{1}{s-it_f}+\frac{(-1)^{k/2}}{1-s-it_f}\right),
\end{align}
by inserting (\ref{eq:constantterm}), with the last term replaced by 
$$B_f q^{-1/2}\left(-\frac{1}{s^{2}}-\frac{\log q}{s}
-(-1)^{k/2}\!\left(\frac{1}{(s-1)^{2}}-\frac{\log q}{s-1}\right)\right),$$
when $t_f=0$. Now by inserting the Fourier expansion and interchanging sum and integral (cf. proof of \cite[Lemma 5.3]{DrNo22})  we get for $\Re s\gg_f 1$
\begin{align}\nonumber \int_0^\infty \left(f\left(\gamma \infty+iy\right)-f_\infty\left(y\right)\right)y^{s-1/2}\frac{dy}{y}&=\sum_{n\neq 0} a_f(n)e(n\gamma \infty)\int_0^\infty \tilde{W}_{\frac{k}{2},it_f}(ny) y^{s-1/2}\frac{dy}{y} \\
\label{eq:useful}&= \gamma_{\frac{k}{2},it_f}(s) L(s,f,\gamma \infty) + \kappa_f \gamma_{- \frac{k}{2},it_f}(s) L(s,f,-\gamma \infty),\end{align}
using that $a_f(-n)=\kappa_f a_f(n)$ by (\ref{eq:fouriercoeff-sign}) and $\tilde{W}_{\alpha,\beta}(-y)=\tilde{W}_{-\alpha,\beta}(y)$. Now the result follows for $\Re s>\Re it_f$ by meromorphic continuation\footnote{Note that in fact the formula (\ref{eq:Eichlerint}) \emph{gives} the meromorphic continuation of the right-hand side for all $s\in \C$.}.\end{proof}
\begin{remark}
Note that the integral representation for the additive twist $L$-series in Proposition \ref{prop:linetoadd} is proved in the context of general Maa{\ss} forms for $\Gamma_0(N)$ satisfying (\ref{eq:spectralparameterassump}), i.e.\ not restricted to elements occurring in the spectral decomposition of $\LL^2(\Gamma_0(N),k)$. In particular we will later apply it to the modified Eisenstein series (\ref{eq:E2ast}) (a weight $2$ Eisenstein series evaluated at $s=0$).
\end{remark}
Finally we will record a useful upper bound for the archimedean factors $\gamma_{\pm \frac{k}{2},it_f}(s)$ at the central point $s=1/2$. 

\begin{lemma}\label{lem:boundwhittaker}
Let $0<\delta<1/2$ and 
$$\alpha\in \Z,\quad \beta\in i\R \cup (0,1/2-\delta)\cup \{\ell-\tfrac{1}{2}:\ell\in \Z_{>0} \}.$$
Then we have the uniform bound 
$$\gamma_{\alpha,\beta}(1/2)\ll_\delta (|\alpha|+|\beta|+1)^{3/2}.$$
\end{lemma}
\begin{proof}
Put $M:=|\alpha|+|\beta|+1$. Then by (\ref{eq:whitbound1}) and (\ref{eq:whitbound2}) we have for $0<\eps<1/4$ 
$$ \gamma_{\alpha,\beta}(1/2)\ll_\eps M^{1+\mathfrak{d}_\beta} \int_0^M y^{1/2-\eps-\mathfrak{d}_\beta} \frac{dy}{y}+\int_M^\infty y^{1/2}\exp(-\frac{y}{M}) \frac{dy}{y}, $$
in all cases. If $\beta\notin (0,1/2)$ then we get the wanted by $\eps=1/8$, say. If $0<\beta<1/2-\delta$ then we pick $\eps=\delta/2$.  
\end{proof}

\section{From geodesic periods to additive twist $L$-series}
In this section we will relate the integrals over closed geodesics to additive twist $L$-series. To motivate matters let $f\in \mathcal{S}_2(\Gamma_0(N))$ be a holomorphic cusp form of weight $2$ and level $N$ (normalized so that that the automorphy factor is $(cz+d)^2$) and denote by 
$$F:\Gamma_0(N)\backslash \PSL_2(\R)\rightarrow  \C,$$ 
the lift of $f$ to the unit tangent bundle (or more precisely the lift of $z\mapsto (\Im z) f(z)$). Let $\CC\subset \Gamma_0(N)\backslash \PSL_2(\R)$ be an oriented closed geodesic corresponding to the conjugacy class of $\gamma\in \Gamma_0(N)$. Then it follows from the standard parametrization (\ref{eq:param}) that   
$$\int_\CC F(t)d\mu_\CC(t)=-i \int_\CC f(z)dz=-i \int_{w_\gamma}^{\gamma w_\gamma} f(z)dz,$$
where $w_\gamma$ is any point on the fixed half-circle $S_\gamma\subset \Hb$ of $\gamma$ and the measure $\mu_\CC$ is defined as in Section \ref{sec:closedgeo}.  Now by shifting contours (using the holomorphicity of $f$) and a change of variable we have that 
\begin{equation}\label{eq:contourshift}\int_{w_\gamma}^{\gamma w_\gamma} f(z)dz= -\left(\int_{\gamma w_\gamma}^{\gamma w} f(z)dz+\int_{\gamma w}^{w} f(z)dz+\int_{w}^{w_\gamma} f(z)dz\right)=\int_{w}^{\gamma w} f(z)dz,\end{equation}
for any $w\in \Hb\cup \mathbf{P}^1(\Q)$. Combining with (the proof of) Proposition \ref{prop:linetoadd}, we arrive at the following. 

\begin{prop}\label{prop:holoaddtwist}
Let $F:\Gamma_0(N)\backslash \PSL_2(\R)\rightarrow \C$ denote the lift to the unit tangent bundle of a holomorphic cusp form $f\in \mathcal{S}_2(\Gamma_0(N))$ of weight $2$. Let $\gamma\in \Gamma_0(N)$ be a hyperbolic matrix with $\gamma\infty=a/q$ with $q>0$ and $(a,q)=1$, and let $\CC\subset \Gamma_0(N)\backslash \PSL_2(\R)$ be the oriented closed geodesic corresponding to the $\Gamma_0(N)$-conjugacy class of $\gamma$. Then it holds that 
$$  \int_{\CC} F(t) d\mu_{\CC}(t)= -i\int_{\CC} f(z) dz= 2L(1/2, f, \tfrac{a}{q}). $$
\end{prop} 
\begin{proof}
The proof follows by putting $w=\infty$ in (\ref{eq:contourshift}), using equation (\ref{eq:useful}) in the proof of Proposition \ref{prop:linetoadd} and recalling that $\kappa_f=0$ and
\begin{equation}\label{eq:explicitWtilde}\gamma_{1,1/2}(1/2)=\int_{0}^\infty \tilde{W}_{1,1/2}(y)\frac{dy}{y}=\frac{1}{\sqrt{\Gamma(2)\Gamma(1)}}\int_{0}^\infty 4\pi ye^{-2\pi y} \frac{dy}{y}=2, \end{equation} 
in view of (\ref{eq:specialWhit}). Note here that $f(z)$ is normalized so that $z\mapsto (\Im z) f(z)$ is a Maa{\ss} form of weight $2$.   \end{proof}
We would like to prove something like this for the entire automorphic spectrum of the unit tangent bundle $\Gamma_0(N)\backslash \PSL_2(\R)$. However, in the absence of holomorphicity, we will have to settle for an approximate equality. Our main result is the following, recalling the definitions of our orthonormal bases $\mathcal{B}_0(\Gamma_0(N))$ and $\mathcal{B}_e(\Gamma_0(N))$ defined in (\ref{eq:BB0}) and (\ref{eq:BBE}).
\begin{prop}\label{prop:main}
Let $F\in \mathcal{B}_0(\Gamma_0(N))$ be an $L^2$-normalized cuspidal Hecke--Maa{\ss} form of level $N$ and even weight $k$. Let $\gamma\in \Gamma_0(N)$ be a hyperbolic matrix with $\gamma\infty=a/q$ where  $q\geq 2$ and $(a,q)=1$, and let $\CC\subset \Gamma_0(N)\backslash \PSL_2(\R)$ be  the oriented closed geodesic corresponding to the $\Gamma_0(N)$-conjugacy class of $\gamma$. Then we have for $0<\eps<1/4$ that
\begin{align}\label{eq:cuspmain}  \int_{\CC} F(t) d\mu_{\CC}(t)= c_F^+L(1/2, F, \tfrac{a}{q})+c_F^- L(1/2, F, \tfrac{-a}{q})+O_{N,\eps}\left(|\mathbf{t}_F|^{3+\eps}\left(\frac{q}{|\tr (\gamma)|}\right)^{1/2+\eps}\right),\end{align}
where $|c_F^\pm|\ll_{N}|\mathbf{t}_F|^{3/2}$.
%Similarly, if $\ell m^2|N$ and $E_{\eta, k,it}$ denotes the weight $k$ Eisenstein series defined from $\eta\modulo m$ then we have
Similarly for $E\in \mathcal{B}_e(\Gamma_0(N))$ an Eisenstein series, we have 
\begin{align}\label{eq:eisensteinmain}  \int_{\CC} E(t) d\mu_\CC(t)=&c_E^+ L(1/2,E, \tfrac{a}{q})+c_E^-L(1/2,E, \tfrac{-a}{q})\\ \nonumber
&+O_{N,\eps}\left(|\mathbf{t}_E|^{3+\eps}\left(\left(\frac{q}{|\tr (\gamma)|}\right)^{1/2+\eps}+\left(\frac{q}{|\tr (\gamma)|}\right)^{-1/2-\eps}\right)\right), \end{align}
where $|c_E^\pm|\ll_{N}|\mathbf{t}_E|^{3/2}$.
%$$c_E=(1+(-1)^{k/2})\left(A_{E} \frac{q^{-1/2-it_E}}{1/2+it_E}+ B_{E} \frac{q^{-1/2+it_E}}{1/2-it_E}\right).$$
%If the automorphic forms $F$ (resp. $E$) are Hecke-normalized then the implied constants depend polynomially on the spectral parameter $t_F$ (resp. $t_E$). 
\end{prop} 

The rest of the section is devoted to the proof of this result. Our approach is to use the Fourier expansion to compare the geodesic integral to the vertical line integrals which we have seen are connected to additive twist  $L$-series. Recall from Section \ref{sec:closedgeo} the definitions of $\epsilon_\gamma, g_\gamma, r_\gamma, y_{\gamma,t}$ (\ref{eq:epsgamma})-(\ref{eq:ygammat}) as well as the standard parametrization (\ref{eq:param}) of closed geodesics. The following estimates can be interpreted as saying that the closed geodesic is close to the two vertical segments as illustrated in Figure \ref{fig:S} and that the automorphy factors do not oscillate too much.
\begin{lemma}\label{lem:keybounds}Let $\gamma\in \PSL_2(\Z)$ be hyperbolic and let $\ell_\gamma$ be the hyperbolic length defined in (\ref{eq:geodesiclength}). For $k\in 2\Z_{\geq 0}$, $0<|t|<\ell_\gamma$ and $\epsilon_t=t/|t|\in\{\pm 1\}$, we have  
\begin{align}\label{eq:keyest1} j_{g_\gamma a_t}(i)^{-k}-(-\epsilon_t)^{k/2}&\ll k\frac{y_{\gamma,t}}{r_\gamma},\\
\label{eq:keyest2}  \frac{-e^{t}+e^{-t}}{e^t+e^{-t}} +\epsilon_t&\ll  \frac{(y_{\gamma,t})^2}{(r_\gamma)^2},\\
\label{eq:keyest3}  \Re (g_\gamma a_t i)-\gamma^{\epsilon_t} \infty &\ll \frac{(y_{\gamma,t})^2}{r_\gamma}. \end{align}
\end{lemma}
\begin{proof} We start by noticing that for $-\ell_\gamma\leq t\leq \ell_\gamma$ we have  
$$e^{-|t|}\ll \frac{1}{e^t+e^{-t}}\ll \frac{y_{\gamma,t}}{r_\gamma}.$$
By direct calculation one shows that 
$$j_{g_\gamma a_t}(i)= \frac{ie^t+\epsilon_\gamma}{\sqrt{e^{2t}+1}},$$
and thus by using the equality $x^n-y^n=(x-y)(x^{n-1}+\ldots+y^{n-1})$, we arrive at
\begin{align}j_{g_\gamma a_t}(i)^{-k}-(-\epsilon_t)^{k/2}&\ll k (j_{g_\gamma a_t}(i)^{-2}+\epsilon_t)\ll k \frac{|e^{-t}-e^t+2i\epsilon_\gamma+\epsilon_t(e^t+e^{-t})|}{e^t+e^{-t}}\ll k e^{-|t|} \ll k\frac{y_{\gamma,t}}{r_\gamma},\end{align}
which proves (\ref{eq:keyest1}). By a similar consideration, we obtain (\ref{eq:keyest2}); 
\begin{align} \frac{-e^{t}+e^{-t}}{e^t+e^{-t}} +\epsilon_t&\ll \frac{|-e^{t}+e^{-t}+\epsilon_t(e^t+e^{-t})|}{e^t+e^{-t}}\ll e^{-2|t|}\ll  \frac{(y_{\gamma,t})^2}{(r_\gamma)^2}.\end{align}
Finally, we have by direct computation that  
\begin{align}  \Re (g_\gamma a_t i)-\gamma^{\epsilon_t} \infty =\frac{a-d}{2c}+\epsilon_\gamma r_\gamma \frac{e^{2t}-1}{e^{2t}+1}-\gamma^{\epsilon_t} \infty= \frac{a-d}{2c}+\epsilon_\gamma r_\gamma \frac{e^{t}-e^{-t}}{e^{t}+e^{-t}}-\frac{a-d+\epsilon_t(a+d)}{2c},\end{align}
which by (\ref{eq:keyest2}) is equal to
\begin{align} \epsilon_\gamma \epsilon_t r_\gamma-\frac{\epsilon_t(a+d)}{2c}+ O\left(\frac{(y_{\gamma,t})^2}{r_\gamma}\right) &\ll 
\frac{|a+d|-\sqrt{(a+d)^2-4}}{2|c|}+\frac{(y_{\gamma,t})^2}{r_\gamma}\\ &\ll \frac{1}{|(a+d)c|}+\frac{(y_{\gamma, t})^2}{r_\gamma }.\end{align}
Noticing that $|a+d|^{-1}\ll (r_\gamma)^{-1}|c|^{-1}$ and $|c|^{-1}\ll y_{\gamma,t}$ we obtain 
$$\frac{1}{|(a+d)c|} \ll \frac{c^{-2}}{r_\gamma}\ll \frac{(y_{\gamma, t})^2}{r_\gamma } , $$
and so the final estimate (\ref{eq:keyest3}) follows.
\end{proof}

Finally we will need the following weak but uniform estimates. Similar estimates were obtained in \cite{DrNo22} but we will need uniformity in the spectral data similar to \cite[Section 2.3]{BlomerHarcos08.2}. 
   
\begin{lemma}\label{lem:Fourierbnd}
%Let $F\in \BB_0(\Gamma_0(N))\cup \BB_e(\Gamma_0(N))$ be a $L^2$-normalized Hecke--Maa{\ss} form (either cuspidal or Eisenstein) which is a lift of $f\in\AA(\Gamma_0(N),k)$ with constant term at infinity given by
Let $f\in\AA(\Gamma_0(N),k)$ be a Hecke--Maa{\ss} form with constant term at infinity $ f_\infty(y)$ and such that the lift to the unit tangent bundle lies in $\BB_0(\Gamma_0(N))\cup \BB_e(\Gamma_0(N))$ as defined in (\ref{eq:BB0}) and (\ref{eq:BBE}). Then for $0<\eps<1/4$ we have  
 \begin{equation}\label{eq:Sary1} |f(x+iy)-f_\infty(y)|\ll_{N,\eps} |\mathbf{t}_f|^{2+\eps}y^{-1/2-\eps},  \end{equation}
\begin{equation}\label{eq:Sary2} |f(x+iy)-f(x'+iy)|\ll_{N,\eps} |\mathbf{t}_f|^{3+\eps}y^{-3/2-\eps} |x-x'|. \end{equation}

\end{lemma}
\begin{proof}
Inserting the bounds (\ref{eq:whitbound1}) and (\ref{eq:whitbound2}) into the Fourier expansion (\ref{fexp}) we arrive at
\begin{align}\nonumber
&|f(x+iy)-f_\infty(y)|=|\sum_{n\neq 0} a_f(n) \tilde{W}_{k/2,it_f}(ny)e(nx)|\\
\nonumber &\ll_\eps |\mathbf{t}_f|^{1+\mathfrak{d}_f}\sum_{0<|n|\leq |\mathbf{t}_f|/y} |a_f(n)| (|n|y)^{1/2-\mathfrak{d}_f -\eps} +\sum_{|n|> |\mathbf{t}_f|/y} |a_f(n)| (|n|y)^{1/2}\exp\left(-\frac{|n|y}{|\mathbf{t}_f|}\right).\end{align}
By the expressions (\ref{eq:coefONB}), (\ref{eq:Eisensteinlift}) and the formulas (\ref{eq:oldformFC}), (\ref{eq:FChecke}), we see that 
$$|a_f(n)|\ll_{N,\eps} |\mathbf{t}_f|^\eps |n|^{-1/2}\sum_{d|N}|\lambda_{f^\ast}(\tfrac{n}{(n,d)})| ,$$ 
where $\lambda_{f^\ast}(n)$ denotes the $n$th Hecke eigenvalue of the Hecke--Maa{\ss} newform $f^\ast$ (cuspidal or Eisenstein) underlying $f$. Thus we conclude from \cite[Proposition 19.6]{DuFrIw02} the following uniform Rankin--Selberg bound;
$$\sum_{0<|n|\leq X} |a_f(n)|^2\ll_{N,\eps} |\mathbf{t}_f|^\eps X^{\eps},\quad \text{as }X\rightarrow \infty.$$
The estimate (\ref{eq:Sary1}) now follows by an application of Cauchy--Schwarz. The second claim follows by the same estimates after writing 
\begin{align}
f(x+iy)-f(x'+iy)=\sum_{n\neq 0} a_f(n) \tilde{W}_{k/2,it_f}(ny)(e(nx)-e(nx')),
\end{align} 
and using the inequality $|e(nx)-e(nx')|\ll |n| |x-x'|$. 
%Inserting the bound (\ref{eq:whitbound1}) and (\ref{eq:whitbound2}) into the Fourier expansion we arrive at
%\begin{align}
%&f(x+iy)-f(x'+iy)
%=\sum_{n\neq 0} a_f(n) \tilde{W}_{k/2,it_f}(ny)(e(nx)-e(nx'))\\
%&\ll_\eps |x-x'|\left(|\mathbf{t}_f|^{1+\mathfrak{d}_f}\sum_{0<|n|\leq |\mathbf{t}_f|/y} |n||a_f(n)| (|n|y)^{1/2-\mathfrak{d}_f -\eps} +\sum_{|n|> |\mathbf{t}_f|/y} |n||a_f(n)| (|n|y)^{1/2}\exp\left(-\frac{ny}{|\mathbf{t}_f|}\right)\right).\end{align}
%By (\ref{eq:coefONB}) and (\ref{eq:Eisensteinlift}), we see that 
%$$|a_f(n)|\ll_{N,\eps} |\mathbf{t}_f|^\eps |n|^{-1/2}|\lambda_{f^\ast}(n)| ,$$ 
%where $f^\ast$ is the Hecke--Maa{\ss} newform (cuspidal or Eisenstein) underlying $f$ (and  $F$). Thus we conclude the following uniform bound from \cite[Proposition 19.6]{DuFrIw02};
%$$\sum_{0<n\leq X} |a_f(n)|^2\ll_{N,\eps} |\mathbf{t}_f|^\eps X^{\eps},$$
%and (\ref{eq:Sary2}) follows by an application of Cauchy--Schwarz. The second bound follows by a similar argument.   
\end{proof}
With this at hand we are ready to prove our main result of this section.
\begin{proof}[Proof of Proposition \ref{prop:main}]
Let $F:\Gamma_0(N)\backslash \PSL_2(\R)\rightarrow \C$ be the lift of $f$, either a Hecke cusp form or an Eisenstein series as in the statement of the proposition. To lighten notation we will simply write $r=r_\gamma$, $h=h_\gamma$, $y_t=y_{\gamma,t}$ and $\epsilon=\epsilon_t$. Using the standard parametrization (\ref{eq:ggamma}) of $\CC$ we arrive at    
\begin{align}
\int_{\CC} F(g) d\mu_{\CC}(g)= \int_{-\ell}^\ell F(g_\gamma a_t)dt= \int_{-\ell}^\ell j_{g_\gamma a_t}(i)^{-k}f(g_\gamma a_t i)dt 
\end{align}
where $2\ell=2\ell_\gamma=|\CC|$ is the geodesic length of $\CC$ and $g_\gamma$ as in (\ref{eq:ggamma}). 
%The conditions imply that $\gamma\in \Gamma_0(q)\subset \Gamma_0(N)$ is represented by a matrix
%$$\begin{psmallmatrix}a & b \\ q & d\end{psmallmatrix}\in \SL_2(\Z),$$
%with $N|q$ and $ad-bq=1$. To lighten notation we will simply write $\tau=\tau_\gamma$, $r=r_\gamma$, $h=h_\gamma$, $y_t=y_{\gamma,t}$. 
In view of Proposition \ref{prop:linetoadd}, we want to compare this geodesic integral to the line integral of $f$. By a change of variable we have for $y_t$ as above the following equality of measures:
\begin{equation}\label{eq:COV} \frac{dy_t}{y_t}= \frac{-e^{t}+e^{-t}}{e^t+e^{-t}} dt,\end{equation}
and in view of Figure \ref{fig:S} we see that $y_t$ maps both intervals $(0,\ell)$ and $(-\ell,0)$ to the interval $(h,r)$. %The basic idea is not that in view of Lemma \ref{lem:key} we should roughly have
The idea is now to split up the geodesic integral according to whether $t\in (-\ell,\ell)$ is positive or negative. We will show that each of these two contributions are equal to one of the left-hand side of (\ref{eq:Eichlerint}) in Proposition \ref{prop:linetoadd} up to a small error. More precisely, consider  
\begin{align}
\int_{0}^{\epsilon \ell} \left(j_{g_\gamma a_t}(i)^{-k}f(g_\gamma a_t i)-(-\epsilon)^{k/2+1}f(\gamma^{\epsilon}\infty+iy_t) \frac{-e^{t}+e^{-t}}{e^t+e^{-t}}\right) dt, 
\end{align}
for $\epsilon\in \{\pm 1\}$ (recall that $y_t=\Im(g_\gamma a_t i)$). We express the above as a sum of the following three terms:
\begin{align}\label{eq:term1}
(-\epsilon)^{k/2+1}\int_{0}^{\epsilon \ell} \left(f(g_\gamma a_t i)-f(\gamma^{\epsilon}\infty+i\Im (g_\gamma a_t i))\right)\frac{-e^{t}+e^{-t}}{e^t+e^{-t}} dt, 
%\ll \int_{h}^r y^{-1/2-\eps} \min (1, \frac{y^2}{r y})\frac{dy}{y}\ll r^{-1/2-\eps} 
 \end{align}
 \begin{align}\label{eq:term2}
 \int_{0}^{\epsilon\ell} (f(g_\gamma a_t i)-f_\infty(y_t))\left(j_{g_\gamma a_t}(i)^{-k}-(-\epsilon)^{k/2+1} \frac{-e^{t}+e^{-t}}{e^t+e^{-t}}\right) dt,%\ll \int_{h}^{r} y^{-1/2-\eps} \frac{y^2}{r^2} \frac{dy}{y}\ll_\eps r^{1/2+\eps}+r^{-1/2-\eps}. 
  \end{align}
   \begin{align}\label{eq:term3}
 \int_{0}^{\epsilon\ell} f_\infty(y_t)\left(j_{g_\gamma a_t}(i)^{-k}-(-\epsilon)^{k/2+1} \frac{-e^{t}+e^{-t}}{e^t+e^{-t}}\right) dt. 
  \end{align}
Note that the last term vanishes if $f$ is cuspidal. Applying the bounds (\ref{eq:Sary2}) and (\ref{eq:keyest3}) and doing the change of variable to $y_t$ (corresponding to (\ref{eq:COV})) we see that (\ref{eq:term1}) is bounded by
\begin{align}
|\mathbf{t}_f|^{3+\eps}\int_{h}^r y^{-3/2-\eps} \frac{y^2}{r }\frac{dy}{y}\ll_\eps |\mathbf{t}_f|^{3+\eps}\, r^{-1/2-\eps}. 
 \end{align}
To bound (\ref{eq:term2}) we start by observing that for $0\leq |t| \leq 1$ we have $y_t\asymp r$. Thus by the bound (\ref{eq:Sary1}) (and bounding the remaining terms of norm $1$ trivially) we get
  \begin{align}
 \nonumber \int_{0}^{\epsilon} (f(g_\gamma a_t i)-f_\infty(y_t))\left(j_{g_\gamma a_t}(i)^{-k}-(-\epsilon)^{k/2+1} \frac{-e^{t}+e^{-t}}{e^t+e^{-t}}\right) dt\ll_\eps |\mathbf{t}_f|^{2+\eps} r^{-1/2-\eps}. 
  \end{align}
To bound the remaining contribution from $(\epsilon,\epsilon \ell)$, we start by observing that by (\ref{eq:keyest1}) and another application of (\ref{eq:keyest2}) it holds that  
\begin{equation}\label{eq:smalloscc}j_{g_\gamma a_t}(i)^{-k}-(-\epsilon)^{k/2+1} \frac{-e^{t}+e^{-t}}{e^t+e^{-t}}=(-\epsilon)^{k/2}-(-\epsilon)^{k/2+1} \frac{-e^{t}+e^{-t}}{e^t+e^{-t}}+O\left(\frac{y_t}{r}\right)\ll \frac{y_t}{r},   \end{equation}
noting that $y_t\leq r$. Furthermore, for $|t|>1$, we have $|-e^t+e^{-t}|\gg 1$ which by (\ref{eq:keyest2}) gives that 
 $$\frac{e^t+e^{-t}}{-e^t+e^{-t}}=-\epsilon_t+O\left(y_t^2/r^2\right),$$
where again $\epsilon_t=t/|t|$. This implies by (\ref{eq:COV}) that 
$$dt= \frac{e^t+e^{-t}}{-e^t+e^{-t}}\frac{dy_t}{y_t}\ll \frac{dy_t}{y_t},\quad |t|>1$$ 
interpreted in the infinitesimal sense. Combining (\ref{eq:smalloscc}) with the bound (\ref{eq:Sary1}) we see that the remaining contribution in (\ref{eq:term2}), for $t$ between $\epsilon $ and $\epsilon \ell$, can be bounded by 
 \begin{align}
 \int_{\epsilon }^{\epsilon\ell} |\mathbf{t}_f|^{2+\eps} y_t^{-1/2-\eps}\frac{y_t}{r} dt\ll \frac{|\mathbf{t}_f|^{2+\eps}}{r}\int_{h}^{r} y^{-1/2-\eps} dy\ll_\eps |\mathbf{t}_f|^{2+\eps}r^{-1/2-\eps}.
 \end{align}  
Finally in the Eisenstein case $F=E$ using the bound (\ref{eq:boundtilde}) for the coefficients in the constant term, the elementary inequality $|y^{1/2}\log y|\ll_\eps y^{1/2+\eps}+y^{1/2-\eps}$ to deal with the case where $t_E=0$, and the considerations above we conclude that the last term (\ref{eq:term3}) is bounded by
\begin{align*}
& \ll\int_0^{\epsilon}(|A_E|+|B_E|) (y_t^{1/2+\eps}+y_t^{1/2-\eps}) dt+\int_\epsilon^{\epsilon\ell} (|A_E|+|B_E|) (y_t^{1/2+\eps}+y_t^{1/2-\eps}) dt\\
&\ll_{N,\eps} |\mathbf{t}_E|^\eps (r^{1/2+\eps}+r^{1/2-\eps})+|\mathbf{t}_E|^\eps \int_h^r (y^{1/2+\eps}+y^{1/2-\eps}) \frac{dy}{y}\ll |\mathbf{t}_E|^\eps( r^{1/2+\eps}+r^{1/2-\eps}).
\end{align*}
Putting everything together and changing coordinates back to $y_t$ we conclude that 
 \begin{align}\label{eq:geotoadd} \int_{\CC} F(g) d\mu_{\CC}(g)=&(-1)^{k/2+1}\left(\int_h^r f(\gamma\infty+iy) \frac{dy}{y}+(-1)^{k/2}\int_h^r f(\gamma^{-1}\infty+iy) \frac{dy}{y}\right)\\
&+O_{N,\eps}\left(|\mathbf{t}_f|^{3+\eps}(r^{1/2+\eps}+r^{-1/2-\eps})\right),  \end{align}
with the improved error-term $O_{N,\eps}\left(|\mathbf{t}_f|^{3+\eps}r^{-1/2-\eps}\right)$ in the cuspidal case. Finally we need to add in and subtract off the contributions from the symmetric difference of the interval $(q^{-1},\infty)$ and $(h,r)$ as well as subtracting the contribution of the constant term $f_\infty(y)$. These contributions can be bounded using (\ref{eq:Sary1}) and (\ref{eq:boundtilde}) as follows:
\begin{align*}
\int^{q^{-1}}_h 
\left(f(\gamma^{\pm 1}\infty+iy)-f_\infty(y)\right) \frac{dy}{y}&\ll_{N,\eps} |\mathbf{t}_f|^{2+\eps}q^{3/2+\eps}(q^{-1}-h)\\ &\ll |\mathbf{t}_f|^{2+\eps}\frac{q^{3/2+\eps}}{(a+d)^2 q}\ll |\mathbf{t}_f|^{2+\eps} r^{-1/2-\eps},\\
%&\int_{q^{-1}}^h 
%f_\infty(y)\frac{dy}{y}&\ll_\eps q^{1/2+\eps}(h-q^{-1})\ll q^{-1/2+\eps} \\
 \int_{r}^\infty \left(f(\gamma^{\pm 1}\infty+iy)-f_\infty(y)\right) \frac{dy}{y}&\ll_{N,\eps} |\mathbf{t}_f|^{2+\eps}\int_{r}^\infty y^{-1/2-\eps} \frac{dy}{y}\ll |\mathbf{t}_f|^{2+\eps}r^{-1/2-\eps},\\
 \int^r_h 
f_\infty(y)\frac{dy}{y}&\ll_{N,\eps} |\mathbf{t}_f|^{\eps}(r^{1/2+\eps}+r^{1/2-\eps}).
\end{align*}
Thus we conclude in the cuspidal case that 
\begin{align}  
\int_h^r f(\gamma^{\pm 1}\infty+iy) \frac{dy}{y}= \int_{q^{-1}}^\infty f(\gamma^{\pm 1} \infty+iy)
 \frac{dy}{y}+O_{N,\eps}(|\mathbf{t}_f|^{2+\eps} r^{-1/2-\eps}).
\end{align}
We substitute the above into (\ref{eq:geotoadd}) and apply Proposition \ref{prop:linetoadd} to get the desired formula using that 
$$r=\tfrac{\sqrt{(a+d)^2-4}}{2q}\asymp \tfrac{|a+d|}{q}.$$
The bounds for the coefficients $c_F^\pm$ follow by combining Lemma \ref{lem:boundwhittaker} and the fact that $|\kappa_f|\in \{0,1\}$, see (\ref{eq:kappaf}).  
In the Eisenstein case we pick up an extra error-term $O_{N,\eps}\left(|\mathbf{t}_f|^{3+\eps}r^{1/2+\eps}\right)$ and then the result follows similarly applying Proposition \ref{prop:linetoadd} and observing that the last two terms in (\ref{eq:Eichlerint}) are bounded by 
$$\ll_\eps |\mathbf{t}_E|^\eps q^{-1/2+\eps}\ll |\mathbf{t}_E|^\eps( r^{1/2+\eps}+r^{-1/2-\eps}), $$ 
by using (\ref{eq:boundtilde}), so that it can be absorbed into the error-term. 
\end{proof}
 \begin{remark}
 The proof (and the statement) of Proposition \ref{prop:main} is closely related to the fact that central values of additive twist $L$-series associated to Maa{\ss} forms define \emph{quantum modular forms} in the sense of Zagier \cite{Zagier10} as proved by Sary Drappeau and the author in \cite{DrNo22}. These properties do not rely on the arithmetic properties of $\Gamma_0(N)$ and the corresponding statements of Proposition \ref{prop:main} for any discrete, cofinite subgroup of $\PSL_2(\R)$ with a cusp at $\infty$ can be obtained by the same proof with a possible worse dependence on the spectral data. 
 \end{remark}
 
\section{Weyl sums}
In this section we will bound the Weyl sums not contributing to the main term (i.e. the non-residual spectrum). The idea is to relate the Weyl sums to Dirichlet twists of the $L$-functions of the relevant Hecke--Maa{\ss} form. This is exactly the content of the Birch--Stevens formula (which takes the place of the Waldspurger formula in the automorphic approach to Duke's theorem). First of all we will develop such a formula in sufficient generality for our purposes.

\subsection{Birch--Stevens formula}
%We will now prove a general version of the Birch--Stevens formula which relates additive and multiplicative twists of $\GL_2$-forms. 
As it does not require any extra work, we will for future reference in this section work with Hecke--Maa{\ss} newforms (cupsidal or Eisenstein) of \emph{general nebentypus} $\eta_0 \modulo N$ (see e.g. \cite[Section 6]{DuFrIw02} for a definition). We define the additive twist $L$-series of such Hecke--Maa{\ss} newforms by the exact same formula (\ref{eq:defadditive}) in terms of the Hecke eigenvalues. %In this paper we have $\eta_0$ equal to the principal character modulo $N$ but the statements and proofs in this section are exactly the same in the general case.

Given a Dirichlet character $\chi\modulo q$ and a Hecke--Maa{\ss} newform $f$ of nebentypus $\eta_0$ which is either a cusp form or a linear combination of Eisenstein series, we define
%Given a Hecke--Maa{\ss} newform $f$ of level $N$ and a Dirichlet character $\chi\modulo q$, we define 
%Given a $f\in \mathcal{B}^\ast_k(\Gamma_0(N),\eta)$ and a Dirichlet character $\chi\modulo q$, we define
$$L(s,f,\chi):=\sum_{n\geq 1}\frac{\lambda_f(n)\chi(n)}{n^s},\quad \Re s>1+\Re (it_f), $$
and elsewhere by meromorphic continuation, where $\lambda_f(n)$ denotes the Hecke eigenvalues of $f$. Note that the case of Eisenstein series (possibly off the critical line) correspond to; 
$$\lambda_f(n)=\sigma_{s_f-\frac{1}{2},s_f-\frac{1}{2}}(n,\chi_1,\chi_2),$$
where $\sigma_{s_1,s_2}(n,\chi_1,\chi_2)$ is defined in (\ref{eq:sigmachichi}), $\chi_i\modulo m_i$ are primitive with $\chi_1\chi_2(-1)=1$ and $s_f\in \C$. We emphasize that the only input needed in the proof of the Birch--Stevens formula is the Hecke relations. 
%We will denote by 
%$$\eta_0:=(\chi_0\modulo N),$$ 
%the principal character modulo $N$ (which is the nebentypus in our case). The following formulas are true as stated for general nebentypus $\eta_0$. 
In the case of non-primitive characters and oldforms we will have certain local factors showing up. Given a Hecke--Maa{\ss} newform $f$, a Dirichlet character $\chi\modulo q$, and integers $n,\ell\geq 1$, they are defined by;
 \begin{align}\label{eq:localweight}\nu_s(f, \chi, n;\ell )
 := \sum_{\substack{n_1n_2n_3=n,\\(n_2n_3,\ell)|n_3}} \mu(n_1)\overline{\chi(n_1)} n_2^{1-2s} \eta_0(n_2)\mu(n_2)\chi(n_2)n_3^{1-s}\lambda_f\left(\tfrac{n_3}{(n_3,\ell)}\right)(n_3,\ell)^s\chi\left(\tfrac{\ell}{(n_3,\ell)}\right)  .  \end{align}
For brevity we put $\nu(f, \chi, n;\ell ):=\nu_{1/2}(f, \chi, n;\ell )$. Then we have the following general formula.
 
\begin{prop}[Birch--Stevens formula]\label{BirchStevens} 
%Let $f\in S_k(\Gamma_0(N_f),\psi_f)$ be a newform of weight $k$, level $N_f$ and nebentypus $\psi$. Let $\ell\geq 1$ and consider the old form $f_\ell(z):=\ell^{k/2}f(\ell z)$ of level $\ell$. Then for  a Dirichlet character $\chi \modulo q$ we have
Let $f\in \mathcal{A}_k(\Gamma_0(N),\eta_0)$ be a Hecke--Maa{\ss} newform of level $N$ and nebentypus $\eta_0$ which is either a cusp form or a linear combination of Eisenstein series. Let $\chi \modulo q$ be a  Dirichlet character and $\ell\geq 1$. Then for $s\in \C$ we have the following equality of meromorphic functions:
\begin{align}
\sum_{a\in (\Z/q\Z)^\times} \overline{\chi(a)}L(s, f ,\tfrac{\ell a}{q})=\nu_s(f, \chi^*,\tfrac{q}{\cc(\chi)};\ell) \tau(\overline{\chi^\ast}) L(s,f, \chi^*),
\end{align}
%and 
%\begin{align}
%\label{birchstevens2}L(f,a/c,k/2)=\frac{1}{\varphi(c)} \sum_{\chi \modulo c}\chi(a) \nu(f, \chi^*,c/c(\chi)) L(f, \chi^*,1/2),
%\end{align}
where $\chi^*\modulo \cc(\chi)$ denotes the unique primitive character that induces $\chi$, $\tau(\overline{\chi^*})$ is a Gau{\ss} sum and $\nu_s(\ldots)$ is as in (\ref{eq:localweight}).
\end{prop}
%\begin{remark} Combining this with the functional equation for additive twists one gets the functional equation for twists by primitive characters $\chi$ mod $q$
%$$L(f , \overline{\chi}, s)= \frac{i^k\tau(\overline{\chi})^2}{q} L(f , \chi, 1-s)$$
%\end{remark}
\begin{proof}
For $\Re s\gg_f 1$, we have by absolute convergence that 
\begin{align}\label{cross} \sum_{a\in (\Z/q\Z)^\times} \overline{\chi}(a) L(s,f,\tfrac{\ell a}{q}) = \sum_{n\geq 1} \frac{\lambda_f(n)}{n^s}\left(\sum_{a\in (\Z/q\Z)^\times}\overline{\chi}(a)e(\ell n  a/q)\right).   \end{align}
The inner sum is a Gauss sum and by \cite[Lemma 3]{Sh75}, we get
$$\sum_{a\in (\Z/q\Z)^\times}\overline{\chi}(a)e(\ell n a/q)= \tau(\overline{\chi^*}) \sum_{d\mid ( \ell n,q^\ast)}d\, \overline{\chi^*}\left(\frac{q^\ast}{d}\right) \mu\left(\frac{q^\ast}{d}\right) \chi^*\left(\frac{\ell n}{d}\right), $$
where $\chi^*\modulo \cc(\chi)$ denotes the unique primitive character that induces $\chi$ and $q^\ast=\tfrac{q}{\cc(\chi)}$. Plugging this into (\ref{cross}) and interchanging the sums, we arrive at
\begin{align} %&\sum_{a\in (\Z/q\Z)^\times} \overline{\chi}(a) L(s,f,a/c)\\
\tau(\overline{\chi}^*)\sum_{d\mid q^\ast} d\,\overline{\chi^*}\left(\frac{q^\ast}{d}\right) \mu\left(\frac{q^\ast}{d}\right) \sum_{\substack{n\geq 1:\\d|\ell n }} \frac{\lambda_f(n)}{n^s} \chi^*\left(\frac{\ell n}{d}\right) .  \end{align}
Now the condition $d|\ell n$ mean exactly that $n=n'\tfrac{d}{(d,\ell)}$ for some $n'\geq 1$. Since $f$ is a newform, we have the Hecke relations
$$   \lambda_f(m_1m_2)= \sum_{\substack{h\mid (m_1,m_2)}}\eta_0(h)\mu(h) \lambda_f\left(\frac{m_1}{h}\right)\lambda_f\left(\frac{m_2}{h}\right),  $$
which we apply with $m_1=n'$ and $m_2=\tfrac{d}{(d,\ell)}$. 
Thus summing over $h|\tfrac{d}{(d,\ell)}$ and writing $n'=m h$ for some $m\geq 1$, we arrive at
\begin{align}
\tau(\overline{\chi}^*) \sum_{d\mid q^\ast} d\,\overline{\chi^*}\left(\frac{q^\ast}{d}\right) \mu\left(\frac{q^\ast}{d}\right) \sum_{h|\tfrac{d}{(d,\ell)}} \eta_0(h)\mu(h)\lambda_f(\tfrac{d}{h(d,\ell)})\sum_{m\geq 1}\frac{\lambda_f(m)}{(\tfrac{d}{(d,\ell)}mh)^s}\chi^*\left(\frac{\ell \tfrac{d}{(d,\ell)}mh}{d}\right).   
\end{align}
Putting $n_1=q^\ast/d$, $n_2=h$, $n_3=d/h$, using multiplicativity of $\chi^\ast$ as well as the fact that 
$$(n_2n_3,\ell)|n_3\Rightarrow (n_2n_3,\ell)=(n_3,\ell),$$
we arrive at the wanted for $\Re s$ sufficiently large and thus for general $s$ by meromorphic continuation. \end{proof}
%The second equality of this lemma follows from the first by orthogonality of characters.

\subsection{Bounds for Weyl sums}
Combining all of the above we are ready to prove our bounds for the Weyl sums. Recall Definition \ref{def:psi} of a double coset embedding $\psi:(\Z/q\Z)^\times \hookrightarrow \Gamma_0(q)$, the associated oriented closed geodesics (\ref{eq:CCapsiN}) and canonical measures (\ref{eq:muapsiN}), as well as our choice of orthonormal Hecke bases (\ref{eq:BB0}) and (\ref{eq:BBE}).
\begin{thm}\label{thm:weylsums} Let $F\in \mathcal{B}_0(\Gamma_0(N))$ be a cuspidal Hecke--Maa{\ss} form of level $N$. Consider a double coset embedding $\psi$ of level $q$ with $N|q$, and a coset $cH\subset (\Z/q\Z)^\times$ with $H\leq (\Z/q\Z)^\times $ a subgroup. Then for any $\eps>0$ we have 
\begin{equation}\label{eq:weylsums1} \sum_{a\in cH} \int_{\CC^\psi_a(N)} F(g)d\mu_a^\psi(g) \ll_{N,\eps} |\mathbf{t}_F|^{9/2+\eps}q^{7/8+\eps}+|\mathbf{t}_F|^{3+\eps}\sum_{a\in cH} \left(\tfrac{q}{|\tr(\psi(a))|}\right)^{1/2+\eps}. \end{equation}
Similarly, for an Eisenstein series $E\in \mathcal{B}_e(\Gamma_0(N))$ we have 
\begin{equation}\label{eq:weylsums2}  \sum_{a\in cH} \int_{\CC^\psi_a(N)}E(g)d\mu_a^\psi(g) \ll_{N,\eps} |\mathbf{t}_E|^{11/6+\eps}q^{5/6+\eps}+|\mathbf{t}_E|^{3+\eps}\sum_{a\in cH} \left(\left(\tfrac{q}{|\tr(\psi(a))|}\right)^{1/2+\eps}+\left(\tfrac{q}{|\tr(\psi(a))|}\right)^{-1/2-\eps}\right). \end{equation} 
\end{thm}
\begin{proof}
Consider first of all the cuspidal case. By character orthogonality and Proposition \ref{prop:main} we get
\begin{align}
&\sum_{a\in cH} \int_{\CC^\psi_a(N)} F(g)d\mu_a^\psi(g) \\
=&\sum_{a\in cH}\left(\left(c_F^+L(1/2,F,\tfrac{a}{q})+c_F^-L(1/2,F,\tfrac{-a}{q})\right)+O_\eps\left( |\mathbf{t}_F|^{3+\eps}\left(\tfrac{q}{|\tr(\psi(a))|}\right)^{1/2+\eps}\right)\right) \\
\label{eq:innersum}=& \frac{|H|}{\varphi(q)} \sum_{\chi\, (q): \chi_{|H}=1}\chi(c)\sum_{a\in (\Z/q\Z)^\times} \overline{\chi(a)}\biggr(c_F^+L(1/2,F,\tfrac{a}{q})+c_F^- L(1/2,F,\tfrac{-a}{q})\biggr)\\
&+O_\eps\left(|\mathbf{t}_F|^{3+\eps}\sum_{a\in cH}\left( \tfrac{q}{|\tr(\psi(a))|}\right)^{1/2+\eps}\right). 
\end{align}
Let $\ell\geq 1$ be an integer and $f^\ast$ a Hecke--Maa{\ss} newform of level $N'$ with  $\ell N'|N$ such that $F=(F^\ast)_\ell$ as in (\ref{eq:coefONB}) where $F^\ast$ denotes the lift of $f^\ast$ to the unit tangent bundle. Using the formula (\ref{eq:oldformAT}) for oldform additive twist $L$-series we conclude by the Birch--Stevens formula as in Proposition \ref{BirchStevens} that the inner-sum (in the double sum) of equation (\ref{eq:innersum}) is equal to 
\begin{equation}\label{eq:Weylbndpf} (c_F^+ +\chi(-1)c_F^-)\tau(\overline{\chi^\ast}) \sum_{d|\ell} c_{F^\ast}(d,\ell) L(1/2,f^\ast,\chi^\ast) \nu(f^\ast, \chi^\ast, \tfrac{q}{\cc(\chi)};d),   \end{equation}
where $ |c_{F^\ast}(d,\ell)|\ll_{N,\eps} |\mathbf{t}_F|^\eps$ and $|c_{F}^\pm|\ll_N |\mathbf{t}_F|^{3/2}$ (here the term with $d|\ell$ corresponds to the contribution from the term $F^\ast(dg)$ in (\ref{eq:coefONB}) which in view of (\ref{cross}) shows up as the last parameter $\nu(\ldots; d)$). By applying the subconvexity bound of Blomer--Harcos \cite[eq.\ (1.3)]{BlomerHarcos08} as well as the Kim--Sarnak bound \cite{KiSa03} for Hecke eigenvalues to bound $\nu(\ldots)$ and $|\tau(\overline{\chi^\ast})|= \sqrt{\mathbf{c}(\chi)}$, the quantity (\ref{eq:Weylbndpf}) is bounded by
$$ \ll_{N,\eps} |\mathbf{t}_F|^{9/2+\eps} q^\eps \cc(\chi)^{1/2} \cc(\chi)^{3/8} q^{39/64} \cc(\chi)^{-39/64}\ll_{N,\eps} |\mathbf{t}_F|^{9/2+\eps}  q^{7/8+\eps}.$$
This gives the wanted since the number of Dirichlet characters $\chi \modulo q$ such that $\chi_{|H}=1$ is exactly $\tfrac{\varphi(q)}{|H|}$. In the Eisenstein case we proceed as above using the bound (\ref{eq:eisensteinmain}) instead: by (\ref{eq:Eisensteinlift}) and the Birch--Stevens formula there is some primitive Dirichlet character $\eta\modulo m$ with $m^2|N$ and $t\in \R$ such that the inner sum is equal to  
\begin{equation}\label{eq:oldeiscalc}(c_E^+ +\chi(-1)c_E^-) \tau(\overline{\chi^\ast}) \sum_{d|\ell}c_{t,\eta}(d,\ell)L(1/2,\tilde{E}_{\eta,it}, \chi^\ast) \nu(\tilde{E}_{\eta,it},\chi^\ast,  \tfrac{q}{\cc(\chi)};d),  \end{equation}
where $\tilde{E}_{\eta,it}(z):=\tilde{E}_{\eta,0}(z,1/2+it)$ in terms of the normalized Eisenstein series (\ref{eq:EisensteinFouriertilde}) and we have the bounds 
$$|c_E^\pm|\ll_{N} |\mathbf{t}_E|^{3/2} ,\quad |c_{t,\eta}(d,\ell)|\ll_{N,\eps} |\mathbf{t}_E|^\eps.$$ Since the Hecke eigenvalues of $\tilde{E}_{\eta,it}$ are given by the convolution product $(|\cdot|^{-it}\eta)\ast (|\cdot|^{it}\overline{\eta})$) we get the factorization
$$L(1/2,\tilde{E}_{\eta,it}  , \chi^\ast)=  L(1/2+it,\eta\chi^\ast)L(1/2-it,\overline{\eta}\chi^\ast).$$
Thus by bounding the Gau{\ss} sum and the term $\nu(\ldots)$ as above (using a simple bound for the divisor function) and using the Petrow--Young bound \cite{PetrowYoung19};
$$L(1/2+it,\eta\chi^\ast)\ll_\eps (\cc(\chi)(|t|+1) m)^{1/6+\eps}, $$
we conclude the bound (\ref{eq:weylsums2}).
\end{proof}
In the case of weight $2$ holomorphic forms we get the following improvement uniform in the level.
\begin{cor}\label{cor:holoWeyl}
Let $f\in \mathcal{S}_2(\Gamma_0(N))$ be a cuspidal holomorphic Hecke eigenform of weight $2$ and level $N$ normalized so that the first Fourier coefficient is $1$. Let $\psi$ be a double coset embedding of level $q$ with $N|q$. Let $cH\subset (\Z/q\Z)^\times$ be a coset with $H\leq (\Z/q\Z)^\times $ a subgroup. Then we have
$$\sum_{a\in cH} \int_{\CC^\psi_a(N)} f(z)dz \ll_{\eps} N^{1/4} q^{7/8+\eps}+ N^{3/4} q^{3/4+\eps}.$$
\end{cor}
\begin{proof}
We apply the the same Fourier theoretic argument as in the proof of Theorem \ref{thm:weylsums} combined with the Birch--Stevens formula (Proposition \ref{BirchStevens}) to reduce to $L$-functions, using here Proposition \ref{prop:holoaddtwist} in place of Proposition \ref{prop:main} (so that there is no error-term). The result now follows by the Blomer--Harcos bound (cf. \cite[Theorem 2]{BlomerHarcos08})
%$$L(1/2,f, \chi*) \ll_{\eps} N^{1/4} q^{3/8+\eps}+ N^{3/4} q^{1/4+\eps},$$ 
\begin{align}
\nonumber L(1/2,f , \chi^\ast)&\ll_\eps N^{1/4+\eps}\mathbf{c}(\chi)^{3/8+\eps} +N^{1/2+\eps}(N,\mathbf{c}(\chi))^{1/4} \mathbf{c}(\chi)^{1/4+\eps}\\
&\leq N^{1/4+\eps}q^{3/8+\eps} +N^{3/4+\eps} q^{1/4+\eps},
\end{align}
together with the bound
 $$\tau(\overline{\chi^\ast})\nu(f, \chi^*,\tfrac{q}{\cc(\chi)};1)\ll_\eps \mathbf{c}(\chi)^{1/2} \sum_{n_1n_2n_3=q/\cc(\chi)}n_3^{1/2+\eps} \ll_\eps q^{1/2+\eps}, $$
using the Ramanujan bound for weight $2$ holomorphic forms and a divisor bound.
%and the Ramanujan conjecture (which is known in this case) to bound the terms $\nu(\ldots)$ as defined in (\ref{eq:localweight}). 
\end{proof}
%\begin{remark}
%If we want uniformity in conductor of  $f $ cuspidal, the best subconvexity bound for the twisted $L$-functions available seems to be due to Han Wu (assuming that $(N,q)=1$);
%$$ \ll_{\eps} (\cc(f )q)^\eps \cc(\chi)^{1/2} \cc(f )^{1/4-(1-2\theta)/56} \cc(\chi)^{-1/224}  q^{1/2+\theta} \cc(\chi)^{-(1/2+\theta)}, $$
%where $\theta=\tfrac{7}{64}$. Or Blomer--Harcos
%$$ L(1/2+it,f , \chi)\ll_\eps  (Nt)^{1/2+\eps} q^{3/8+\eps}.$$ 
%\end{remark}
\section{The main terms}
In this section we will obtain lower bounds for the contribution corresponding to the main terms in Theorems \ref{thm:first} and \ref{thm:second}. As such the considerations in this section plays the role of Siegel's lower bound $|\Cl_K^+|\gg_\eps D_K^{1/2-\eps}$ in the automorphic approach to Duke's Theorem as explained in the introduction, see (\ref{eq:Siegel}).
\subsection{Distribution of multiplicative inverses}\label{sec:distri} 
When lower bounding the lengths of $q$-orbits of closed geodesics, special care will have to be given to those $a\in (\Z/q\Z)^\times$ for which $a+\overline{a}$ with $a\overline{a}\equiv 1\modulo q$ is close to $0$ modulo $q$ i.e.\ when the minimal trace  $|t_a|$ is very small (with $t_a$ defined as in (\ref{eq:ta})). This will require information about the fine scale distribution of $(\tfrac{a}{q},\tfrac{\overline{a}}{q})$ on the torus. This is a very classical topic (see for instance \cite{Humphries22} and the references therein). We will below provide a proof of the exact estimates we will need. We will need power saving estimates for Kloosterman sums over cosets of subgroups $H\leq (\Z/q\Z)^\times$ defined as follows for $c\in (\Z/q\Z)^\times$; 
\begin{equation}
S_{cH}(m,n;q):=\sum_{a\in cH}e(\tfrac{ma+n\overline{a}}{q}).
\end{equation}
The first approach to bounding the above is to relate the above (via Fourier theory) to twisted Kloosterman sums. 
\begin{lemma}\label{lem:WeylSaliebnd}Let $q\geq 2$ and $m,n$ be integers. Assume that either $q|n$ or $q$ is cube-free. Then for any subgroup $H\leq (\Z/q\Z)^\times$ and any coset $cH\subset (\Z/q\Z)^\times$ we have 
$$|S_{cH}(m,n;q)|\leq \sigma_0(q)(m,n,q)^{1/2}q^{1/2}.$$
\end{lemma}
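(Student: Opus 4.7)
The strategy is to detect the condition $a\in cH$ using Dirichlet characters mod $q$ and thereby reduce to a uniform bound for twisted Kloosterman sums. By character orthogonality,
\[
\mathbf{1}_{cH}(a)=\frac{|H|}{\varphi(q)}\sum_{\substack{\chi\,(q)\\ \chi_{|H}=1}}\chi(c)\overline{\chi(a)},
\]
so
\[
S_{cH}(m,n;q)=\frac{|H|}{\varphi(q)}\sum_{\substack{\chi\,(q)\\ \chi_{|H}=1}}\chi(c)\,T(m,n,\chi;q),\qquad T(m,n,\chi;q):=\sum_{a\in(\Z/q\Z)^\times}\overline{\chi(a)}\,e\!\left(\tfrac{ma+n\overline{a}}{q}\right).
\]
Since the number of characters mod $q$ that are trivial on $H$ is exactly $\varphi(q)/|H|$, the triangle inequality gives
\[
|S_{cH}(m,n;q)|\leq\max_{\chi\,(q)}|T(m,n,\chi;q)|,
\]
reducing the problem to a uniform bound of the shape $|T(m,n,\chi;q)|\leq\sigma_0(q)(m,n,q)^{1/2}q^{1/2}$ for every Dirichlet character $\chi$ mod $q$.

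In the first case, $q\mid n$, the $\overline{a}$ term disappears and $T(m,n,\chi;q)$ becomes the twisted sum $\sum_{a}\overline{\chi(a)}e(ma/q)$. Writing $\chi$ as induced from a primitive character $\chi^\ast$ of conductor $c(\chi)\mid q$ and using M\"obius inversion to remove the coprimality restriction, this reduces to a product of (i) a classical Gauss sum attached to $\chi^\ast$, of modulus $c(\chi)^{1/2}$ when $(m,c(\chi))=1$ and more generally controlled by $(m,q)^{1/2}q^{1/2}$, and (ii) divisor-type factors arising from the non-primitive part of $q/c(\chi)$. This yields the stated bound with $(m,n,q)=(m,q)$ since $q\mid n$.

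In the second case, $q$ is cube-free. Using the multiplicativity of twisted Kloosterman sums under coprime factorizations (Chinese Remainder Theorem), I would factor $q=\prod p^{e_p}$ with $e_p\in\{1,2\}$ and reduce to each prime power. For $e_p=1$, the Weil bound for Kloosterman sums twisted by a Dirichlet character gives $|T(m,n,\chi;p)|\leq 2(m,n,p)^{1/2}p^{1/2}$. For $e_p=2$, the sum is an explicit Sali\'e-type sum that can be evaluated by completing the square (after inverting $a$ modulo $p$ on the support of $\chi$), producing the bound $(m,n,p^2)^{1/2}p$. Assembling the local bounds by multiplicativity and absorbing the local $2$ factors into $\sigma_0(q)$ yields the claim.

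The main obstacle is the cube-free case at prime-square moduli, where one cannot invoke Weil directly and must instead carry out the explicit Sali\'e-sum evaluation uniformly in the twisting character $\chi$; the key technical point is to check that the non-trivial twist $\chi$ does not spoil the square-completion, which works precisely because $\chi$ restricted to $1+p\Z/p^2\Z$ is an additive character, and this can be absorbed into a shift of the variable.
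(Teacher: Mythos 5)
Your reduction step is exactly the paper's: expand the indicator of $cH$ over the $\varphi(q)/|H|$ Dirichlet characters trivial on $H$, take absolute values, and thereby reduce the problem to a uniform bound on the twisted sums $S_\chi(m,n;q)=\sum_{a}\chi(a)e((ma+n\overline a)/q)$. Where you diverge is in how the twisted Kloosterman bound is obtained in the cube-free case: the paper simply cites Knightly--Li (\cite[Section 9]{KniLi13}, as pointed out in \cite{Humphries22}), whereas you propose to reprove it from scratch by CRT factorization into prime powers $p$ and $p^2$, using Weil for $p$ and an explicit stationary-phase (Sali\'e-type) evaluation at $p^2$. This is a legitimate alternative: it makes the lemma self-contained at the cost of some bookkeeping. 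Your sketch of the $p^2$ evaluation is correct in outline --- the key observation that $\chi$ restricted to $1+p\Z/p^2\Z$ is additive, so the inner sum over $t$ (with $a=a_0(1+pt)$) forces a quadratic congruence $ma_0^2-ba_0-n\equiv 0\ (p)$ with at most two solutions --- but to match the stated bound you also need to treat the degenerate strata where $p\mid m$ or $p\mid n$: when $p\mid m$ and $p\nmid n$ the stationary condition becomes linear, and when $p\mid m$, $p\mid n$ the inner $t$-sum forces $\chi$ to have conductor dividing $p$ and the sum descends to a twisted Kloosterman sum mod $p$ against which Weil again applies; each of these yields the corresponding $(m,n,p^2)^{1/2}p$ factor, and the local constants $2$ per prime are indeed absorbed by $\sigma_0(q)$ since $q$ is cube-free. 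In the case $q\mid n$ your argument coincides with the paper's: $S_\chi$ is a Gauss sum and the classical bound applies. So the proposal is correct; it merely replaces one of the paper's citations by an explicit computation.
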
 
\begin{proof}
By Fourier expanding over Dirichlet characters modulo $q$ we get
\begin{equation}\label{eq:ScH}S_{cH}(m,n;q)=\frac{|H|}{\varphi(q)}\sum_{\chi(q): \chi_{|H}=1}\overline{\chi}(c) S_\chi(m,n;q),\end{equation}
where the sum is over Dirichlet characters modulo $q$ with trivial restriction to $H$ and 
$$S_\chi(m,n;q):=\sum_{a\in (\Z/q\Z)^\times}\chi(a)e(\tfrac{ma+n\overline{a}}{q}).$$
If $n=0$, then $S_\chi(m,0;q)$ reduces to a Gauss sum which implies by \cite[Lemma 3.2]{IwKo} that
$$|S_\chi(m,0;q)|= \left|\tau(\chi^\ast)\sum_{d|(m,q/q^\ast)}d\, \overline{\chi}^\ast(\tfrac{m}{d})\mu(\tfrac{q}{q^\ast d}) \right|\leq \sigma_0(q)(m,q)^{1/2}q^{1/2},$$
where $\chi^\ast\modulo q^\ast$ denotes the unique primitive character inducing $\chi \modulo q$. Similarly, if $q$ is cube-free then we have the bound 
$$|S_\chi(m,n;q)|\leq \sigma_0(q)(m,n,q)^{1/2}q^{1/2},$$
which follows from \cite[Section 9]{KniLi13} as pointed out in \cite{Humphries22}. Inserting these bounds into the Fourier expansion (\ref{eq:ScH}) we get the wanted.
\end{proof}
We remark that a similar argument using \cite[Section 9]{KniLi13} yields the bound 
$$|S_{cH}(m,n;q)|\leq \sigma_0(q)(m,n,q)^{1/2}q^{5/6},$$
for general $q$. However, a simple Cauchy--Schwarz yields the following stronger bound\footnote{The author would like to thank Igor Shparlinski for suggesting this.}. 
\begin{cor}\label{cor:Klooster}
Let $q\geq 2$, $cH\subset (\Z/q\Z)^\times$ a coset of a subgroup $H\leq (\Z/q\Z)^\times$, and $m,n\in \Z$. Then for all $\eps>0$ we have the bound
\begin{equation}|S_{cH}(m,n;q)|\ll_\eps (m,n,q)^{1/4}q^{1/2+\eps}|H|^{1/4}.\end{equation}
\end{cor}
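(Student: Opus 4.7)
The plan is to insert Cauchy--Schwarz into the character expansion already used in Lemma~\ref{lem:WeylSaliebnd} and then reduce the resulting second moment to a sum of classical Kloosterman sums $S(\alpha,\beta;q)$, to which the full Weil--Sali\'e bound (valid for every modulus) can be applied. Starting from (\ref{eq:ScH}) and applying Cauchy--Schwarz over the $\varphi(q)/|H|$ characters $\chi\pmod q$ with $\chi_{|H} = 1$, I would obtain
$$|S_{cH}(m,n;q)|^2 \leq \frac{|H|}{\varphi(q)} \sum_{\chi_{|H}=1} |S_\chi(m,n;q)|^2.$$

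The next step is to compute the second moment on the right. Opening the square and swapping the $\chi$-sum inside, orthogonality of characters on $(\Z/q\Z)^\times/H$ retains only pairs $(a,b)$ with $a/b \in H$; parameterizing $a = bk$ with $k \in H$ and letting $b$ run freely over $(\Z/q\Z)^\times$ yields the clean identity
$$\sum_{\chi_{|H}=1}|S_\chi(m,n;q)|^2 \;=\; \frac{\varphi(q)}{|H|}\sum_{k \in H} S\bigl(m(k-1),\, n(\overline{k}-1);\, q\bigr).$$
The diagonal term $k=1$ contributes $\varphi(q)$, and for $k \neq 1$ I would invoke the Weil--Sali\'e bound $|S(\alpha,\beta;q)| \leq \sigma_0(q)(\alpha,\beta,q)^{1/2}q^{1/2}$ available for arbitrary~$q$.

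It then remains to control the resulting sum of gcd factors. Using $(\overline{k}-1,q) = (k-1,q)$ together with a prime-by-prime check giving $(m(k-1),n(\overline{k}-1),q) \leq (m,n,q)(k-1,q)$, and organising the sum by $d = (k-1,q) \mid q$ via $|\{k \in H : d \mid k-1\}| \leq \min(|H|,q/d) \leq |H|^{1/2}(q/d)^{1/2}$, a standard divisor manipulation gives
$$\sum_{k \in H}(k-1,q)^{1/2} \ll \sigma_0(q)\,|H|^{1/2} q^{1/2}.$$
Assembling the estimates produces $|S_{cH}|^2 \ll_\eps q^{1+\eps}(m,n,q)^{1/2}|H|^{1/2}$, which is the claim after taking square roots. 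The only point requiring care is that the Weil--Sali\'e bound $|S(\alpha,\beta;q)|\leq \sigma_0(q)(\alpha,\beta,q)^{1/2} q^{1/2}$ must be invoked for all moduli $q$ --- not merely cube-free ones as in Lemma~\ref{lem:WeylSaliebnd}; this is the classical combination of twisted multiplicativity with Sali\'e-type explicit evaluations at prime powers $p^k$, $k\geq 2$, and is precisely what allows this Cauchy--Schwarz approach to recover the full square-root saving in $q$ and thus to outperform the direct completion method of Lemma~\ref{lem:WeylSaliebnd}.
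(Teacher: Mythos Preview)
Your proof is correct. Both you and the paper reduce to bounding $\sum_{k\in H}|S(m(k-1),n(\overline{k}-1);q)|$ via the full Weil--Sali\'e bound and the same divisor-sum estimate $\sum_{k\in H}(k-1,q)^{1/2}\ll\sigma_0(q)|H|^{1/2}q^{1/2}$, so the essential content is identical.

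The presentation differs in how you arrive at this sum: the paper uses the amplification identity $|H|\,S_{cH}=\sum_{b\in H}\sum_{a\in cH}e((mab+n\overline{ab})/q)$ and applies Cauchy--Schwarz directly on the group side, obtaining a double sum over pairs $a_1,a_2\in cH$ which (via the invariance $S(\alpha u,\beta\overline{u};q)=S(\alpha,\beta;q)$) collapses to your single sum over $k\in H$. You instead stay on the dual side, applying Cauchy--Schwarz over the characters in (\ref{eq:ScH}) and using orthogonality to evaluate the second moment exactly. Your route is arguably cleaner since it lands directly on the single sum without the intermediate double sum, but the two are Fourier duals of each other and neither buys anything the other does not.
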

\begin{proof}
We start from the trivial equality
$$|H|S_{cH}(m,n;q)=\sum_{b\in H} \sum_{a\in cH}e(\tfrac{mab+n\overline{ab}}{q}).$$
This implies by Cauchy--Schwarz that
\begin{align*} 
|H|^2|S_{cH}(m,n;q)|^2\leq |H| \left( \sum_{b\in H} \left|\sum_{a\in cH} e(\tfrac{mab+n\overline{ab}}{q})\right|^2 \right)
= |H| \sum_{a_1,a_2\in cH} S(m(a_1-a_2),n(\overline{a_1}-\overline{a_2});q).   
\end{align*}
By the Salie--Weil bound \cite[eq.\ (9.6)]{KniLi13} we see that the above is bounded by
\begin{align}
\leq |H|
\sum_{a_1,a_2\in cH} \sigma_0(q)q^{1/2} (m(a_1-a_2),n(\overline{a_1}-\overline{a_2}),q)^{1/2}.\end{align}
Now observe that $\overline{a_1}-\overline{a_2}=\overline{a_1}\overline{a_2}(a_2-a_1)$ which implies that 
\begin{align}
(m(a_1-a_2),n(\overline{a_1}-\overline{a_2}),q)
&\leq ((a_1-a_2)m,(a_1-a_2)n,q)\\ &\leq (a_1-a_2,q)(m,n,q)=(c^{-1}(a_1-a_2),q)(m,n,q).\end{align}
 Thus we arrive at the following bound;
\begin{align}
|H| |S_{cH}(m,n;q)|^2 \leq\sigma_0(q)q^{1/2} (m,n,q)^{1/2}  \sum_{a_1,a_2\in H} (a_1-a_2,q)^{1/2}.
\end{align}
The inner sum we can bound by
$$
\sum_{a_1,a_2\in H} (a_1-a_2,q)^{1/2}
\leq \sum_{d|q} d^{1/2} \sum_{a\in H} |H \cap ((a+d\Z)/q\Z)| 
$$
and using that $|H \cap ((a+d\Z)/q\Z)|\leq \min(|H|, q/d)$ we arrive at the bound
\begin{align}
|H|\left|S_{cH}(m,n;q)\right|^2
&\leq \sigma_0(q)q^{1/2} (m,n,q)^{1/2}\left(\sum_{d|q: d\leq q/|H|} d^{1/2} |H|^{2}+\sum_{d|q: d> q/|H|} d^{-1/2}q|H|\right) \\
 &\ll_\eps(m,n,q)^{1/2}q^{1/2+\eps}(|H|^{3/2}q^{1/2+\eps}),\end{align} 
which yields the wanted bound.
%This seems in fact to be the limit of this method as for 
%$$H=\{a\in (\Z/q)^\times: a\equiv 1\mod d\},$$ 
%for some $d|q$ of size $d\sim q^{1/3}$ then $|H|\sim q^{2/3}$ and the right-hand side of (0.1) is exactly of magnitude $q^{2/3+o(1)}$.
\end{proof}
Note that the above yields a non-trivial bound whenever $|H|\gg q^{\delta}$ with $\delta>2/3$.
\begin{remark}\label{rem:Igor} A natural barrier for the above methods is (at best) the exponent $q^{1/2}$ which is trivial when $|H|\leq q^{1/2}$. Using different methods, Ostafe--Shparlinski--Voloch \cite{OsShpJose22} obtained a power saving estimate in the case where $q$ is prime for subgroups $H$ of size $\gg q^{\delta}$ with $\delta>\tfrac{3}{7}$. Most likely one can also go beyond the square-root barrier for general prime powers using classical techniques from the theory of exponential sums. If furthermore $n=0$, then a power saving estimate was obtained for any $\delta>0$ by Bourgain--Konyagin \cite{BourgainKonyagin03}. We would like to thank Igor Shparlinski for pointing this out.
\end{remark}

Now we are ready to prove the fine scale equidistribution result we will need. 
\begin{prop}\label{prop:smalltrace}
Let $T\geq 1$ be a parameter and $\phi_T:(\R/\Z)^2\rightarrow \C$ a smooth function satisfying
$$ \left|\frac{\partial^n \phi_T}{\partial x^n}(x,y)\right|,\left|\frac{\partial^n \phi_T}{\partial y^n}(x,y)\right|\leq C_n T^n,\quad \forall n\geq 1, $$
for some constants $C_n>0$ independent of $T$. Let $q\geq 2$ and let $cH\subset (\Z/q\Z)^\times$ be a coset of a subgroup $H\leq (\Z/q\Z)^\times$. Then for any $\eps>0$ we have 
\begin{align}\label{eq:equidaabar}
\sum_{a\in cH} \phi_T \left(\frac{a}{q}, \frac{\overline{a}}{q}\right)= |H|\int_{(\R/\Z)^2}\phi_T(x,y)dxdy+ O_{\eps}\left(|\!| \phi_T|\!|_1 T^{2+\eps}q^{1/2+\eps}|H|^{1/4}+q^{-100}\right),
%\sum_{a\in cH} \phi_T \left(\frac{a}{q}, \frac{\overline{a}}{q}\right)= |H|T^{-1}\int_{\R}\phi(g)dx+ O_\eps\left(Tq^{5/6+\eps}+q^{-100}\right).
\end{align} 
where the implied constant is allowed to depend on the constants $C_1,C_2,\ldots$ and 
$$|\!| \phi|\!|_1=\int_{(\R/\Z)^2}|\phi(x,y)|dxdy.$$
In the case where either $q$ is cube-free or $\phi_T$ depends only on the first argument, then we have the improved error term 
\begin{equation}\label{eq:improvedE}O_{\eps}\left(|\!| \phi_T|\!|_1 T^{2+\eps}q^{1/2+\eps}+q^{-100}\right).\end{equation} 
\end{prop} 
\begin{proof}
By Fourier expansion on the torus we get
\begin{align*}
&\sum_{a\in cH} \phi_T \left(\frac{a}{q}, \frac{\overline{a}}{q}\right)&\\
&= |H|\int_{(\R/\Z)^2}\phi_T (x,y)dxdy+ \sum_{(m,n)\neq (0,0)}\int_{(\R/\Z)^2}\phi_T (x,y)e(-mx-ny)dxdy S_{cH}(m,n;q). 
\end{align*}
%The main term comes from the fis given by 
%\begin{align}
%\int_{(\R/\Z)^2}\phi_T (x,y)dxdy=\int_{-1/2}^{1/2}\int_{-y-1/2}^{-y+1/2}\sum_{k\in \Z}\phi(T(x+y+k))dxdy
%&=\int_{-1/2}^{1/2}\int_{-\infty}^{\infty}\phi(Tx)dxdy\\&=T^{-1}\int_{\R}\phi(g)dx.
%\end{align}
By partial integration we see that for $(m,n)\neq (0,0)$ with $|m|\geq |n|$ and any integer $A\geq 2$, we have by the assumption on $\phi_T$ that
$$\int_{(\R/\Z)^2}\phi_T (x,y)e(-mx-ny)dxdy\ll_A |m|^{-A} \int_{(\R/\Z)^2}  \left|\frac{\partial^A \phi_T}{\partial x^A}(x,y)\right|dxdy  \ll_{A} \frac{T^{A}}{(|n|+|m|)^A}. $$
%using that $\partial_x\phi_T =\partial_y \phi_T =T\cdot(\phi^\prime)_T$. 
Thus we may truncate the sum at $|m|,|n|\leq q^\eps T^{1+\eps}$ at the cost of  $O(q^{-100})$, say. %Since we have the bound 
%$$ \int_{(\R/\Z)^2}|\phi_T (x,y)|dxdy\leq  T^{-1} \int_{\R}|\phi(g)|dx,$$
By taking absolute values and applying Corollary \ref{cor:Klooster}, the remaining part of the sum is bounded by $|\!| \phi_T|\!|_1$ 
multiplied by
\begin{align} q^{1/2+\eps}|H|^{1/4}\sum_{(m,n)\neq (0,0): |m|,|n|\leq q^\eps T^{1+\eps}}  (m,n,q)^{1/4}&\ll_\eps q^{1/2+\eps}|H|^{1/4} \sum_{d|q}d^{1/4} \left\lfloor\tfrac{q^\eps T^{1+\eps}}{d}\right\rfloor\left(\left\lfloor\tfrac{q^\eps T^{1+\eps}}{d}\right\rfloor+1\right) \\
&\ll_\eps   T^{2+\eps} q^{1/2+3\eps}|H|^{1/4}  \end{align} 
as wanted. If either $q$ is cube-free or if $\phi_T$ depends only on the first parameter, then we get the improved exponent by inserting the bound from Lemma \ref{lem:WeylSaliebnd} instead. This yields the estimate with the improved error-term. \end{proof}
We will apply the above to obtain a number of useful estimates involving the minimal trace $t_a$ as defined in (\ref{eq:ta}). We recall the following standard construction of a smooth approximation of the indicator function $1_{[0,1]}:\R\rightarrow \{0,1\}$ on the unit interval. Fix $\varphi:\R\rightarrow \R_{\geq 0}$ a smooth bump function supported in $[-1,1]$ with $\int_{-1}^1 \varphi(t)dt=1$. Then for $T\geq 1$ we define $ \varphi_{T}(x):=T\varphi( Tx)$ which is supported in $[-T^{-1},T^{-1}]$ and satisfies $\int_{-1}^1 \varphi_T(x)dx=1$. This will serve as an approximation of the Dirac measure at $x=0$ and we define the smooth function $\psi_{T}:\R\rightarrow [0,1]$ as the following convolution; 
$$ \psi_{T}(y)=\int_\R 1_{[0,1]}(y-x)\varphi_T(x)dx, $$
satisfying the following key properties which are standard to prove;  
\begin{equation} \label{eq:bytheabove2}|\psi_{T}(y)|\leq 1, \quad y\in \R,\qquad \left|\frac{d^n\psi_{T}}{dy^n}(y)\right|\ll_n T^n,\quad n\geq 1,\end{equation} 
\begin{equation}\label{eq:bytheabove}
\psi_{T}(y)=\begin{cases}1,& y\in (T^{-1},1-T^{-1})\\ 0,& y\in (-\infty,-T^{-1})\cup (1+T^{-1},\infty)\end{cases},\quad \int_{\R}\psi_{T}(x)x^mdx= \frac{1}{m+1}+O_m(T^{-1}),\quad m\geq 0.
\end{equation} 

%\begin{equation}\label{eq:bytheabove} \int_{\R}\psi_{T,\pm}(x)x^mdx= \frac{1}{m}+O_m(T^{-1}),\quad m\geq 1 . \end{equation}

\begin{cor}\label{cor:ta}
Let $q\geq 2$ and let $cH\subset (\Z/q\Z)^\times$ be a coset of a subgroup $H\leq (\Z/q\Z)^\times$. Then for $T\geq 1$, $\delta\in (-\tfrac{1}{2},\tfrac{1}{2})$ and $\eps>0$ the following holds:
\begin{align}\label{eq:cor1}|\{a\in cH: |\tfrac{t_a}{q}-\delta|\leq T^{-1}\}|&\ll_\eps T^{-1}|H|+T^{1+\eps} q^{1/2+\eps}|H|^{1/4},\\
\label{eq:cor4}|\{a\in cH: |\{\tfrac{a}{q}\}-1/2-\delta|\leq T^{-1}\}|&\ll_\eps T^{-1}|H|+T^{1+\eps} q^{1/2+\eps} ,\\
\label{eq:cor2}\sum_{a\in cH} \frac{t_a}{q}&\ll_\eps |H|^{3/4}q^{1/6+\eps},\\
\label{eq:cor3}\sum_{\substack{0<a<q:\\ (a\modulo q)\in cH}} \frac{a}{q}&=\tfrac{1}{2}|H|+O_\eps(|H|^{2/3}q^{1/6+\eps}).\end{align}
 
\end{cor}
\begin{proof} For $\varphi:\R\rightarrow \C$ smooth and of compact support, we obtain a function $\phi:(\R/\Z)^2\rightarrow \C$ by 
$$\phi(x,y):=\sum_{k\in \Z} \varphi(x+y+k).$$
By Fubini's Theorem, a change of variable, unfolding and another change of variable, we get the following useful fact; 
\begin{equation}\label{eq:COVunfolding}\int_{(\R/\Z)^2} \phi(x,y) dxdy=\sum_{k\in \Z} \int_{0}^1\int_{k}^{k+1}\varphi(x+y)dxdy=\int_0^1\int_{-\infty}^{\infty} \varphi(x+y)dxdy = \int_{-\infty}^{\infty} \varphi(x)dx.\end{equation}
The first estimate follows directly from Proposition \ref{prop:smalltrace} and (\ref{eq:COVunfolding}) by letting 
$$\phi_T(x,y)= \sum_{k\in \Z}\varphi(T(x+y+k-\delta)),$$ 
where $\varphi:\R\rightarrow \R_{\geq 0}$ is some smooth  positive function with compact support contained in $(-\tfrac{1}{2},\tfrac{1}{2})$ and $\varphi(y)=1$ for $y\in (-\tfrac{1}{4},\tfrac{1}{4})$, say, so that $|\!|\phi_T|\!|_1\asymp T^{-1}$. The second estimate follows similarly by applying Proposition \ref{prop:smalltrace} with the improved error-term (\ref{eq:improvedE}) to 
$$ \phi_T(x,y)= \sum_{k\in \Z}\varphi(T(x+k-\delta-1/2)).$$ 

%Thus the first estimate follows directly by applying Proposition to
%$$(x,y)\mapsto \sum_{k\in \Z}\psi_{T,\pm}(x+y+k)+\psi_{T,\pm}(-x-y-k).$$ 
%where $\psi:(-1/2,1/2)\rightarrow \R_{\geq 0}$ is some smooth  positive function with compact support and $\psi(y)=1$ for $y\in (-1/4,1/4)$, say. 
To prove the third estimate, note first of all that the left-hand side is trivially bounded by $\frac{1}{2}|H|$ and so we can assume that $|H|\geq q^{2/3}$. Consider now the test function
$$\phi_T(x,y)=-\tfrac{1}{2}+\sum_{k\in \Z}(x+y+k+\tfrac{1}{2})\psi_{T}(x+y+k+\tfrac{1}{2}),$$
with $\psi_T$ defined as above. Then we have by the definition (\ref{eq:ta}) of $t_a$, the properties (\ref{eq:bytheabove2}), (\ref{eq:bytheabove}) and the estimate (\ref{eq:cor1}) that
\begin{align*}
\sum_{a\in cH} |\tfrac{t_a}{q}-\phi_T(\tfrac{a}{q},\tfrac{\overline{a}}{q})|&\ll \#\{a\in cH: \tfrac{t_a}{q}\in [-\tfrac{1}{2}-T^{-1},-\tfrac{1}{2}]\cup [\tfrac{1}{2},\tfrac{1}{2}+T^{-1}]\}\\
&\ll_\eps T^{-1}|H|+T^{1+\eps}q^{1/2+\eps}|H|^{1/4}. 
\end{align*}
Now we apply Proposition \ref{prop:smalltrace} with the test function $\phi_T$. By the equality (\ref{eq:COVunfolding}) and the estimate (\ref{eq:bytheabove}) we see that the main term integral amounts to
$$\int_{(\R/\Z)^2}\phi_T(x,y)dxdy=-\frac{1}{2}+\int_{\R}\psi_{T}(x)xdx=-\frac{1}{2}+\frac{1}{2}+O(T^{-1})\ll T^{-1}, $$
and so using the trivial bound $|\!|\phi_T|\!|_1\ll 1$ we arrive at the bound
$$ \ll_\eps T^{-1}|H|+T^{1+\eps}q^{1/2+\eps}|H|^{1/4}+T^{2+\eps}q^{1/2+\eps}|H|^{1/4}.  $$
Picking $T=|H|^{1/4}q^{-1/6}\geq 1$ yields the wanted (using here that $|H|\geq q^{2/3}$). The final estimate follows similarly using the test function 
$$\phi_T(x,y)=\sum_{k\in \Z}(x+k)\psi_{T}(x+k), $$
the improved error term (\ref{eq:improvedE}), the previous estimate (\ref{eq:cor4}) and picking $T=|H|^{1/3}q^{-1/6}$ when $|H|\geq q^{1/2}$.
%$T=1$ and $\phi:\R\rightarrow \C$ a smooth approximation of $x\mapsto x \cdot \delta_{x\in (-\frac{1}{2},\frac{1}{2})}$.
\end{proof}

%$$ \sum_{a\in cH} \phi_T\left(\tfrac{a}{q}\right)=|H|\int_{\R/\Z}\phi_T(x)dx+O(\int_{\R/\Z}|\phi_T(x)|dxT^2q^{1/2+\eps}+q^{-100}). $$

\subsection{Lengths of closed geodesics}
A key application of the above is the following lower bound for the closed geodesics associated to an arbitrary double coset embedding. 
\begin{prop}\label{prop:lowerboundgeo}
Let $N\geq 1$ and $\delta>2/3$. Let $q\geq 2$ be an integer such that $N|q$ and consider a double coset embedding $\psi$ of level $q$. Let $H\leq (\Z/q\Z)^\times $ be a subgroup with $|H|\geq q^{\delta}$ and $cH\subset (\Z/q\Z)^\times$ a coset. Then we have 
\begin{equation} \sum_{a\in cH} |\CC_a^\psi(N)|\gg |H|\log q. \end{equation}
If furthermore $q$ is cube-free, the same conclusion holds as long as $\delta>1/2$.
\end{prop}
\begin{proof}
We apply the estimate  (\ref{eq:equidaabar}), resp. (\ref{eq:improvedE}), from Proposition \ref{prop:smalltrace} (with $T=1$) to the function 
$$(x,y)\mapsto \sum_{k\in \Z}\varphi(x+y+k),$$ 
where $\varphi:\R\rightarrow \R_{\geq 0}$ is some smooth  bump function with support contained in $(\tfrac{1}{5},\tfrac{1}{2})$ and $\varphi(y)=1$ for $y\in (\tfrac{1}{4},\tfrac{1}{3})$, say. This implies that for a positive proportion of $a\in cH$ we have that $|t_a|\geq \tfrac{1}{6}q$, with $t_a$ defined as in (\ref{eq:ta}). Thus we conclude by Lemma \ref{lem:lowerbndtrace} that for a positive proportion of  $a\in cH$ we have $|\tr(\psi(a))|\geq \tfrac{1}{6}q$ and thus by (\ref{eq:geodesiclength}) it follows that $|\CC_a^\psi(N)|\gg \log q $. This gives the wanted lower bound for the geodesic lengths.
\end{proof}
\begin{remark}
Note that in the case where $q$ is prime, a similar lower bound holds whenever $|H|\gg q^{\delta}$ with $\delta>3/7$ by the results mentioned in Remark \ref{rem:Igor}.
\end{remark}
%$$\left|\sum_{a\in (\Z/q\Z)^\times} e\left(\frac{ma+n\overline{a}}{q}\right)\chi(n)\right|\leq \sigma_0(q)q^{1/2}.$$

\subsection{Eisenstein contribution in homology}\label{sec:Eiscontri}
In this section we will determine the main term in the homological equidistribution problem as in Theorem \ref{thm:second} (i.e.\ the contribution coming from the Eisenstein element). Combining Hecke's limit formula (\ref{eq:limitformula}), the Fourier expansion (\ref{eq:E2ast}) of the modified Eisenstein series $E^\ast_2$ and the formula for the normalized Whittaker function as in (\ref{eq:explicitWtilde}), we conclude that 
\begin{align}
\label{eq:E}E(z)&:=\tfrac{-\pi y}{6}E^\ast_2(z)\\
\nonumber&=-\tfrac{\pi }{6}y+\tfrac{1}{2}+4\pi y\sum_{n\geq 1} \sigma_1(n)e^{2\pi i n z}= -\tfrac{\pi}{6}y+\tfrac{1}{2}+\sum_{n\geq 1}\frac{\sigma_1(n)}{n}\tilde{W}_{1,\frac{1}{2}} (4\pi n y) e(nx),
\end{align} 
is a Hecke--Maa{\ss} newform of weight $2$ and level $1$ (since $E_{\infty, 2}(z,s)$ is of weight $2$ and level $1$). Furthermore, again by the limit formula (\ref{eq:limitformula}) and the fact that Eisenstein series are Hecke eigenforms \cite[eq.\ (4.13)]{Young19} we conclude (by interchanging the limit and the finite sum in the definition (\ref{eq:heckeoperator})) that 
$$T_n E= \frac{\sigma_{1}(n)}{n^{1/2}}E=\sigma_{1/2,1/2}(n)E,\quad n\geq 1,$$ 
where $T_n$ denotes the Hecke operator as defined in (\ref{eq:heckeoperator}), $\sigma_{s_1,s_2}(n)=\sum_{ab=n}a^{-s_1}b^{s_2}$ and $\sigma_1(n)=\sum_{d|n}d$.
%{\color{red}Recall that $E^\ast_{2,1/2}$ denotes the limit $\lim_{s\rightarrow 0} \pi^{-s}\Gamma(s)\zeta(2s)E_2(z,s)$ of the completed weight $2$ Eisenstein series. This has Fourier coefficients 
%$$a_{E^\ast_{2,1/2}}(n)=\sigma_{1/2}(n)=\frac{\sigma_1(n)}{n^{1/2}},$$
%where $\sigma_s(n)=\sum_{ab=n}(a/b)^s$ and $\sigma_1(n)=\sum_{d|n}d$. We also define $\sigma_1(1/n)=0$ for any $n\in \Z_{>0}$.
In view of this the local weights (\ref{eq:localweight}) are given by 
\begin{equation} \label{eq:nuEisenstein}\nu(\chi,q;\ell):=\nu_{1/2}(E,\chi,q;\ell)=\sum_{\substack{n_1n_2n_3=q,\\(n_2n_3,\ell)|n_3}} \mu(n_1)\overline{\chi(n_1)}\mu(n_2)\chi(n_2)\sigma_1\left(\tfrac{n_3}{(n_3,\ell)}\right)(n_3,\ell)\chi\left(\tfrac{\ell}{(n_3,\ell)}\right),\end{equation}
which, as we shall see, take a particularly nice form. The key result is the following Birch--Stevens formula for the Eisenstein class.
\begin{prop}\label{cor:Eiscontr}
Let $N\geq 2$ be prime and denote by $\omega_{E}(N)$ the Eisenstein class in cohomology as defined in (\ref{eq:eisensteinclass}). For each $q\geq 2$ such that $N|q$ consider a double coset embedding $\psi$ of level $q$ and a Dirichlet character $\chi \modulo q$. Then we have 
\begin{align}
\nonumber &\sum_{a\in (\Z/q\Z)^\times} \overline{\chi(a)} \langle [\CC^\psi_a(N)], \omega_{E}(N)\rangle_\mathrm{cap}\\
\label{eq:eiscohoformula}&= \frac{6(1-\chi(-1))}{\pi^2}\frac{|\chi^\ast(N)-1|^2}{N-1} |L(1,\chi^\ast)|^2 \cc(\chi) \nu(\chi^\ast,\tfrac{q^\ast}{(q^\ast,N^\infty)};1)+\sum_{a\in (\Z/q\Z)^\times} \overline{\chi(a)}\frac{\tr\,( \psi(a))}{q},  \end{align} 
where $\chi^\ast\modulo \cc(\chi)$ is the unique primitive character inducing $\chi$ and $q^\ast=\tfrac{q}{\cc(\chi)}$. Here the first term is interpreted as $0$ if  $\chi$ is principal.
\end{prop}
In proving this we will need the following lemma on the local weights appearing in the Birch--Stevens formula as in Proposition \ref{BirchStevens}. It will be convenient to define 
\begin{equation}\label{eq:sigma1}\sigma_1(p^m):= \frac{p^{m+1}-1}{p-1},\end{equation} 
for all primes $p$ and integers $m\in \Z$.
\begin{lemma}\label{lem:E2weightlowerbnd}
Let $q\geq 2$ and let $\chi\modulo D$ be a Dirichlet character (not necessarily primitive). Then the weight $\nu(\chi,q;1)$ is a real number satisfying
$$\nu(\chi,q;1)\geq \varphi(q). $$
More precisely for $p$ prime and $m\geq 1$, we have  
$$ \nu(\chi,p^m;1)=\frac{p^{m-1}|p-\chi(p)|^2-|1-\chi(p)|^2}{p-1}. $$
\end{lemma}
\begin{proof}
We observe that $\nu(\chi,q;1)$ is a convolution of multiplicative functions and so is itself a multiplicative function of $q$. By definition we see that 
\begin{equation}\label{eq:chipm1}\nu(\chi,p^m;1)=\sigma_1(p^m)-\sigma_1(p^{m-1})(\chi(p)+\overline{\chi}(p))+\sigma_1(p^{m-2})|\chi(p)|^2,\quad m\geq 1,\end{equation}
which also holds for $m=1$ using the above extension (\ref{eq:sigma1}) of $\sigma_1$. We conclude the required formula after a short calculation. Now it follows readily from (\ref{eq:chipm1}) that the expression is minimized when $\chi(p)=1$ in which case $\nu(\chi,p^m;1)=p^{m-1}(p-1)=\varphi(p^m)$. Thus the result follows by multiplicativity.
\end{proof}
\begin{lemma}\label{lem:Eisweights}
Let $q\geq 2$ be an integer, $\ell$ a prime and $\chi\modulo D$ a Dirichlet character (not necessarily primitive). Then we  have 
$$\nu(\chi,q;\ell)-\nu(\chi,q;1)=
\begin{cases}- (1-\chi(\ell))\nu(\chi,q;1),& (\ell,q)=1,
\\ - |1-\chi(\ell)|^2\nu(\chi,\tfrac{q}{(q,\ell^\infty)};1),& \ell|q.
\end{cases} $$
\end{lemma}
\begin{proof}
Let $\ell^m|\!| q $ with $m\geq 0$. We start by observing that by the explicit formula for the weights (\ref{eq:nuEisenstein}) we have the factorization 
$$\nu(\chi,q;\ell)=\nu(\chi,\ell^m;\ell) \nu(\chi,\tfrac{q}{\ell^m};1).$$ 
This means that $\nu(\chi,q;\ell)=\nu(\chi,q;1)\chi(\ell)$ if $(q,\ell)=1$, which yields the wanted formula in this case. Now assume that $m\geq 1$. We see again by the explicit formula (\ref{eq:nuEisenstein}) that
$$ \nu(\chi,\ell^m;\ell)=\ell\sigma_1(\ell^{m-1})-\ell\sigma_1(\ell^{m-2})(\chi(\ell)+\overline{\chi}(\ell))+\ell\sigma_1(\ell^{m-3})|\chi(\ell)|^2, $$
using again the extension (\ref{eq:sigma1}) of $\sigma_1$. Subtracting the formula (\ref{eq:chipm1}) from the above and using that $\ell\sigma_1(\ell^{j-1})-\sigma_1(\ell^{j})=-1$ for any $j\in \Z$, yield the wanted formula for $m\geq 1$.  
\end{proof}

%\begin{lemma}\label{lem:E2weight}
%Let $\chi$ be Dirichlet character with conductor $\cc(\chi)$. Let $\ell$ be a prime and $q$ a square-free integers such that $(q, \cc(\chi))=1$ and $\ell|q\cc(\chi)$. Then we have for an automorphic newform $f$ of level $N$ that 
%$$\nu_s(f,\chi,q,\ell)-\nu_s(f,\chi,q,1)=-(1-\overline{\chi(\ell)}) \nu_s(f,\chi,\tfrac{q}{(q,\ell)},1).$$ 
%\end{lemma}
%\begin{proof}
%Since  
%$$\chi\left(\frac{\ell}{(n_1n_3,\ell)}\right)-1=\delta_{(n_1n_3,\ell)=1}(\chi(\ell)-1),$$
%we see that 
%\begin{align}
%\nu_s(f,\chi,q,\ell)-\nu_s(f,\chi,q,1)
%=(\chi(\ell)-1) \sum_{\substack{n_1n_2n_3=q,\\(n_1n_3,\ell)=1}}
%\psi(n_1)\chi(n_1) \mu(n_1)n_1^{1-2s}\overline{\chi}(n_2) \mu(n_2) \lambda_f(n_3)n_3^{1-s}. 
%\end{align}
%Now since $q$ is square-free and the sum above is multiplicative (being a convolution of multiplicative functions), we just have to calculate it for $q$ prime. If $q$ does not divide $\ell$ then 
% we get $\nu_s(f,\chi,q,1)$. If $q$ divides $\ell$, then we only get the term $n_1=n_3=1$ and $n_2=q$, which amounts to get $ -\overline{\chi}(q)$. The result now follows since 
 %$$(\chi(\ell)-1)\chi\left((q,\ell)\right)\mu((q,\ell))=-(1-\overline{\chi(\ell)}),$$
% which follows using the assumption $\ell|q\cc(\chi)$. 
%\end{proof} 
\begin{proof}[Proof of Proposition \ref{cor:Eiscontr}] 
If $\chi=\chi_0$ is principal then the first term of (\ref{eq:eiscohoformula}) vanishes. The explicit formula for integrals of the holomorphic Eisenstein series of level $N$ along closed geodesics in terms of Dedekind sums $s(a,q)$ as in \cite[eq.\ (3.20)]{Nor22} gives
\begin{align*} &\langle [\CC^\psi_a(N)], \omega_{E}(N)\rangle_\mathrm{cap}\\
&= \frac{1}{N-1}\left[\frac{\tr(\psi(a))}{q/N}-12\, s(a,q/N)-3\, \sgn(\tr(\psi(a))) -\left(\frac{\tr(\psi(a))}{q}-12\, s(a,q)-3\, \sgn(\tr(\psi(a)))\right)  \right]\\
&= \frac{12(s(a,q)-s(a,q/N))}{N-1}+\frac{\tr(\psi(a))}{q}.  \end{align*}
The result follows in this case by summing the above over $a\in (\Z/q\Z)^\times $ and using the standard symmetry $s(-a,q)=-s(a,q)$. 

Now let us turn to the case where $\chi$ is non-principal. We have by holomorphicity 
\begin{equation}\label{eq:contourshiftE} \langle [\CC^\psi_a(N)], \omega_{E}(N)\rangle_\mathrm{cap}= \int_{w}^{\psi(a) w} E_{2,N}(z)dz, \end{equation}
for any $w\in \Hb$. Let 
$$x^+_a:=\psi(a)\infty,\quad x^-_a:=\psi(a)^{-1}\infty,$$
and put $w=x^-_a+iu/q$ in equation (\ref{eq:contourshiftE}) which satisfies 
\begin{equation}\label{eq:propertx-}\psi(a) (x^-_a+i\tfrac{u}{q})=x^+_a +i\tfrac{u^{-1}}{q} \quad \text{and}\quad j_{\psi(a)}(x^-_a+iu)^2=-1,\quad u>0.\end{equation}
By choosing the contour consisting of two straight lines connecting $w$, $x^-_a +iu^{-1}/q$ and $\psi(a) w $ we arrive at
\begin{align}\label{eq:steptowards}
\langle [\CC^\psi_a(N)], \omega_{E}(N)\rangle_\mathrm{cap}&=x^+_a-x^-_a+i\int_{u/q}^{1/q} E_{2,N}(x^-_a+iy) dy+i\int_{1/q}^{u^{-1}/q} E_{2,N}(x^-_a+iy) dy.
\end{align}
Recalling that $z\mapsto \Im (z) E_{2,N}(z)$ is automorphic of weight $2$ we conclude by the change of variable $y\mapsto y/q$, the properties (\ref{eq:propertx-}) and the change of variable $y\mapsto y^{-1}/q$ that 
\begin{align*}
\int_{u/q}^{1/q} E_{2,N}(x^-_a+iy) dy=\int_{u}^{1} \frac{y}{q} E_{2,N}(x^-_a+i\tfrac{y}{q}) \frac{dy}{y}&= -\int_{u}^{1} \frac{y^{-1}}{q} E_{2,N}(x^+_a+i\tfrac{y^{-1}}{q}) \frac{dy}{y}\\
&=-\int^{u^{-1}/q}_{1/q} E_{2,N}(x^+_a+iy) dy.   
\end{align*}
Now we insert the above into (\ref{eq:steptowards}) and subtract $1$ from each integrand (which cancel out). By the Fourier expansion (\ref{eq:EislevelN}) $E_{2,N}(z)-1$ decays rapidly as $\Im z\rightarrow \infty$ and so by letting $u\rightarrow 0$ we conclude
\begin{align*}
\langle [\CC^\psi_a(N)], \omega_{E}(N)\rangle_\mathrm{cap}&=x^+_a-x^-_a+i\lim_{u\rightarrow 0} \int_{u/q}^{u^{-1}/q} E_{2,N}(x^-_a+iy) dy\\
%&= x^+_a-x^-_a+i\lim_{u\rightarrow 0} \int_1^{u^{-1}/q} E_{2,N}(x^-_a+iy) dy+\int_{u/q}^{1} yE_{2,N}(x^-_a+iy) \frac{dy}{y}\\
%&= x^+_a-x^-_a+i\lim_{u\rightarrow 0} \int_{u/q}^{1} E_{2,N}(x^-_a+iy) dy-\int_{u/q}^{1} yE_{2,N}(x^+_a+iy) \frac{dy}{y}\\
%&= x^+_a-x^-_a+i\lim_{u\rightarrow 0} \int_{u/q}^{1} E_{2,N}(x^-_a+iy)-1 dy-\int_{u/q}^{1} E_{2,N}(x^+_a+iy)-1 dy\\
&= \frac{\tr(\psi(a))}{q}+i\int_{1/q}^{\infty} (E_{2,N}(x^-_a+iy)-1)dy-i\int_{1/q}^{\infty} (E_{2,N}(x^+_a+iy)-1) dy. \end{align*}
By the definition of $E_{2,N}$ we can write the integrals in terms of the modified Eisenstein series (\ref{eq:E2ast}) as
\begin{align}
&\int_{1/q}^{\infty} (E_{2,N}(x^\pm_a+iy)-1)dy\\
&=\frac{1}{N-1}\left(\int_{1/q}^{\infty} (N E^\ast_{2}(N (x^\pm_a+iy))-N+\tfrac{3}{\pi y })dy-\int_{1/q}^{\infty} (E^\ast_{2}(x^\pm_a+iy)-1+\tfrac{3}{\pi y })dy\right).
\end{align}  
Now we apply Proposition \ref{prop:linetoadd} (after the change of variable $\tfrac{y}{q}\mapsto y$) for $f=\nu_{\ell,1} E$ with $\ell\in\{1,N\}$ and $E$ defined as in (\ref{eq:E}) which yields for $s\in \C$
\begin{align*}
&\int_{1/q}^{\infty} (\ell E^\ast_{2}(\ell(x^+_a+iy))-\ell+\tfrac{3}{\pi y })(qy)^{s-1/2}dy-\int_{1/q}^{\infty} (\ell E^\ast_{2}(\ell(x^-_a+iy))-\ell+\tfrac{3}{\pi y }) (qy)^{1/2-s}dy\\
&= \frac{-6}{\pi}\int_{1/q}^{\infty} \left(E(\ell(x^+_a+iy))+\tfrac{\pi }{6}\ell y-\tfrac{1}{2}\right)(qy)^{s-1/2}\frac{dy}{y}+\frac{6}{\pi}\int_{1/q}^{\infty} \left(E(\ell(x^-_a+iy))+\tfrac{\pi }{6}\ell y-\tfrac{1}{2}\right) (qy)^{1/2-s}\frac{dy}{y}\\
%\int_{1/q}^{\infty} (E_{2,N}(x^-_a+iy)-1)dy-\int_{1/q}^{\infty} (E_{2,N}(x^+_a+iy)-1) dy\\
&= \frac{-6}{\pi}\left(q^{s-1/2}2(2\pi \ell)^{1/2-s}\Gamma(s+\tfrac{1}{2})L(s, E,\ell \gamma\infty)+\frac{ \pi \ell}{6q}\left(\frac{1}{3/2-s}-\frac{1}{s+1/2}\right)+\frac{1}{s-1/2}\right),
\end{align*}
using that $\gamma_s^+(\mathbf{t}_{\nu_{\ell,1}E})=q^{s-1/2}2(2\pi)^{1/2-s}\Gamma(s+1/2)$ and the formula (\ref{eq:oldformAT}) for additive twist $L$-series of oldforms. Now when considering the sum over $a\in (\Z/q\Z)^\times$ twisted by $\chi\modulo q$ non-principal, we see that the poles at $s=1/2$ cancel by orthogonality of characters. Thus by letting $s\rightarrow 1/2$ in the Birch--Stevens formula (Proposition \ref{BirchStevens}), we get
\begin{align}
&\sum_{a\in (\Z/q\Z)^\times} \overline{\chi(a)} \langle [\CC^\psi_a(N)], \omega_{E}(N)\rangle_\mathrm{cap}\\
\label{eq:firsttermEisenstein}&=  \frac{12i}{\pi}\frac{\nu(\chi^*,q^\ast;N)-\nu(\chi^*,q^\ast;1)}{N-1}\tau(\overline{\chi^\ast})L(1/2, E, \chi^\ast)+\sum_{a\in (\Z/q\Z)^\times} \overline{\chi(a)}\tfrac{\tr(\psi(a))}{q},\end{align}
where $q^\ast=\tfrac{q}{\cc(\chi)}$. By Lemma \ref{lem:Eisweights} we arrive at the following expression for the first term in (\ref{eq:firsttermEisenstein}) valid in all cases; 
\begin{align}  \frac{-12i }{\pi}\frac{|1-\chi^\ast(N)|^2}{N-1} \nu(\chi^\ast,\tfrac{q^\ast}{(q^\ast,N^\infty)};1) \tau(\overline{\chi^\ast})L(1/2, E, \chi^\ast),   
\end{align}
using here that $N|q$ which means that if $(N,q^\ast)=1$ then $N|\cc(\chi)$ (so that $\chi^\ast(N)=0$).
We can express the Hecke eigenvalues of $E$ as the convolution product $|\cdot|^{1/2}\ast |\cdot|^{-1/2}$, which implies that 
$$\tau(\overline{\chi^\ast}) L(1/2, E, \chi^\ast)=\tau(\overline{\chi^\ast}) L(1,\chi^\ast)L(0,\chi^\ast),$$
vanishing if $\chi^\ast$ is even. For $\chi^\ast$ odd we get by the functional equation for Dirichlet $L$-functions \cite[Ch.\ 9, eq.\ (14)]{Davenport00} that 
$$\tau(\overline{\chi^\ast}) L(1/2, E, \chi^\ast)=\frac{i c(\chi)}{\pi } |L(1,\chi^\ast)|^2.$$
Putting all of this together yields the wanted result.
\end{proof}
We are now ready to prove our main lower bound for the Eisenstein contribution which is even uniform in the level $N$. %This will depend on the following quantity
%\begin{equation} \label{eq:dH}\mathfrak{d}(H)=\begin{cases}2,& \text{if }\{a\in (\Z/q\Z)^\times: a\equiv 1\modulo q/N \}\subset H,\\ 1,& \text{otherwise}  \end{cases}. \end{equation}{\color{red} ADD CONDITION THAT $|H|>q^{2/3}$}
\begin{cor}\label{cor:Eisensteincontr}
For each $\eps>0$ there are constants $c_1(\eps),c_2(\eps)>0$ such that the following holds: Let $N\geq 2$ be a prime and $q\geq 2$ an integer such that $N|q$. Consider a double coset embedding $\psi$ of level $q$ and a subgroup $H\leq (\Z/q\Z)^\times$  satisfying
\begin{enumerate}
\item $-1\notin H$,
\item $\{a\in (\Z/q\Z)^\times: a\equiv 1\modulo q/N \}\not\subset H $.
%\item $|H|\geq q^{5/6+\eps}$.
\end{enumerate}
Then we have 
\begin{align}\label{eq:wantedlowerbound}
\sum_{a\in H}  \langle [\CC_a^\psi(N)], \omega_{E}(N)\rangle_\mathrm{cap}
\geq c_1(\eps) \frac{q^{1-\eps}}{N}-c_2(\eps)|H|^{3/4}q^{1/6+\eps}    +\sum_{a\in H} n_\psi(a),\end{align}
for $q$ large enough.% In particular if $H=((\Z/q\Z)^\times)^2\leq (\Z/q\Z)^\times$ is the subgroup of squares modulo $q$ with $-1\notin H$. Then we can take $c=1$. 
\end{cor}
\begin{proof}
By character orthogonality and Proposition \ref{cor:Eiscontr} we have
\begin{align}
\nonumber\sum_{a\in H}  \langle [\CC_a^\psi(N)], \omega_{E}(N)\rangle_\mathrm{cap} &=\frac{|H|}{\varphi(q)}\sum_{\substack{\chi\modulo q\\ \chi_{|H}=1}}\,\sum_{a\in (\Z/q\Z)^\times}\overline{\chi(a)}\langle [\CC_a^\psi(N)], \omega_{E}(N)\rangle_\mathrm{cap}\\
\label{eq:Eiscontrcalc}&=\sum_{a\in H}\frac{\tr( \psi(a))}{q}+\frac{12|H|}{\pi^2\varphi(q)}\sum_{\substack{\chi\modulo q \\ \chi_{|H}=1\\ \chi(-1)=-1}}\tfrac{|\chi^\ast(N)-1|^2\cc(\chi)}{N-1} |L(1,\chi^\ast)|^2  \nu(\chi^\ast,\tfrac{q^\ast}{(q^\ast,N^\infty)};1).   \end{align}
To deal with the first term, we observe that by Lemma \ref{lem:lowerbndtrace} and Corollary \ref{cor:ta} we have 
$$\sum_{a\in H}\frac{\tr( \psi(a))}{q}=\sum_{a\in H} n_\psi(a)+\sum_{a\in H}\frac{t_a}{q}= \sum_{a\in H} n_\psi(a)+O_\eps(|H|^{3/4}q^{1/6+\eps}).$$ 
%To deal with the remaining contribution we start by noticing that the number of characters $\chi \modulo q$ with $\chi_{|H}=1$ and $\chi(-1)=-1$ is exactly $\tfrac{\varphi(q)}{2|H|}$ (since $-1\notin H$). 
To give a lower bound for the sum over characters in (\ref{eq:Eiscontrcalc}) (note that all the summands are non-negative), we consider the following subset;
$$\mathcal{M}:=\{\chi \modulo q: (q^\ast,N)=1,\chi_{|H}=1,\chi(-1)=-1 \},$$
where $q^\ast=q/\cc(\chi)$. We see by the principle of inclusion and exclusion
$$|\mathcal{M}|=\frac{\varphi(q)}{|H|}-\frac{\varphi(q/N)}{|H\modulo q/N|}-\frac{\varphi(q)}{|H'|}+\frac{\varphi(q/N)}{|H'\modulo q/N|},$$
where $H':=\langle H,-1\rangle\leq (\Z/q\Z)^\times$. Note that $|H'|=2|H|$ by the assumption $-1\notin H$. By simple group theory we see that 
$$\frac{\varphi(q/N)}{|H\modulo q/N|}=\frac{\varphi(q)}{|H|} \frac{|\{ h\in H: h\equiv 1\modulo q/N \}|}{|\{ a\in (\Z/q\Z)^\times: a\equiv 1\modulo q/N \}|},$$
and furthermore the above is an integer dividing $\frac{\varphi(q)}{|H|}$ (and similarly for $H'$). This means that 
\begin{align}
|\mathcal{M}|&=\frac{\varphi(q)}{2|H|} \left(1-\frac{2|\{ h\in H: h\equiv 1\modulo q/N \}|}{|\{ a\in (\Z/q\Z)^\times: a\equiv 1\modulo q/N \}|}+\frac{|\{ h\in H': h\equiv 1\modulo q/N \}|}{|\{ a\in (\Z/q\Z)^\times: a\equiv 1\modulo q/N \}|}\right)\\
&\geq \frac{\varphi(q)}{2|H|} \left(1-\frac{|\{ h\in H: h\equiv 1\modulo q/N \}|}{|\{ a\in (\Z/q\Z)^\times: a\equiv 1\modulo q/N \}|}\right)\geq \frac{\varphi(q)}{4|H|}, \end{align}
where we used the second assumption on $H$ in the last inequality. Note that $(N,q^\ast)=1$  implies $N|\cc(\chi)$ and thus $\chi^\ast(N)=0$. This means that for each $\chi\in M$, we get a positive contribution in (\ref{eq:Eiscontrcalc}) of 
$$\tfrac{|\chi^\ast(N)-1|^2\cc(\chi)}{N-1} |L(1,\chi^\ast)|^2  \nu(\chi^\ast,\tfrac{q^\ast}{(q^\ast,N^\infty)};1) =\tfrac{\cc(\chi)}{N-1} |L(1,\chi^\ast)|^2  \nu(\chi^\ast,q^\ast;1)\gg_\eps \frac{q^{1-\eps}}{N}, $$
by Lemma \ref{lem:E2weightlowerbnd} and the lower bounds $L(1,\chi^\ast)\gg_\eps \cc(\chi)^{-\eps}$ and $\varphi(n)\gg_\eps n^{1-\eps}$. Combining this with the lower bound on $|\mathcal{M}|$ as above yields the wanted inequality (\ref{eq:wantedlowerbound}).
\end{proof}
%\begin{remark}
%The same type of argument yields the lower bound
%\begin{align}
%\sum_{a\in H}  \langle [\CC_a^\psi(N)], \omega_{E}(N)\rangle_\mathrm{cap}
%\geq c_\eps \left(\frac{q^{1-\eps}}{N^2}-|H|^{2/3}q^{5/18+\eps}\right)    +\sum_{a\in H} n_\psi(a),\end{align} 
%if we change the second assumption on $H$ in Corollary \ref{cor:Eisensteincontr} to the weaker assumption that either
%$$[-1\in (H\modulo q_0)]\vee [(N\modulo q_0)\notin (H\modulo q_0)]\vee[\{a\in (\Z/q\Z)^\times: a\equiv 1\modulo q_0 \}\not\subset H] $$
%\begin{enumerate}
%\item  $-1\in (H\modulo q_0)$, {\it OR}
%\item  $(N\mod q_0)\notin (H\modulo q_0)$, {\it OR}
%\item  $\{a\in (\Z/q\Z)^\times: a\equiv 1\modulo q_0 \}\not\subset H$,
%\end{enumerate}
%$ N^\ast\notin H$, %,\{a\in (\Z/q\Z)^\times: a\equiv 1\modulo q_0 \}\not\subset H  $$
%where $N^\ast \in (\Z/q\Z)^\times$ is defined by 
%$$N^\ast\equiv 1 \modulo N^a,\quad N^\ast\equiv N\modulo q_0,$$ 
%where $q=N^a q_0$ with $(N,q_0)=1$.% (by the Chinese Reminder Theorem this determines $N^\ast$ uniquely).
%\end{remark}
\subsubsection{In the case where $-1\in H$}
Similarly we obtain the following lower bound in the case where $-1\in H$.
\begin{cor}\label{cor:-1inH}
Let $N\geq 2$ be prime. Let $q\geq 2$ be such that $N|q$ and let $H\leq (\Z/q\Z)^\times$ be a subgroup containing $-1$. Then for any coset $cH\subset (\Z/q\Z)^\times$ and $\eps>0$ we have 
\begin{align}
\sum_{a\in cH}  \langle [\CC_a^\psi(N)], \omega_{E}(N)\rangle_\mathrm{cap}
= \sum_{a\in cH} n_\psi(a)+O_\eps(|H|^{3/4}q^{1/6+\eps}).\end{align} 
\end{cor}
\begin{proof}  
By the assumption $-1\in H$, we have by  Proposition \ref{cor:Eiscontr} that
$$\sum_{a\in cH}  \langle [\CC_a^\psi(N)], \omega_{E}(N)\rangle_\mathrm{cap}=\sum_{a\in cH}\frac{\tr( \psi(a))}{q},$$
and thus the result follows by Lemma \ref{lem:lowerbndtrace} and the bound (\ref{eq:cor2}) of Corollary \ref{cor:ta} as above.
\end{proof}

\section{Proof of equidistribution}
In this section we will prove our main results using the above. We begin by some technical reductions which are necessary for the proof of Theorem \ref{thm:first}.
\subsection{Reduction to regular $\psi$}
Let $N\geq 1$ and let $q\geq 2$ be such that $N|q$. Let $\psi$ be a double coset embedding of level $q$ and $cH\subset (\Z/q\Z)^\times$ a coset. Then we want to prove that for any smooth, compactly supported function $\phi : \mathbf{T}^1(Y_0(N))\rightarrow \C$, we have (with an explicit error term) 
$$ \frac{\sum_{a\in cH}\int_{\CC^\psi_a(N)} \phi (t) d\mu^\psi_a(t)  }{\sum_{a\in cH}|\CC^\psi_a(N)|}\rightarrow \int_{\mathbf{T}^1(Y_0(N))} \phi (g) dg,\quad \text{as }q\rightarrow \infty.$$
Special care will have to be given to the $a\in (\Z/q\Z)^\times$ for which $\psi(a)$ is \emph{singular} in the sense that $|\tr(\psi(a))|<q^{1-\eps}$ for some (small) $\eps>0$. Given a double coset embedding $\psi: (\Z/q\Z)^\times\hookrightarrow \Gamma_0(N)$ of level $q$, we define for $\eps>0$  another  double coset embedding of level $q$  by 
$$\psi_\eps(a):=\begin{cases} \psi(a),& |\tr(\psi(a))|> q^{1-\eps}\\ \psi(a)\begin{psmallmatrix}1 & 1 \\ 0 & 1 \end{psmallmatrix} ,& |\tr(\psi(a))|\leq  q^{1-\eps}\end{cases}.$$
We say that $\psi$ is \emph{$\eps$-regular} if $\psi=\psi_\eps$ and notice that $\psi_\eps$ is itself $\eps$-regular for $q$ large enough since 
$$|\tr(\psi(a)\begin{psmallmatrix}1 & 1 \\ 0 & 1 \end{psmallmatrix})|=|q+\tr(\psi(a))|\geq q-|\tr(\psi(a))|.$$ 
We have the following result which allows us to reduce to the regular case. 
\begin{lemma}\label{lem:regular}
Fix $\delta>2/3$. Let $N\geq 1$ and $q\geq 2$ be integers such that $N|q$. Consider a double coset embedding $\psi$ of level q and a coset $cH\subset (\Z/q\Z)^\times$ of a subgroup $H\leq (\Z/q\Z)^\times$ of size $\geq q^\delta$. Then for any bounded function $\phi: \Gamma_0(N)\backslash \PSL_2(\R)\rightarrow \C$ and $\eps>0$ we have that
\begin{equation}\label{eq:regular}\left|\frac{\sum_{a\in cH}\int_{\CC^\psi_a(N)} \phi (t) d\mu^\psi_a(t)  }{\sum_{a\in cH}|\CC^\psi_a(N)|}-\frac{\sum_{a\in cH}\int_{\CC_a^{\psi_\eps}(N)} \phi (t) d\mu_a^{\psi_\eps}(t)  }{\sum_{a\in cH}|\CC^{\psi_\eps}_a(N)|}\right|\ll_\eps |\!|\phi|\!|_\infty\left(q^{-\eps}+\frac{q^{1/2+3\eps}}{|H|^{3/4}}\right).\end{equation}
\end{lemma}
\begin{proof}
First of all we see that
\begin{align}\nonumber
\left|\frac{\sum_{a\in cH}\left(\int_{\CC^\psi_a(N)} \phi (t) d\mu^\psi_a(t)-\int_{\CC_a^{\psi_\eps}(N)} \phi (t) d\mu_a^{\psi_\eps}(t)\right) }{\sum_{a\in cH}|\CC^\psi_a(N)|}\right| &
\ll |\!|\phi|\!|_\infty \frac{\sum_{a\in cH: |\tr (\psi(a))|\leq q^{1-\eps}}|\CC^\psi_a(N)|+|\CC_a^{\psi_\eps}(N)| }{\sum_{a\in cH}|\CC^\psi_a(N)|} \\
\label{eq:lemma71}&\ll |\!|\phi|\!|_\infty \frac{\log q\cdot |\{a\in cH: |\tr(\psi(a))|\leq q^{1-\eps}\}|}{\sum_{a\in cH}|\CC^\psi_a(N)|},
\end{align}
since $|\tr(\psi_\eps(a))|\leq q+|\tr(\psi(a))|\ll q$ in the sum on the right-hand side of (\ref{eq:lemma71}). Now by the bound (\ref{eq:cor1}) from Corollary \ref{cor:ta} (with $T=q^\eps$ and $\delta=0$) we see that
$$|\{a\in cH: |\tr(\psi(a))|\leq q^{1-\eps}\}|\leq |\{a\in cH: \tfrac{|t_a|}{q}\leq q^{-\eps}\}|\ll_\eps |H|q^{-\eps}+q^{1/2+3\eps}|H|^{1/4},$$
and by Proposition \ref{prop:lowerboundgeo} we have
$$\sum_{a\in cH}|\CC^\psi_a(N)|\gg |H|\log q,  $$
which shows that the left-hand side of (\ref{eq:lemma71}) is bounded by the right-hand side of (\ref{eq:regular}).  
Similarly we conclude that 
$$\left|\sum_{a\in cH}\int_{\CC_a^{\psi_\eps}(N)} \phi (t) d\mu_a^{\psi_\eps}(t)\left(\frac{1}{\sum_{a\in cH}|\CC^{\psi}_a(N)|}-\frac{1}{\sum_{a\in cH}|\CC^{\psi_\eps}_a(N)|}\right)\right|\ll|\!|\phi|\!|_\infty\left(q^{-\eps}+\frac{q^{1/2+3\eps}}{|H|^{3/4}}\right) $$
by Corollary \ref{cor:ta}, in view of Lemma \ref{lem:lowerbndtrace}, and Proposition \ref{prop:lowerboundgeo}. This gives the wanted by the triangle inequality.
\end{proof}
This means that we may assume that the double coset embedding $\psi$ is $\eps$-regular for some $\eps>0$ as long as $|H|\geq q^{\delta}$ for $\delta>2/3$. We are now ready to prove our main result.
\begin{thm} Fix $N\geq 1$, $0<\kappa<2$ and $\delta>7/8$. For every $q\geq 2$ such that $N|q$, consider a coset $cH\subset (\Z/q\Z)^\times$ with $H\leq (\Z/q\Z)^\times$ a subgroup of size $\geq q^\delta$ and a double coset embedding $\psi:(\Z/q\Z)^\times \hookrightarrow \Gamma_0(N)$  of level $q$ such that $|\tr (\psi(a))|\leq q (\log q)^{\kappa}$ for all $a\in cH$. Then for any smooth and compactly supported function $\phi: \mathbf{T}^1(Y_0(N))\rightarrow \C$ and $\eps>0$ we have that 
$$ \frac{\sum_{a\in cH}\int_{\CC^\psi_a(N)} \phi (t) d\mu^\psi_a(t)  }{\sum_{a\in cH}|\CC^\psi_a(N)|}= \int_{\mathbf{T}^1(Y_0(N))} \phi (g) dg+O_{\phi,\eps}((\log q)^{\kappa/2-1+\eps}).$$
%as $q\rightarrow \infty$. %and $\vol(B)$ denotes the volume with respect to the hyperbolic volume element $\frac{dz\overline{dz}}{(\Im z)^2}$.
%Assuming the Generalized Lindel\"{o}f hypothesis, the same conclusion holds as long as $|H|\gg q^{1/2+\eps}$.  
\end{thm}
\begin{proof}
By Weyl's Criterion as in Lemma \ref{lem:WC} and by Proposition \ref{prop:lowerboundgeo}, it suffices to prove the following bound for the Weyl sums; 
$$\sum_{a\in cH} \int_{\CC_a^\psi(N)} F(t)d\mu_a^\psi(t)\overset{?}{\ll_\eps} |\mathbf{t}_F|^A (q^{7/8+\eps}+|H|(\log q)^{\kappa/2+\eps}),$$ 
for $F\in \BB_0(\Gamma_0(N))\cup \BB_e(\Gamma_0(N))$ and some (fixed) $A>0$. By applying Theorem \ref{thm:weylsums}, the problem is reduced to bounding the remaining terms depending on the traces of $\psi$ on the right-hand sides of (\ref{eq:weylsums1}) and (\ref{eq:weylsums2}). By the assumption that $|\tr (\psi(a))|\leq q (\log q)^{\kappa}$ we see that 
$$ \left(\frac{|\tr (\psi(a))|}{q}\right)^{1/2+\eps}\leq (\log q)^{\kappa/2+2\eps},   $$
as wanted. By Lemma \ref{lem:regular} and the assumption that $|H|\geq q^{\delta}$ with $\delta>7/8$ we may assume that $\psi$ is $\eps$-regular for some $0<\eps<1/100$ and thus by an application of the estimate (\ref{eq:cor1}) from Corollary \ref{cor:ta} we get 
\begin{align}\nonumber \sum_{a\in cH}\left(\frac{q}{|\tr (\psi(a))|}\right)^{1/2+\eps}&\leq |\{a\in cH: |\tr (\psi(a))|\geq q\}|+\sum_{1\leq m\leq \eps \log q}\sum_{\substack{a\in cH:\\ \tfrac{q}{2^{m}}\leq |\tr (\psi(a))|<\tfrac{q}{2^{m-1}}}}2^{m(1/2+\eps)}\\
&\ll  |H|+|H|\sum_{1\leq m\leq \eps \log q} 2^{-m} 2^{m(1/2+\eps)}+q^{1/2+3\eps} |H|^{1/4}\ll_\eps |H|.   \end{align}
Thus we conclude the wanted effective convergence using Weyl's Criterion as in Lemma \ref{lem:WC}.
%This implies that the Weyl sums for Hecke--Maa{\ss} forms (\ref{eq:weylsums1}) as well as for Eisenstein series (\ref{eq:weylsums2}) are bounded by $O_\eps(|H|(\log q)^{\delta/2+\eps})$. Thus by Proposition \ref{prop:lowerboundgeo} we conclude that Weyl's Criterion is indeed satisfied.    
\end{proof}
\begin{remark}\label{rem:reduce}
Notice that when $N=1$, $q=p$ is prime and $H= (\Z/p\Z)^\times$, the Weyl sum of a Hecke--Maa{\ss} form $f$ is of the shape 
$$c_fL(1/2,f)\nu(f,\chi_0, p;1)= c_fL(1/2,f)(p^{1/2}\lambda_f(p)-2),$$
for some constant $c_f$, \emph{plus} an error-term that is negligible compared to the main term (i.e.\ the last terms on the right-hand side of (\ref{eq:weylsums1}) and (\ref{eq:weylsums2})). Thus equidistribution as $p\rightarrow \infty$ in this case follows from any power saving bound for the Hecke eigenvalues. This is exactly because the Weyl sums in this case are essentially obtained by integrating the automorphic forms along the Hecke orbits  $\{\gamma_ti \R:t\in\mathbf{P}^1(\mathbb{F}_p)\}$ with notation as in (\ref{eq:Heckeorbit}) above.  
\end{remark}
\subsection{Concentration in homology}

%\subsubsection{Level aspect}
Recall that for a finite dimensional vector space $V$ we defined the \emph{sphere associated to $V$} as $\mathbf{S}(V):=(V-\{0\})/\R_{>0}$ equipped with the quotient topology and denoted by $v\mapsto \overline{v}$ the canonical projection for $v\neq 0$. By the Weyl's Criterion as in Lemma \ref{lem:WCvector}, the convergence 
$$\overline{v_n}\rightarrow \overline{v_0}\quad \text{in }\mathbf{S}(V),$$  is  equivalent to showing for some (fixed) norm $\left|\!\left| \cdot\right|\!\right|:V\rightarrow \R_{\geq 0}$ that
\begin{equation}\label{eq:weyllinear} \left|\!\left| \tfrac{v_n}{\left|\!\left|  v_n\right|\!\right|}-\tfrac{v_0}{\left|\!\left|  v_0\right|\!\right|} \right|\!\right|\rightarrow 0,\quad \text{as }n\rightarrow \infty.\end{equation}
In our setting, we consider the vector space $V_N=H_1(Y_0(N),\R)$ with $N$ prime. We will actually obtain bounds for the left-hand side of (\ref{eq:weyllinear}) \emph{uniform} in terms of $N$ for certain natural norms on $V_N$ (this corresponds to the \emph{level aspect} in our equidistribution problem cf.\ \cite{LiuMasriYoung13}). Given a basis $B^\ast$ of the dual $V_N^\ast\cong H^1(Y_0(N),\R)$ we define the following norm on $V_N$;
\begin{equation}\label{eq:normB}|\!|v|\!|_{B^\ast}:=\max_{\omega\in B^\ast}|\langle v, \omega\rangle_\mathrm{cap}|.\end{equation}
It is a standard fact \cite[eq.\ (3)]{Zagier85} that the following matrices generate $\Gamma_0(N)$; 
\begin{equation}\label{eq:S}\left\{\begin{psmallmatrix} 1 & 1 \\0 & 1 \end{psmallmatrix}\right\}\cup \left\{\begin{psmallmatrix} a & -(aa^{\ast} +1)/N \\N & -a^\ast \end{psmallmatrix}: 0< a<N \right\},\end{equation} 
where $0< a^\ast <N$ is such that $aa^{\ast} \equiv -1\modulo N$. For each prime $N$ consider a basis $B$ of $H_1(Y_0(N), \R)$ containing $v_E(N)$ and consisting of images of elements of (\ref{eq:S}) under the isomorphism
$$H_1(Y_0(N), \Z)\cong \Gamma_0(N)^\mathrm{ab}/(\Gamma_0(N)^\mathrm{ab})_\mathrm{tor}.$$ 
Let $B^\ast $ be the dual basis and $|\!|\cdot|\!|_{N}:=|\!|\cdot|\!|_{B^\ast}$ the associated norm  on $H_1(Y_0(N),\R)$ as defined in (\ref{eq:normB}). The system of norms $(|\!|\cdot|\!|_{N})_{N\text{ prime}}$ will serve as a set of {\lq\lq}natural norms{\rq\rq} when varying $N$. They have a nice interpretation in terms of group theory as we will see.  

\begin{thm}\label{thm:mainhomo} Let $N\geq 2$ be a prime, $0<\eps<1/100$ and  let $|\!|\cdot|\!|_N$ be a norm on $H_1(Y_0(N),\R)$ defined as above. For $q\geq 2$ such that $N|q$, let $H\subset (\Z/q\Z)^\times$ be a subgroup and let $\psi$ a double coset embedding of level $q$ such that 
\begin{enumerate}
\item $-1\notin H$,
\item $\{a\in (\Z/q\Z)^\times: a\equiv 1\modulo q/N \}\not\subset H$,
\item \label{eq:nsumbound2}$\left|\sum_{a\in H}n_\psi(a)\right| \leq \frac{q^{1-\eps}}{N}$.
\end{enumerate}%not containing $-1$ and let $\psi$ a double coset embedding of level $q$ such that 
%\begin{equation}\label{eq:nsumbound}\left|\sum_{a\in H}n_\psi(a)\right| \leq \frac{|H|}{N}q^{-1/1000}.\end{equation}
%where $\mathfrak{d}(H)$ is given by (\ref{eq:dH}).
Then we have 
\begin{equation}\label{eq:thmlevelaspect1}  \left|\! \left|\frac{\sum_{a\in H} [\mathcal{C}^\psi_{a}(N)]}{|\!|\sum_{a\in H}  [\mathcal{C}^\psi_{a}(N)]|\!|_N}-\frac{v_E(N)}{|\!|v_E(N)|\!|_N}\right|\! \right|_N \ll_{\eps}N^{9/4}q^{-1/8+\eps}. \end{equation}
%If $H=((\Z/q\Z)^\times)^2$ and $-1$ is not a quadratic residue mod $q$, then we have the improved exponents $N^{2} q^{-1/6+\eps}$.  
\end{thm}   
\begin{proof}
We start by noting that the left-hand side of (\ref{eq:thmlevelaspect1}) is trivially bounded by $2$ by the triangle inequality. Thus we may assume that $q\geq N^{18} $. Let $B$ be a basis for $V_N$ associated to a subset of (\ref{eq:S}) containing $v_E(N)$ and let $B^\ast\subset H^1(Y_0(N),\R)$ be the dual basis with respect to the cap product pairing. By expanding in the Hecke basis (\ref{eq:Heckebasis}) we have for $\omega\in B^\ast$ that
\begin{align}\nonumber  \sum_{a\in H} \langle [\mathcal{C}^\psi_{a}(N)], \omega \rangle_\mathrm{cap} 
\label{eq:Wald1} =&\sum_{f\in \mathcal{B}_N, \pm} \langle v_f^{\pm}, \omega\rangle_\mathrm{cap} \sum_{a\in H} \langle [\mathcal{C}^\psi_a(N)], \omega_f^{\pm} \rangle_\mathrm{cap}  \\
&+\langle v_{E}(N), \omega\rangle_\mathrm{cap} \sum_{a\in H} \langle [\mathcal{C}^\psi_a(N)], \omega_{E}(N) \rangle_\mathrm{cap}  .  \end{align}
Now that by the definition of the cap product pairing and of the classes $\omega_f^\pm$ in (\ref{eq:omegapmclass}) we have 
\begin{equation}
\langle [\mathcal{C}^\psi_a(N)], \omega_f^{\pm} \rangle_\mathrm{cap}= \frac{1}{1+i\pm (1-i)} \left(2\pi i\int_{\mathcal{C}^\psi_a(N)} f(z)dz\pm \overline{2\pi i\int_{\mathcal{C}^\psi_a(N)} f(z)dz}\right).
\end{equation}
Thus by inserting the bound from Corollary \ref{cor:holoWeyl} (note that the first term is dominating since $q\geq N^{18}$)  we conclude that the cuspidal contribution in (\ref{eq:Wald1}) is bounded by 
 \begin{align}\ll_\eps N^{1/4}q^{7/8+\eps/2} \sum_{f\in\BB_N,\pm} |\langle v_f^{\pm}, \omega\rangle_\mathrm{cap}|\ll_\eps N^{5/4}q^{7/8+\eps},
 \end{align}
where we in the last inequality applied Cauchy--Schwarz and the $L^2$-bound for cap product pairings from \cite[Theorem 6.1]{Nor22}.
 %Now the last sum is bounded by{prop:holoaddtwist}
Now by the lower bound from Corollary \ref{cor:Eisensteincontr} (applied with $\eps/2$) and assumption (\ref{eq:nsumbound2}), we conclude for $q$ sufficiently large that 
$$ \sum_{a\in H}\langle [\mathcal{C}^\psi_{a}(N)], \omega_E(N) \rangle_\mathrm{cap} \geq c_1(\eps/2)\frac{q^{1-\eps/2}}{N}- c_2(\eps/2)|H|^{3/4}q^{1/6+\eps/2}-\frac{q^{1-\eps}}{N}\gg_\eps \frac{q^{1-\eps/2}}{N},$$
using also $N\leq q^{1/18}$ and $\eps<1/100$ so that $|H|^{3/4}q^{1/6+\eps/2}\leq q^{11/12+1/200}\leq q^{1-1/100}q^{-1/18}\leq q^{1-\eps}/N$. Now by applying the effective Weyl's Criterion as in Lemma \ref{lem:WCvector} to the vectors  
$$ \frac{\sum_{a\in H}[\mathcal{C}^\psi_{a}(N)]}{\sum_{a\in H}\langle [\mathcal{C}^\psi_{a}(N)], \omega_E(N) \rangle_\mathrm{cap}},$$
we conclude the wanted bound (\ref{eq:thmlevelaspect1}).  
\end{proof}
%In the case $H=((\Z/q\Z)^\times)^2$ (assumed not to contain $-1$) we can apply the third moment bound for quadratic characters $\chi^\ast$ due to Petrow--Young:
%$$\sum_{f\in\BB_N}  L(1/2,f , \chi^\ast)^3\ll_\eps \cc(\chi)^{1+\eps}N^{1+\eps},$$
%using here also that $L(1/2,f , \chi^\ast)\geq 0$. Now the result follows by the exact same argument.\end{proof}
Now if we keep $N$ fixed we arrive at the following concentration result by Weyl's Criterion as in equation (\ref{eq:Weylfinalconclusion}) of Lemma \ref{lem:WCvector}. 
\begin{cor}\label{cor:mainhomo} Fix a prime $N\geq 3$ and $\eps>0$. For each positive integer $q\geq 2$ with $N|q$, consider a subgroup $H\leq (\Z/q\Z)^\times$ and a double coset embedding $\psi$ of level $q$ such that 
\begin{enumerate}
%\item $|H|\geq q^{7/8+\eps}$,
\item $-1\notin H$,
\item $\{a\in (\Z/q\Z)^\times: a\equiv 1\modulo q/N \}\not\subset H$,
\item \label{eq:nsumbound}$\left|\sum_{a\in H}n_\psi(a)\right| \leq q^{1-\eps}$.
\end{enumerate}
Then we have as $q\rightarrow \infty$ that 
$$\overline{\sum_{a\in H} [\mathcal{C}^\psi_{a}(N)]}\rightarrow \overline{v_E(N)},$$
in $\mathbf{S}(V_N)$.
\end{cor} 
Finally let us remark that condition (\ref{eq:nsumbound2}) in the corollary above is implied by condition (\hyperref[item:3]{3}) in the introduction, saying that the absolute trace is minimal: to see this note that in view of Lemma \ref{lem:lowerbndtrace} the trace being minimal corresponds to $n_\psi(a)=0$ for all $a\in H$ with $a+\overline{a}\not\in \{0,\pm 1,\pm 2\}\modulo q$ and $n_\psi(a)\in \{\pm1\}$ otherwise and we have for any $\eps>0$ the bound
\begin{align}
\nonumber &|\{a\in (\Z/q\Z)^\times :a+\overline{a}\in \{0,\pm 1,\pm 2\}\modulo q\}|\\
\label{eq:stupidstuff}=&|\{a\in (\Z/q\Z)^\times : \exists k\in \{0,\pm 1,\pm 2\}\text{ s.t. }q|(a^2+ka+1)\}| \ll_\eps q^{1/2+\eps},\quad \text{as }q\rightarrow \infty. 
\end{align} 
In conclusion Corollary \ref{cor:mainhomo} implies Theorem \ref{thm:second}. 
\begin{remark}
The natural bases of $H_1(Y_0(N),\R)$ constructed above are analogues of the set of isomorphism classes of supersingular elliptic curves $\mathcal{E\ell\ell}^{ss}(\mathbb{F}_{p^2})$ defined over $\F_{p^2}$ considered in the context of supersingular reduction of CM-elliptic curves. For details on this analogy see \cite[Section 2.1]{Nor22}. 
\end{remark}
\subsubsection{The case $-1\in H$}
In this section we will obtain results in the complementary case where the subgroup $H$ contains $-1$. In this case we will have to require the traces of the double coset embeddings to be biased.  
\begin{thm}\label{thm:mainH-1} Let $N\geq 2$ be prime and $\delta>1/6$ and let $|\!|\cdot|\!|_N$ be a norm on $H_1(Y_0(N),\R)$ defined above. For $q\geq 2$ such that $N|q$, let $H\subset (\Z/q\Z)^\times$ be a subgroup containing $-1$ and let $\psi$ be a double coset embedding of level $q$ such that 
\begin{equation}\label{eq:nsumbound3}\sum_{a\in cH}n_\psi(a)\geq M\geq |H|^{3/4} q^\delta.\end{equation}
Then we have for each $\eps>0$ that 
\begin{equation}\label{eq:thmlevelaspect}  \left|\! \left|\frac{\sum_{a\in H} [\mathcal{C}^\psi_{a}(N)]}{|\!|\sum_{a\in H}  [\mathcal{C}^\psi_{a}(N)]|\!|_N}-\frac{v_E(N)}{|\!|v_E(N)|\!|_N}\right|\! \right|_N \ll_{\eps}\frac{N^{5/4}q^{7/8+\eps}+N^{7/4} q^{3/4+\eps}}{M}. \end{equation}
%If $H=((\Z/q\Z)^\times)^2$ and $-1$ is a quadratic residue mod $q$, then we have the improved upper bound $N q^{5/6+\eps}/M$. 
\end{thm} 
\begin{proof}
The proof follows from Corollary \ref{cor:holoWeyl} exactly as above using now Corollary \ref{cor:-1inH} and the assumption (\ref{eq:nsumbound3}) to lower bound the contribution from the Eisenstein class $\omega_E(N)$.
\end{proof}
Again we conclude the following by Weyl's Criterion as in Lemma \ref{lem:WCvector}. 
\begin{cor}\label{cor:-1} Fix a prime $N\geq 2$ and $\eps>0$. For each $q\geq 2$ with $N|q$ consider a double coset embedding $\psi$ of level $q$ and a coset $cH\subset (\Z/q\Z)^\times$ of a subgroup $H$ containing $-1$ such that
$$\sum_{a\in cH} n_\psi(a)\geq |H|^{3/4}q^{1/6+\eps}+q^{7/8+\eps}.$$
Then we have as $q\rightarrow \infty$ that 
$$\overline{\sum_{a\in cH} [\mathcal{C}^\psi_{a}(N)]}\rightarrow \overline{v_E(N)},$$
in $\mathbf{S}(V_N)$.
\end{cor}

\subsubsection{An application to group theory}
Finally we will give a group theoretic application of Theorems \ref{thm:mainhomo} and \ref{thm:mainH-1}. For this part we will assume that $N$ is prime and $N\equiv -1\modulo 12$. Recall from the introduction that this means that $\Gamma_0(N)$ is torsion-free \cite[eq. (2.12),(2.13)]{Iw} and thus $\Gamma_0(N)$ is a free group on $2g+1$ generators where $g=\tfrac{N}{12}+O(1)$. In this case Doan--Kim--Lang--Tan \cite{DKLPT22} have recently constructed a subset of the generators (\ref{eq:S2}) containing the matrix $\begin{psmallmatrix} 1& 1\\0&1 \end{psmallmatrix}$ which defines a set of free generators of $\Gamma_0(N)$. We will be interested in the representation of matrices in this free basis which in general is a very hard question.    
\begin{proof}[Proof of Corollary \ref{cor:groupintro}]
Assume for now that $q$ is square-free. Let $\psi$ be the double coset embedding of level $q$ defined by
\begin{equation}\label{eq:doublecosetexample}\psi(a)=\begin{psmallmatrix} a & -\frac{aa^\ast+1}{q}\\q&-a^\ast \end{psmallmatrix}, \end{equation}
for $0< a,a^\ast<q$ s.t. $aa^\ast\equiv -1\modulo q$. Notice that the above matrices are indeed hyperbolic by the assumptions: clearly $\psi(a)\in \Gamma_0(N)$ which is torsion-free by the assumption that $N\equiv -1\modulo 12$ and so the only way hyperbolicity could fail is that the trace is $\pm 2$ which implies that $a(\pm 2-a)\equiv 1\mod q \Leftrightarrow q|(a\mp 1)^2$ and so $a\in \{1,q-1\}$ since $q$ is assumed square-free. In this last case the matrix is explicitly checked to be hyperbolic by the assumption $q\geq N\geq 11$. Recall that by (\ref{eq:cor2}) of Corollary \ref{cor:ta} we have
\begin{equation}\label{eq:sumstupid}\sum_{a\in H} \frac{\tr(\psi(a))}{q}= \sum_{a\in H}n_\psi(a)+\sum_{a\in H} \frac{t_a}{q}=\sum_{a\in H}n_\psi(a)+O_\eps(|H|^{3/4}q^{1/6+\eps}).\end{equation}
On the other hand, by (\ref{eq:cor3}) of Corollary \ref{cor:ta} we conclude that
\begin{equation}\label{eq:sumstupid2}\sum_{a\in H} \frac{\tr(\psi(a))}{q}=\left(\sum_{\substack{0<a<q:\\ (a\modulo q)\in H}} \frac{a}{q}\right)-\left(\sum_{\substack{0<a<q:\\ (a\modulo q)\in -H}} \frac{a}{q}\right)=  |H|(\tfrac{1}{2}-\tfrac{1}{2})+O_\eps(|H|^{2/3}q^{1/6+\eps}).\end{equation}
This means that 
$$\sum_{a\in H}n_\psi(a)\ll_\eps |H|^{2/3}q^{1/6+\eps}+|H|^{3/4}q^{1/6+\eps}\ll_\eps q^{11/12+\eps}, $$
and since $N\ll_\delta q^{1/\delta}$ with $\delta>18$, condition (\ref{eq:nsumbound2}) of Theorem \ref{thm:mainhomo} is satisfied for $q$ large enough and $\eps>0$ small enough. If we assume that a conjugate of the matrix (\ref{eq:specificmatrices2}) is in fact contained in the subgroup generated by $\sigma_1,\ldots, \sigma_{2g}$ then we get
$$ \left|\! \left|\frac{\sum_{a\in H} [\mathcal{C}^\psi_{a}(N)]}{|\!|\sum_{a\in H}  [\mathcal{C}^\psi_{a}(N)]|\!|_{B^\ast}}-\frac{v_E(N)}{|\!|v_E(N)|\!|_{B^\ast}}\right|\! \right|_{B^\ast}\geq 1, $$
where $B^\ast$ is the dual basis of the basis of $H_1(Y_0(N),\R)$ corresponding to $\left\{\begin{psmallmatrix} 1& 1\\0&1 \end{psmallmatrix}, \sigma_1,\ldots, \sigma_{2g}\right\}$. This contradicts the bound (\ref{eq:thmlevelaspect1}) of Theorem \ref{thm:mainhomo} by the assumption that $q\geq c(\delta) N^{\delta}$ with $\delta>18$. If $q$ is not square-free the matrix (\ref{eq:doublecosetexample}) is not hyperbolic whenever $a\neq \pm 1$ and $q|(a\pm 1)^2$. In these cases we multiply the matrix (\ref{eq:doublecosetexample}) by $\begin{psmallmatrix} 1& 1\\0&1 \end{psmallmatrix}$ from the right to obtain a hyperbolic matrix and consider this double coset embedding instead. This has a negligible effect on the sums (\ref{eq:sumstupid}) and (\ref{eq:sumstupid2}) by the estimate (\ref{eq:stupidstuff}) and so we conclude in this case as well. 
\end{proof}
Similarly we get the following corollary in the case where $-1\in H$ in which we need to consider matrices associated to $a\in H$ with \emph{positive} traces (meaning that the associated double coset embeddings are biased) so that Theorem \ref{thm:mainH-1} applies.
\begin{cor}\label{cor:group-1}
Let $N\equiv -1\modulo 12$ be prime and let 
$$\left\{\begin{psmallmatrix} 1& 1\\0&1 \end{psmallmatrix}, \sigma_1,\ldots, \sigma_{2g}\right\},$$ 
be a subset of (\ref{eq:S}) which defines a set of free generators of $\Gamma_0(N)$. Then for each $\delta>7/8$ there exists a constant $c(\delta)>0$ such that the following holds: Let $q\geq N^4$ be an integer such that $N|q$ and let  $0<a_1,\ldots, a_m<q$ with $m\geq c(\delta) N^{5/4} q^{\delta}$ be  pairwise distinct integers coprime to $q$ such that 
$$ \{a_i \modulo q: 1\leq i\leq m\}\subset (\Z/q\Z)^\times,$$
is a coset of a subgroup containing $-1$. For $1\leq i\leq m$, let $0<\overline{a_i}<q$ be defined by $q|a_i\overline{a_i}-1$. Then no conjugate of  
\begin{equation}\label{eq:specificmatrices}
%\prod_{i=1}^{m}\begin{psmallmatrix} a_i & \frac{a_i\overline{a_i}-1}{q}\\q&\overline{a_i} \end{psmallmatrix}=
\begin{psmallmatrix} a_1 & \frac{a_1\overline{a_1}-1}{q}\\q&\overline{a_1} \end{psmallmatrix}\begin{psmallmatrix} a_2 & \frac{a_2\overline{a_2}-1}{q}\\q&\overline{a_2} \end{psmallmatrix}\cdots \begin{psmallmatrix} a_{m} & \frac{a_{m}\overline{a_m}-1}{q}\\q&\overline{a_m} \end{psmallmatrix}\in \Gamma_0(N),\end{equation}
is contained in the subgroup of $\Gamma_0(N)$ generated by the matrices $\sigma_1,\ldots, \sigma_{2g}$.

\end{cor}

\begin{proof} Let 
$$\{a_i \modulo q: 1\leq i\leq m\}=cH,$$
with $H\leq (\Z/q\Z)^\times$ a subgroup and $c\in (\Z/q\Z)^\times$, and define the double coset embedding
$$\psi(a)=\begin{psmallmatrix} a & \frac{a\overline{a}-1}{q}\\q&\overline{a}\end{psmallmatrix}, 1<a< q-1,\qquad \psi(\pm 1)=\begin{psmallmatrix} \pm 1 & 1\\q&\pm (q+1) \end{psmallmatrix}.$$
Note that indeed the matrices above are hyperbolic. By equation (\ref{eq:cor3}) of Corollary \ref{cor:ta} we have
$$\sum_{a\in cH} \frac{\tr(\psi(a))}{q}=\left(\sum_{\substack{0<a<q:\\ (a\modulo q)\in cH}} \frac{a}{q}\right)+\left(\sum_{\substack{0<a<q:\\ (a\modulo q)\in \overline{c} H}} \frac{a}{q}\right) =|H|(\tfrac{1}{2}+\tfrac{1}{2})+O_\eps(|H|^{2/3}q^{1/6+\eps}),$$
which means in view of (\ref{eq:sumstupid})  that 
\begin{align*}\sum_{a\in cH}n_\psi(a)= |H|(1+O_\eps(|H|^{-1/3}q^{1/6+\eps}+|H|^{-1/4}q^{1/6+\eps}))&= |H|(1+O_\eps(q^{1/6-7/32+\eps}))
\\ &=|H|(1+o(1)),\end{align*}
 by the assumption on $H$. This shows that condition (\ref{eq:nsumbound3}) of Theorem \ref{thm:mainH-1} is satisfied, using here again that $|H|\gg q^{7/8}$ so that $|H|\gg |H|^{3/4}q^{1/6+\eps}$ for $\eps$ sufficiently small. Now the result follows exactly as above using Theorem \ref{thm:mainH-1} in place of Theorem \ref{thm:mainhomo}, where the right-hand side of (\ref{eq:thmlevelaspect}) tends to zero since $\sum_{a\in cH}n_\psi(a)\gg |H|\gg_\delta N^{5/4}q^{\delta} $ and $q\geq N^4$.
\end{proof}

\bibliography{mybib.bib}
\bibliographystyle{alpha}
\end{document}